\theoremstyle{theorem}
\newtheorem*{thm*}{Theorem}
\newtheorem{thm}{Theorem}[section]
\newtheorem{prop}[thm]{Proposition}
\newtheorem*{prop*}{Proposition}
\newtheorem*{lem*}{Lemma}
\newtheorem{lem}[thm]{Lemma}
\newtheorem*{cor*}{Corollary}
\newtheorem{cor}[thm]{Corollary}
\newtheorem*{prob*}{Problem}
\newtheorem{coro}[thm]{Corollary}
\newtheorem*{coro*}{Corollary}
\newtheorem{definition}[thm]{Definition}
\newtheorem*{definition*}{Definition}
\newtheorem*{remark*}{Remark}
\newtheorem{remark}[thm]{Remark}
\newtheorem*{example*}{Example}
\newtheorem{example}[thm]{Example}
\newtheorem{ques}[thm]{Question}
\newtheorem*{thma*}{Theorem A}
\newtheorem*{thmb*}{Theorem B}
\newtheorem*{thmc*}{Theorem C}
\newtheorem*{thmd*}{Theorem D}
\newtheorem*{qa*}{Question A}
\newtheorem*{qb*}{Question B}
\newtheorem*{qc*}{Question C}
\numberwithin{equation}{section}
\newcommand*{\e}{\ensuremath{\epsilon}}
\newcommand*{\Prim}{\ensuremath{\text{\upshape P}}}
\newcommand*{\GKdim}{\ensuremath{\text{\upshape GKdim}}}
\newcommand*{\Hom}{\ensuremath{\text{\upshape Hom}}}
\newcommand*{\gr}{\ensuremath{\text{\upshape gr}}}
\newcommand*{\HL}{\ensuremath{\text{\upshape H}}}
\newcommand*{\Ker}{\ensuremath{\text{\upshape Ker}}}
\newcommand*{\ad}{\ensuremath{\text{\upshape ad}}}
\newcommand*{\End}{\ensuremath{\text{\upshape End}}}
\newcommand*{\Img}{\ensuremath{\text{\upshape Im}}}
\newcommand*{\Id}{\ensuremath{\text{\upshape Id}}}
\newcommand*{\GL}{\ensuremath{\text{\upshape GL}}}
\newcommand*{\PGL}{\ensuremath{\text{\upshape PGL}}}
\newcommand*{\Der}{\ensuremath{\text{\upshape Der}}}
\newcommand*{\Aut}{\ensuremath{\text{\upshape Aut}}}
\newcommand*{\cocy}{\ensuremath{\text{\upshape Z}}}
\newcommand*{\cobo}{\ensuremath{\text{\upshape B}}}
\newcommand*{\EL}{\ensuremath{\mathcal E}}
\newcommand*{\LL}{\ensuremath{\mathcal L}}
\newcommand*{\HG}{\ensuremath{\mathcal H}}
\newcommand*{\Opext}{\ensuremath{\text{\upshape Opext}}}
\newcommand*{\Bock}{\boldsymbol{\omega}}
\newcommand*{\ZO}{\boldsymbol{\Phi}}
\newcommand*{\Cobar}{\ensuremath{\Omega}}
\newcommand*{\diff}{\ensuremath{d}}
\newcommand*{\RT}{\ensuremath{\mathcal D}}
\newcommand*{\PM}{\ensuremath{\mathcal P}}
\newcommand*{\ID}{\ensuremath{x}}  
\newcommand*{\CH}{\ensuremath{\mathscr X}}   
\newcommand*{\CR}{\ensuremath{\mathscr T}}  
\newcommand*{\AMP}{\ensuremath{\mathscr G}}  
\newcommand*{\pfield}{\ensuremath{\mathbb K}}  
\begin{document}

\title{Isomorphism classes of finite dimensional connected Hopf algebras in positive characteristic}

\author{Xingting Wang}
\address{
Department of Mathematics\\ University of California, San Diego\\ La Jolla, CA, 92093}
\email{xiw199@ucsd.edu}

\keywords{connected Hopf algebras, classification, positive characteristic, cleft extensions}

\subjclass[2010]{16T05}

\begin{abstract}
We classify all finite-dimensional connected Hopf algebras with large abelian primitive spaces.
We show that they are Hopf algebra extensions of restricted enveloping algebras of certain restricted Lie algebras.
For any abelian matched pair associated with these extensions, we construct a cohomology group, which classifies all the extensions up to equivalence.
Moreover, we present a $1$-$1$ correspondence between the isomorphism classes and a group quotient of the cohomology group deleting some exceptional points, 
where the group respects the automorphisms of the abelian matched pair and the exceptional points represent those restricted Lie algebra extensions.
\end{abstract}

\maketitle

\tableofcontents
\setcounter{section}{-1}

\section{Introduction}
Hopf algebras originated naturally in algebraic topology relating to the concept of $H$-space. Recently, the study of Hopf algebras has become increasingly importance. For instance, quasitriangular Hopf algebras, introduced by Drinfeld \cite{Dr}, supply solutions to the quantum Yang-Baxter equations arising from quantum field theory. The representations of any Hopf algebra form a tensor category, which imparts quantum invariants for knots, links and 3-manifolds \cite{Tu}.

The classification problem plays a central role in understanding the structure of Hopf algebras. During the last two decades, great progress has been made towards this direction by imposing certain hypotheses on the Hopf algebra itself, i.e., semisimplicity, pointedness of abelian group-like elements, triangular Hopf algebras, etc. See the surveys \cite{An,Bea,BG1,BG2} on finite-dimensional Hopf algebras. Nonetheless, almost all the results are achieved over an algebraically closed field of characteristic zero. On the contrary, in positive characteristic $p$, even $p$-dimensional Hopf algebras are not yet classified, while any Hopf algebra of prime dimension over an algebraically closed field of characteristic 0 is a group algebra \cite{Zhu}.

In this paper, we will employ the following notation. Let $k$ be a base field of characteristic $p>0$ (additionally, $k$ is assumed to be perfect in Section \ref{S:Realization}, and $k$ is algebraically closed in Section \ref{S:last}), and reserve $\pfield$ for the finite field with $p$ elements. We use the standard notation $(H, m, u, \Delta, \e, S)$ as in \cite{MO93} to denote a Hopf algebra. The \emph{primitive space} of $H$ is $\Prim(H):=\{x\in H|\Delta(x)=x\otimes 1+1\otimes x\}$. In characteristic $p>0$, $\Prim(H)$ is a restricted Lie algebra, where the Lie bracket and restricted map are given by the commutator and $p$-th power in $H$. The \emph{restricted universal enveloping algebra} of $\Prim(H)$ is denoted by $u(\Prim(H))$ \cite[Chapter V, \S 7]{JA79}.
The \emph{augmentation ideal} of $H$ is denoted by $H^+:=\ker\e$. When $H$ is finite-dimensional, we denote by $H^*$ the \emph{dual Hopf algebra} of $H$. We write $H^n$ for the $n$-fold tensor product $H^{\otimes n}$ and $1$ for the identity map on $H$. We are interested in the classification of finite-dimensional connected Hopf algebras in positive characteristic. 
\begin{definition}
The \emph{coradical} $H_0$ of $H$ is the sum of all simple subcoalgebras of $H$. We say that $H$ is \emph{connected} if $H_0$ is one-dimensional. 
\end{definition}
In the literature, connected Hopf algebras are often called by different names, such as irreducible or co-connected Hopf algebras. The first well-known classification result for connected Hopf algebras is due to Milnor, Moore, Cartier and Kostant around 1963. It says that any cocommutative connected Hopf algebra over the complex numbers is a universal enveloping algebra of a Lie algebra. Recently, the non-cocommutative version has been extended up to GK-dimension 4; see \cite{BZ1, BZ2, Zh1, WZZ2}. Note that finite-dimensional (or GK-dimension 0) connected Hopf algebras only appear in positive characteristic, and they all have dimension $p^n$ for some $n\ge 0$ \cite[Proposition 1.1(1)]{sscHa}. Graded, cocommutative, connected Hopf algebras of dimension $p^2$ and $p^3$ are classified by Henderson \cite{Hen} using Singer's theory \cite{Singer} of extensions of connected Hopf algebras. The non-graded, non-cocommutative version has been studied lately in \cite{Wan, LinXT2} using the theories of restricted Lie algebras and Hochschild cohomology of coalgebras. So far, a complete classification has been obtained by explicit generators and relations for connected Hopf algebras of dimension $\le p^3$ except for the following case in dimension $p^3$.
\begin{definition}
Let $d$ be a positive integer and let $\CH(d)$ be the set consisting of all finite dimensional connected Hopf algebras $H$ satisfying the following two conditions:
\begin{itemize}
\item[(a)] $\dim H=p^{d+1}$,
\item[(b)] $\Prim(H)$ is an abelian restricted Lie algebra of dimension $d$.
\end{itemize}
\end{definition}
One predicament of classifying $\CH:=\bigcup \CH(d)$ is that it contains numerous parametric families; see for example the following parametric family in $\CH(2)$:
\begin{example}\label{C0:Exp}
Let $A(\lambda)$ be the quotient algebra of the free algebra $k\langle x,y,z\rangle$, subject to the relations:
\begin{align*}
x^p=0,\ y^p=y,\ z^p+x^{p-1}y=x,\ [x,y]=0,\ [z,x]=y,[z,y]=0.
\end{align*}
Regarding the coalgebra structure, $x$ and $y$ are primitive and the comultiplication of $z$ is given by
\begin{align*}
\Delta(z)=z\otimes 1+1\otimes z+\lambda x\otimes y+\sum_{1\le i\le p-1}{p\choose i}/p\ x^i\otimes x^{p-i}.
\end{align*}
The isomorphism classes in $A(\lambda)$ regarding the parameter $\lambda$ are more subtle. Suppose $k$ is algebraically closed. Let $G$ be the multiplicative group $\sqrt[\frac{p^2-1}{2}]{1}$ acting on $k$ by scalar multiplication. Indeed $A(\lambda)\cong A(\lambda')$ if and only if $\lambda,\lambda'$ are in the same $G$-orbits of $k$. 
\end{example}

The purpose of this paper is to describe all Hopf algebras in $\mathscr X$ and their isomorphism classes explicitly. We will show that every Hopf algebra $H$ in $\mathscr X$ fits into a \emph{cocentral} extension
\begin{align}\label{E:SES}
\xymatrix{
1\ar[r]&
A\ar[r]^-{\iota}&
H\ar[r]^-{\pi}&
B\ar[r]&
1
}
\end{align}
of finite-dimensional Hopf algebras, where $A, B$ are the restricted universal enveloping algebras of some abelian restricted Lie algebras, i.e., $A=u(\mathfrak h)$ with $\mathfrak h$ abelian, and $B=u(\mathfrak g)$ with $\dim \mathfrak g=1$. By saying that the extension is cocentral, we mean that the dual Hopf algebra $B^*$ is central in $H^*$. Let us generalize $B$ so that $B$ is generated by one non-zero primitive, say $z$. Then $B$ is necessarily of the form $B=k[z]/f(z)$, where
\[f(z):=z^{p^n}+\lambda_{n-1}z^{p^{n-1}}+\cdots +\lambda_1z^p+\lambda_0z=0,\ \text{for}\ n\ge 1.\]
The primitive space $\Prim(B)$ of $B$ is denoted by $\mathfrak g$. Note that $n=1$ is the previous special situation, in which case we write $f(z)=z^p+\lambda z$ for some $\lambda \in k$. We construct a Hopf algebra $u(\mathscr D)$ from the explicit data 
\begin{align}\label{Data}
\mathscr{D}=\big(T, z, \Theta, \chi\big),
\end{align}
where $T$ gives an action of $B$ on $A$, the element $\Theta$ is in the augmentation ideal $A^+$ of $A$ and $\chi$ is some $2$-cocycle in $A^+\otimes A^+$ regarding the cobar construction on $A$; see details in Section \ref{S:AGC}. Our first theorem, precisely stated below, proves that the Hopf algebra $u(\mathscr D)$ indeed fits into a cocentral extension as above (and every such extension is given by some $u(\mathscr D)$). Moreover, the theorem gives a necessary and sufficient condition for $\Prim(u(\mathscr D))$ to coincide with $\mathfrak h=\Prim(A)$; this plays an important role in our classification result.
\begin{thm}\label{C0:thmA}
The following hold for $u(\mathscr D)$:
\begin{itemize}
\item[(a)] $u(\mathscr D)$ is a connected Hopf algebra of dimension $p^{\dim \Prim(A)+n}$.
\item[(b)] $u(\mathscr D)$ is an extension of $A$ by $B$.
\item[(c)] the primitive space of $u(\mathscr D)$ is isomorphic to $\mathfrak h$ if and only if $\{[\RT_z^i(\chi)]|0\le i\le n-1\}$ are linearly independent in $\HL^2(\Cobar A)$. (See Definitions \ref{C1:Cobar} and \ref{C2:Def1}.)
\end{itemize}
\end{thm}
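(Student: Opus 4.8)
The plan is to realize $u(\mathscr{D})$ concretely as the algebra generated by $A$ together with a single element $z$, subject to the relation encoding the $B$-action (so that $za-az=T(a)$ for $a\in A$) and the ``restricted power'' relation $f(z)=\Theta$, equipped with the coalgebra structure determined by $\Delta|_{A}$ and $\Delta(z)=z\otimes 1+1\otimes z+\chi$, $\epsilon(z)=0$. For part (a) I would first run a filtered \PBW/diamond-lemma argument, with $A$ in degree $0$ and $z$ in degree $1$, to show that $\{az^{j}\}$, with $a$ ranging over a basis of $A$ and $0\le j\le p^{n}-1$, is a $k$-basis; this makes $u(\mathscr{D})$ free over $A$ and gives $\dim u(\mathscr{D})=\dim A\cdot p^{n}=p^{\dim\Prim(A)+n}$. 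Checking that $\Delta,\epsilon$ descend to the quotient is where the data enter: coassociativity of $\Delta(z)$ is exactly the cobar $2$-cocycle condition $\diff(\chi)=0$, compatibility with $za-az=T(a)$ uses that $T$ is at once a derivation and a coderivation of $A$, and compatibility with $f(z)=\Theta$ is the remaining structural identity built into $\mathscr{D}$. Connectedness is immediate since the above is a coalgebra filtration with one-dimensional degree-$0$ piece, and a finite-dimensional connected bialgebra is automatically a Hopf algebra.

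For part (b), the inclusion $\iota\colon A\hookrightarrow u(\mathscr{D})$ is a Hopf subalgebra by construction, and $\pi\colon u(\mathscr{D})\to B=k[z]/f(z)$, defined by $a\mapsto\epsilon(a)$ on $A$ and $z\mapsto z$, is a well-defined Hopf surjection: it annihilates $\chi\in A^{+}\otimes A^{+}$ and $\Theta\in A^{+}$, so $\pi(z)$ is primitive and satisfies $f(\pi(z))=0$. Freeness of $u(\mathscr{D})$ over $A$ from part (a), together with the dimension count, yields $A=u(\mathscr{D})^{\mathrm{co}\,B}$ and $\ker\pi=A^{+}u(\mathscr{D})$, so the sequence \eqref{E:SES} is exact; cocentrality holds because, again since $\chi\in A^{+}\otimes A^{+}$ and $\pi|_{A}=\epsilon$, the identity $\sum\pi(h_{(1)})\otimes h_{(2)}=\sum\pi(h_{(2)})\otimes h_{(1)}$ is readily verified on the generators $A$ and $z$.

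Part (c) is the crux, and I would argue cohomologically rather than by brute monomial bookkeeping. The key input is the evaluation of $\Delta(z^{p^{i}})=\Delta(z)^{p^{i}}=(z\otimes 1+1\otimes z+\chi)^{p^{i}}$ in characteristic $p$: applying the $p$-th power (Jacobson) identity and using $[z^{p^{j}}\otimes 1+1\otimes z^{p^{j}},\,-]=(T^{p^{j}}\otimes 1+1\otimes T^{p^{j}})(-)$ on $A\otimes A$, one obtains
\begin{align*}
\Delta\bigl(z^{p^{i}}\bigr)=z^{p^{i}}\otimes 1+1\otimes z^{p^{i}}+\xi_{i},\qquad \xi_{i}\in A^{+}\otimes A^{+},\qquad [\xi_{i}]=[\RT_{z}^{i}(\chi)]\ \text{in}\ \HL^{2}(\Cobar A).
\end{align*}
Since $\pi$ is a Hopf surjection it carries $\Prim(u(\mathscr{D}))$ into $\Prim(B)=\mathfrak g=\langle z^{p^{i}}:0\le i\le n-1\rangle$, and as $\mathfrak h=\Prim(A)=\Prim(u(\mathscr{D}))\cap A$ is the kernel of $\pi$ on primitives, this induces an injection $\Prim(u(\mathscr{D}))/\mathfrak h\hookrightarrow\mathfrak g$. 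Now fix a class $\sum_{i}c_{i}z^{p^{i}}\in\mathfrak g$ and let $w\in u(\mathscr{D})$ map to it; setting $r:=w-\sum_{i}c_{i}z^{p^{i}}$ and computing $(\Id\otimes\pi)\Delta(r)=r\otimes 1$ shows $r\in u(\mathscr{D})^{\mathrm{co}\,B}=A$, whence $w$ is primitive if and only if $\sum_{i}c_{i}\xi_{i}=\diff(r)$ with $r\in A^{+}$, i.e. $\sum_{i}c_{i}[\RT_{z}^{i}(\chi)]=0$ in $\HL^{2}(\Cobar A)$. Therefore $\Prim(u(\mathscr{D}))=\mathfrak h$ precisely when the only such tuple is $0$, which is exactly the asserted linear independence.

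The step I expect to be the main obstacle is the $p$-th power computation yielding $[\xi_{i}]=[\RT_{z}^{i}(\chi)]$: controlling the Jacobson Lie-polynomial correction terms modulo cobar coboundaries, and confirming that $\RT_{z}$ is a chain map on $\Cobar A$ so that iterating it is well defined on $\HL^{2}(\Cobar A)$, is the delicate point on which the entire cohomological reformulation rests.
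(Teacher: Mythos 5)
Your proposal is correct and follows essentially the same route as the paper: a diamond-lemma/PBW basis for freeness over $A$ and the dimension count, the identity $\Delta(x^{p^i})=x^{p^i}\otimes 1+1\otimes x^{p^i}+\RT_z^i(\chi)$ (the paper's Lemma \ref{AppOperator}, proved by the same Jacobson $p$-th power induction you flag as the delicate point) to show $f(x)+\Theta$ generates a bi-ideal, and the reduction of part (c) to the equation $\sum_i\mu_i[\RT_z^i(\chi)]=0$ in $\HL^2(\Cobar A)$. The only cosmetic difference is that you reach the decomposition of a primitive as $\sum_i c_i x^{p^i}+r$ with $r\in A$ via the coinvariants $u(\mathscr D)^{\mathrm{co}\,B}=A$, whereas the paper uses a direct PBW-basis lemma (Lemma \ref{pinA}); both are valid.
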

Theorem \ref{C0:thmA} will be proved in Section \ref{S:thmA}. Moreover, every Hopf algebra in $\CH$ is a $B$-extension over $A$ satisfying $\dim B=p$, hence is given by some data $\mathscr D$ (Corollary \ref{C4:ObjH}). Next, since $A$ is commutative and $B$ is cocommutative, every extension of $A$ by $B$ is naturally associated with an abelian matched pair $(\rightharpoonup, \varrho)$, which consists of an action and a coaction 
\[
\rightharpoonup: B\otimes A\to A,\  \varrho: B\to B\otimes A
\]
satisfying certain conditions \cite[Definition 2.2]{MK2}. For abelian matched pairs associated with $\CH$, the coaction is always trivial and the action is induced by some algebraic representation $\rho$ of $\mathfrak g$ on $\mathfrak h$ (Lemma \ref{C4:Apair}). Hence they can be described by $T=(\mathfrak g,\mathfrak h, \rho)$, where $T$ is called a \emph{type}. Now fix a type $T=(\mathfrak g,\mathfrak h,\rho)$. We form a cohomology group $\HG^2(B,A)$ consisting equivalence classes of $(\chi,\Theta)$ in the data $\mathscr D$. With the help of Theorem \ref{C0:thmA} (c), we further define a subgroup $\HG^2(\mathfrak g,\mathfrak h)$ representing primitively generated extensions. Then we have our second result.

\begin{thm}\label{C0:thmB}
There exist the following $1$-$1$ correspondences:
\begin{itemize}
\item[(a)] View $\mathfrak h$ as a (left) restricted $\mathfrak g$-module via $\rho$. 
\[\xymatrix{
\left\{\mbox{equiv-classes of restricted Lie algebra extensions of}\  \mathfrak h\ \mbox{by}\  \mathfrak g\right\}\ar@{<->}[r]  & \left\{\mbox{elements of}\ \HG^2(\mathfrak g,\mathfrak h)\right\}.}   
\]
\item[(b)] Suppose the abelian matched pair is given by $T$. 
\[\xymatrix{
\left\{\mbox{equiv-classes of Hopf algebra extensions of type}\ T\right\}\ar@{<->}[r] & \left\{\mbox{elements of}\ \HG^2(B,A)\right\}.}\]
\end{itemize}
\end{thm}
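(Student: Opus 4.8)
The plan is to establish both correspondences by constructing explicit, mutually inverse maps, treating (b) first and then obtaining (a) by restricting to the primitively generated locus. For (b), the forward map sends a class $[(\chi,\Theta)]\in\HG^2(B,A)$ to the equivalence class of the extension $A\hookrightarrow u(\mathscr{D})\twoheadrightarrow B$ supplied by Theorem \ref{C0:thmA}(a)--(b), where $\mathscr{D}=(T,z,\Theta,\chi)$; by Lemma \ref{C4:Apair} this extension is of type $T$. The backward map uses the realization result, Corollary \ref{C4:ObjH}: every extension of type $T$ is isomorphic to some $u(\mathscr{D})$, and we record the class $[(\chi,\Theta)]$ of its data. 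The entire content of the theorem is that these two assignments descend to the respective equivalence classes and are inverse to one another.

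First I would pin down the two sources of ambiguity. An equivalence of extensions is a Hopf-algebra isomorphism $\phi\colon u(\mathscr{D})\to u(\mathscr{D}')$ restricting to the identity on $A$ and inducing the identity on $B$; since $\pi'\phi=\pi$, such a $\phi$ must carry the distinguished generator $z$ to $z'+a$ for a unique $a\in A^+$, and is completely determined by this $a$. Dually, the only freedom in the data attached to a fixed extension is the replacement $z\mapsto z+a$ of the section, again with $a\in A^+$. Thus both the equivalence of extensions and the coboundary relation defining $\HG^2(B,A)$ are governed by the same parameter $a\in A^+$, and the theorem reduces to the single identity
\begin{align*}
(\chi',\Theta')=(\chi,\Theta)+\partial(a),
\end{align*}
where $\partial(a)$ is the $2$-coboundary produced by $a$. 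Well-definedness of the forward map and injectivity of the backward map are then the two readings of this one identity.

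To establish the identity I would compute directly how the substitution $z\mapsto z+a$ transforms the data. On the comultiplication side, $\Delta(z+a)=(z+a)\otimes 1+1\otimes(z+a)+\chi+\diff(a)$, so $\chi$ changes by the cobar differential $\diff(a)=\Delta(a)-a\otimes 1-1\otimes a$; this is the $\HL^2(\Cobar A)$-component of $\partial(a)$ and is routine. The delicate part is the $\Theta$-component: one must expand the defining relation $f(z+a)$ together with the $p$-th power $(z+a)^{p}$, controlling the mixed terms through the operator $\RT_z$ and the $p$-th-power comultiplication identity $\BockP{a}$, and verify that the resulting change in $\Theta$ is exactly the second slot of $\partial(a)$. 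This reconciliation between the $p$-power relation and the twisted comultiplication is where the cocycle conditions packaged into $\mathscr{D}$ are consumed, and it is the main obstacle of the proof: the two coordinates of the coboundary are not independent, and matching them is the only non-formal step. Once the identity holds, surjectivity of the backward map is Corollary \ref{C4:ObjH}, both composites are seen to be the respective identities, and one further checks that the bijection intertwines the Baer sum of extensions with addition in $\HG^2(B,A)$.

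For part (a) I would restrict the bijection of (b) to the subgroup $\HG^2(\mathfrak{g},\mathfrak{h})$. By its definition through Theorem \ref{C0:thmA}(c), a class lies in $\HG^2(\mathfrak{g},\mathfrak{h})$ precisely when every $[\RT_z^i(\chi)]$ vanishes in $\HL^2(\Cobar A)$, so that after subtracting a coboundary we may take $\chi=0$; then $z$ and its $p$-powers are primitive and $u(\mathscr{D})$ is primitively generated, namely $u(\mathscr{D})=u(\mathfrak{e})$ with $\mathfrak{e}=\Prim(u(\mathscr{D}))$ a restricted Lie algebra fitting into $0\to\mathfrak{h}\to\mathfrak{e}\to\mathfrak{g}\to 0$, its bracket given by $\rho$ and its $p$-operation encoded by $\Theta$. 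Conversely, every restricted Lie algebra extension $\mathfrak{e}$ of $\mathfrak{h}$ by $\mathfrak{g}$ yields, via $u(-)$, such a datum with $\chi=0$. Since $\Prim\circ u=\Id$ on restricted Lie algebras and $u(-)$ is fully faithful on this category, an equivalence of Hopf-algebra extensions between two such $u(\mathfrak{e})$ is the same as an equivalence of the underlying restricted Lie algebra extensions; hence the bijection of (b) restricts to the desired $1$-$1$ correspondence, recovering the classical Hochschild classification of restricted Lie algebra extensions by $\HG^2(\mathfrak{g},\mathfrak{h})$.
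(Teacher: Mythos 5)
Your proposal is correct and follows essentially the same route as the paper: the paper realizes every extension of type $T$ by some data $\mathscr D$ via Proposition \ref{C4:TExt} (this, rather than Corollary \ref{C4:ObjH}, is the citation you want for the backward map), and then obtains the matching of equivalence of extensions with the coboundary relation by specializing Lemma \ref{C4:IsomMap}(b) to $g=\Id$, which is exactly the identity $(\chi',\Theta')=(\chi,\Theta)+\partial(a)$ you propose to verify by hand; part (a) is likewise obtained by restricting to the primitively generated locus $[\chi]=0$ via Theorem \ref{C0:thmA}(c). The only difference is that you inline the change-of-section computation that the paper has already packaged into Lemma \ref{C4:IsomMap}.
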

As a matter of fact, the complement $\HG^2(B,A)\setminus\HG^2(\mathfrak g,\mathfrak h):=\HG^2(T)$ relates equivalent classes of $\CH$. We denote by $\Aut(T)$ the automorphism group of $T$ by considering the category of all types as a full subcategory of the category defined in \cite[\S 1]{Ho} for all abelian matched paris (Corollary \ref{C4:subcat}). Moreover, there is a natural group action of $\Aut(T)$ on $\HG^2(T)$ induced by the Hopf algebra isomorphisms described in Lemma \ref{C4:IsomMap}. Then we have our classification result.

\begin{thm}\label{C0:thmC}
For a fixed type $T$, there is a $1$-$1$ correspondence between isomorphism classes in $\CH$ and $\Aut(T)$-orbits in $\HG^2(T)$. Moreover, the isomorphism classes of $\CH$ are bijective to the disjoint union
\[
\coprod_{T}\HG^2(T)/\Aut(T),
\]
where $T$ runs through all non-isomorphic types.
\end{thm}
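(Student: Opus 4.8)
The plan is to combine the two bijections of Theorem~\ref{C0:thmB}(b) with the descent to isomorphism classes. The starting point is Corollary~\ref{C4:ObjH}, which tells us that every Hopf algebra in $\CH$ arises as $u(\mathscr D)$ for some data $\mathscr D=(T,z,\Theta,\chi)$ with $\dim B=p$, hence as a cocentral extension of type $T$. By Theorem~\ref{C0:thmB}(b), for a fixed type $T$ the equivalence classes of such extensions are in bijection with $\HG^2(B,A)$; and by Theorem~\ref{C0:thmA}(c) together with the definition of $\HG^2(\mathfrak g,\mathfrak h)$, the subgroup $\HG^2(\mathfrak g,\mathfrak h)$ consists exactly of the classes whose associated extension is \emph{not} primitively generated at the level $\Prim(u(\mathscr D))=\mathfrak h$ (these are the restricted Lie algebra extensions, classified by part (a)). Thus membership in the complement $\HG^2(T)=\HG^2(B,A)\setminus\HG^2(\mathfrak g,\mathfrak h)$ is precisely the condition that $\Prim(u(\mathscr D))$ has dimension $d$, i.e. that $u(\mathscr D)$ genuinely lies in $\CH(d)$ rather than collapsing to a restricted enveloping algebra. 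So the first step is to assemble these facts into the statement that, for fixed $T$, the equivalence classes of Hopf algebras in $\CH$ of type $T$ are in bijection with the points of $\HG^2(T)$.

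The second step is to pass from \emph{equivalence} of extensions to \emph{isomorphism} of Hopf algebras, and this is where the automorphism group $\Aut(T)$ enters. Two data $\mathscr D,\mathscr D'$ may fail to be equivalent as extensions yet yield isomorphic Hopf algebras, precisely when the isomorphism does not respect the fixed extension structure but permutes it according to an automorphism of the underlying matched pair. The key input here is Lemma~\ref{C4:IsomMap}, which describes the Hopf algebra isomorphisms $u(\mathscr D)\cong u(\mathscr D')$ explicitly, and Corollary~\ref{C4:subcat}, which realizes types as a full subcategory of the category of abelian matched pairs so that $\Aut(T)$ is well defined. I would argue that an isomorphism $u(\mathscr D)\cong u(\mathscr D')$ decomposes into an automorphism of the type $T$ followed by an equivalence of extensions: the induced maps on $A=u(\mathfrak h)$ and on $B=u(\mathfrak g)$ (the latter via cocentrality, using that $B^*$ is central in $H^*$) give an element of $\Aut(T)$, and modulo that automorphism the two classes in $\HG^2(T)$ coincide. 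Conversely, every element of $\Aut(T)$ produces such an isomorphism via the natural action on $\HG^2(T)$. This yields the claimed bijection between isomorphism classes of type $T$ and $\Aut(T)$-orbits in $\HG^2(T)$.

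The final step is to range over all types. Since the type $T=(\mathfrak g,\mathfrak h,\rho)$ is an invariant of the Hopf algebra $H\in\CH$ (it is recovered from $\mathfrak h=\Prim(H)$, from the one-dimensional quotient $\mathfrak g$, and from the induced $\rho$), isomorphic Hopf algebras have isomorphic types, so the isomorphism classes partition according to the isomorphism class of $T$. Choosing one representative $T$ from each isomorphism class and taking the disjoint union of the orbit spaces $\HG^2(T)/\Aut(T)$ then gives a bijection with all isomorphism classes in $\CH$. One must check that this is well defined, i.e. independent of the chosen representative of each type-isomorphism class; an isomorphism $T\cong T'$ of types induces a compatible bijection $\HG^2(T)\cong\HG^2(T')$ intertwining the $\Aut(T)$- and $\Aut(T')$-actions, so the orbit spaces are canonically identified.

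I expect the main obstacle to be the second step, namely cleanly separating an arbitrary Hopf algebra isomorphism into its ``type part'' and its ``extension part.'' The subtlety is that a general isomorphism $u(\mathscr D)\cong u(\mathscr D')$ need not preserve the subalgebra $A$ or the projection to $B$ on the nose; one must use cocentrality to pin down the image of $A$ and the induced map on $B$, and then verify that after correcting by the resulting automorphism of $T$ the remaining freedom is exactly an extension equivalence, so that nothing is double-counted. Handling the ``exceptional points'' correctly is also delicate: one must ensure that deleting $\HG^2(\mathfrak g,\mathfrak h)$ is compatible with the $\Aut(T)$-action, i.e. that $\Aut(T)$ preserves the complement $\HG^2(T)$, which follows because the action sends primitively generated extensions to primitively generated ones and hence restricts to the complement.
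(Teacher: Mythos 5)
Your proposal is correct and follows essentially the same route as the paper: Corollary \ref{C4:ObjH} to realize every $H\in\CH$ as an extension of some type, Theorem \ref{C0:thmB}(b) together with Lemma \ref{C4:IsomMap} for general $g\in\Aut(T)$ to pass from equivalence classes to isomorphism classes, and invariance of the type under isomorphism to justify the disjoint union over non-isomorphic types. The only cosmetic difference is that the paper pins down $F(A)=A$ (and hence the induced map on $B$) simply because Hopf algebra isomorphisms preserve primitive elements and $A=u(\Prim(H))$ for $H\in\CH$, rather than appealing to cocentrality.
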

Theorem \ref {C0:thmB} and Theorem \ref{C0:thmC} will be proved in Section \ref{S:Grp}. For the sake of completeness, it is verified that our cohomology group $\HG^2(B,A)$ is isomorphic to the one defined in \cite{Ho, Ma1} for computing Hopf algebra extensions associated with abelian matched pairs (Proposition \ref{C5:IsomGrp}). 

Finally, we study the $\Aut(T)$-quotient of $\HG^2(T)$ inspired by finite group quotients of affine or projective spaces. Identify $\mathfrak h=\mathbb A^d$ and $\HL^2(\Cobar A)=\mathbb A^{d(d+1)/2}$ for some $d>0$. We embed $\HG^2(T)$ into a subquotient of $\mathbb A^d\times \mathbb A^{d(d+1)/2}$ (Proposition \ref{P:Embed}). Moreover, there is a projection from $\HG^2(T)$ to $\mathbb A^{d(d+1)/2}$, whose images are called admissible elements (Definition \ref{D:Ad}). We expect that admissible elements will play a central role in understanding the structure of $\HG^2(T)$. Note that all admissible elements are $z$-characteristic (Definition \ref{D:Zco}). Under certain circumstances, they do coincide (Proposition \ref{P:TAd}), and there exists a bijection between $\HG^2(T)$ and nonzero $z$-characteristic elements in $\mathbb A^{d(d+1)/2}$ (Lemma \ref{L:Fiber}). Therefore, we develop formulas to calculate $z$-characteristic elements (Proposition \ref{C7:zchar}) as well as $\Aut(T)$-actions in $\mathbb A^{d(d+1)/2}$. As an application, all the semisimple Hopf algebras in $\CH$ are classified assuming $k$ is algebraically closed. Let $\Lambda(V)$ be the exterior algebra over any vector space $V$. A quadratic curve in $\Lambda(V)$ is defined to be a one-dimensional subspace in $V\oplus (V\wedge V)\subseteq \Lambda(V)$. We prove the following result in our last section.

\begin{thm}\label{C0:thmD}
The following isomorphism classes are in $1$-$1$ correspondence with each other.
\begin{itemize}
\item[(a)] The isomorphism classes of semisimple connected Hopf algebras of dimension $p^{d+1}$ with $\dim \Prim(H)=d$.
\item[(b)] The isomorphism classes of quadratic curves in $\mathbb P_\pfield^{d-1}$ ($p=2$) or $\Lambda(V)$ with $\dim V=d$ ($p>2$) up to the affine automorphism group $\PGL(d,\pfield)$. 
\item[(c)] The isomorphism classes of $p$-groups of order $p^{d+1}$, whose Frattini group is isomorphic to $C_p$. (The Frattini group of a $p$-group $G$ is the smallest normal subgroup $N$ such that $G/N$ is an elementary abelian $p$-group.)
\end{itemize}
\end{thm}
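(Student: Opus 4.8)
\emph{Overall strategy.} The plan is to feed the global classification of Theorem~\ref{C0:thmC} through the duality $H\mapsto H^*$ and to isolate the semisimple locus inside $\coprod_T\HG^2(T)/\Aut(T)$. The first step is to show that a semisimple $H\in\CH$ is automatically commutative and is forced to sit over a single type. Since $B=H/HA^+$ is a quotient Hopf algebra and $A$ is a normal Hopf subalgebra (Corollary~\ref{C4:ObjH}), semisimplicity of $H$ descends to both $A$ and $B$: quotients of semisimple algebras are semisimple, and a Hopf subalgebra of a semisimple Hopf algebra is semisimple by Nichols--Zoeller freeness. A restricted enveloping algebra $u(\mathfrak l)$ of an abelian restricted Lie algebra is semisimple exactly when its $p$-operation is bijective, i.e. $\mathfrak l$ is a torus; over the algebraically closed field $k$ this pins down $A=u(\mathfrak h)$ and $B=u(\mathfrak g)$ up to isomorphism.

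\emph{From semisimplicity to $k^G$.} The key observation, which makes commutativity painless, is that the torus condition kills the action. By Lemma~\ref{C4:Apair} the coaction is trivial and the $B$-action on $A$ is induced by $\rho\colon\mathfrak g\to\Der(A)$, so each $z\in\mathfrak g$ acts as an algebra derivation $D_z$ with $D_z|_{\mathfrak h}=\rho(z)$. For any $x\in\mathfrak h$ one has $x^p=x^{[p]}$ inside $A$, hence $\rho(z)\big(x^{[p]}\big)=D_z(x^p)=p\,x^{p-1}D_z(x)=0$; as $[p]\colon\mathfrak h\to\mathfrak h$ is bijective this gives $\rho(z)=0$. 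Thus the only surviving type is the torus type $T_0=(\mathfrak g,\mathfrak h,0)$ with trivial action, $A$ is central, and $H$ is commutative. Dualizing the cocentral sequence \eqref{E:SES} then yields $H^*\cong kG$ with $G$ a central extension $1\to\mathbb Z/p\to G\to(\mathbb Z/p)^d\to1$, so $H\cong k^G$ for a $p$-group $G$ of order $p^{d+1}$, and $H\cong k^{G'}$ iff $G\cong G'$.

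\emph{Matching (a) with (c).} For $H=k^G$ the standard duality gives $\Prim(H)\cong\big(I/I^2\big)^*$ with $I=(kG)^+$, and by the Burnside basis theorem $\dim_k I/I^2=\dim_{\pfield}G/\Phi(G)$. Hence the constraint $\dim\Prim(H)=d$ with $\dim H=p^{d+1}$ is equivalent to $|G/\Phi(G)|=p^{d}$, i.e. $|\Phi(G)|=p$, which is exactly $\Phi(G)\cong C_p$. This identifies (a) with (c).

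\emph{Matching (a) with (b), and the main obstacle.} Specializing Theorem~\ref{C0:thmC} to the single type $T_0$ reduces the isomorphism classes to $\HG^2(T_0)/\Aut(T_0)$. Here Theorem~\ref{C0:thmA}(c) guarantees that the relevant classes lie in the complement $\HG^2(T)=\HG^2(B,A)\setminus\HG^2(\mathfrak g,\mathfrak h)$ (the genuinely Hopf, non-restricted-Lie extensions, matching $\Phi(G)$ being exactly $C_p$), and Lemma~\ref{L:Fiber} together with Proposition~\ref{P:TAd} identifies $\HG^2(T_0)$ with the nonzero $z$-characteristic elements of $\HL^2(\Cobar A)=\mathbb A^{d(d+1)/2}$ (Definitions~\ref{D:Ad} and \ref{D:Zco}, with the formulas of Proposition~\ref{C7:zchar}). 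Under the identification $\HL^2(\Cobar A)\cong V\oplus(V\wedge V)$ for $p>2$, respectively the space of quadratic forms on $\pfield^d$ for $p=2$, these projective points are precisely the quadratic curves in $\Lambda(V)$ (resp. the quadrics in $\mathbb P^{d-1}_{\pfield}$), and the residual symmetry $\Aut(T_0)$ acts as $\PGL(d,\pfield)$; comparison with the classical computation of $H^2\big((\mathbb Z/p)^d,\mathbb Z/p\big)$ classifying the central extensions $G$ closes the triangle. I expect this last identification to be the main obstacle: the commutativity reduction and the Frattini bookkeeping are formal, but carefully repackaging the $z$-characteristic elements of $\HG^2(T_0)$ as quadratic curves, matching $\Aut(T_0)$ with the projective change-of-coordinate action, and reconciling this with group cohomology must be carried out separately for $p=2$ and $p>2$, since the shape of $\HL^2(\Cobar A)$ and hence the ambient geometry genuinely differs in the two characteristics.
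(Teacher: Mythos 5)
Your proposal follows essentially the same route as the paper's proof in Section \ref{S:last}: reduce to the semisimple types $T_d=(\mathfrak g,\mathfrak h,0)$ with both restricted Lie algebras tori (the paper gets commutativity and the torus condition directly from the semisimplicity criterion quoted at the start of that section, while you rederive $\rho=0$ from Definition \ref{D:AR}(d) and the bijectivity of the $p$-operation --- both work), then identify $\HG^2(T_d)$ with the nonzero $z$-characteristic elements of $\HL^2(\Cobar A)$ via Proposition \ref{P:HT=Co} (i.e.\ Proposition \ref{P:TAd} combined with Lemma \ref{L:Fiber}), and finally read off the $\Aut(T_d)=\pfield^\times\times\GL(d,\pfield)$ action as the affine $\PGL(d,\pfield)$-action on quadratic curves. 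Two remarks. For (a)$\Leftrightarrow$(c) you argue via $\Prim(k^G)\cong (I/I^2)^*$ and the Burnside basis theorem, whereas the paper dualizes the group extension $1\to C_p\to G\to C_p^d\to 1$ to produce the cocentral sequence \eqref{E:SES} directly; your version is a clean shortcut for the Frattini dimension count, while the paper's version has the advantage of exhibiting the correspondence at the level of the extensions themselves. More importantly, the step you defer as ``the main obstacle'' contains the only genuine computation in the whole argument: with $z^p=z$ and $\rho_z=0$, the $z$-characteristic condition becomes $\ZO_z(\xi)=\xi^p-\xi=0$, which coefficientwise reads $\mu^p-\mu=0$ and therefore forces all coordinates of $\xi$ to lie in the prime field $\pfield$. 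This is what collapses $\HL^2(\Cobar A)$, an affine space over the algebraically closed field $k$, down to $\mathbb A_\pfield^{d(d+1)/2}\setminus\{0\}$, and hence produces a projective space over $\pfield$ with a $\PGL(d,\pfield)$-action rather than objects over $k$; without this the appearance of $\pfield$ in statement (b) is unexplained. Once that is supplied, the remaining bookkeeping (separating $p=2$ from $p>2$ via \eqref{E:HG2}, and checking that the $\pfield^\times$ factor acts by scalars so that $\GL(d,\pfield)$ descends to $\PGL(d,\pfield)$ on lines in $V\oplus(V\wedge V)$) is exactly as you describe, and the detour through $H^2\bigl((\mathbb Z/p)^d,\mathbb Z/p\bigr)$ you mention is not needed.
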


The paper is organized as follows. Section \ref{S:Pre} discusses $H$-module Hopf algebras, algebraic representations and their relations with cobar constructions. Section \ref{S:AGC} contains the definition of a compatible data $\mathscr D$ and the construction of $u(\mathscr D)$. Section \ref{S:thmA} provides the proof of Theorem \ref{C0:thmA}. Section \ref{S:Ext} is devoted to Hopf algebra extensions of $A$ by $B$. Section \ref{S:Grp} introduces cohomology groups and automorphism group actions to classify $\CH$. Section \ref{S:Realization} paves a way to compute the automorphism group actions on cohomology groups. Section \ref{S:last} gives the proof of Theorem \ref{C0:thmD}.

In an upcoming paper \cite{LinXT3}, we will use the results of this paper to classify $\CH(2)$, which contains 8 parametric families including $A(\lambda)$ in Example \ref{C0:Exp}. Therefore, the classification of connected Hopf algebras of dimension $\le p^3$ will be complete. We note that D.-G. Wang, J.J. Zhang and G. Zhuang in \cite{WZZ2} studied connected Hopf algebras $H$ over an algebraically closed field of characteristic 0 satisfying $\GKdim H=\dim \Prim(H)+1<\infty$. They show that such Hopf algebra is always isomorphic, as an algebra, to some universal enveloping algebra of a Lie algebra; see \cite[Theroem 0.5 or Theorem 2.7]{WZZ2}. In positive characteristic, the restricted version does not hold for finite-dimensional connected Hopf algebras generated by primitives plus a nonprimitive element. In \cite[\S 6]{LinXT2}, it is proved that the (B2) Hopf algebra in \cite[Theorem 1.3]{LinXT2} is not isomorphic, as an algebra, to any restricted universal enveloping algebras. Let $H$ be any Hopf algebra in $\CH$. Since it is generated by primitives plus a nonprimitive element, we can ask the same question.
\begin{ques}
When does $H$ have the same algebra structure as $u(\mathfrak l)$ for some restricted Lie algebra $\mathfrak l$?
\end{ques}
The question also helps us to understand the representations and Hochschild cohomology ring of $H$. More questions are presented in Section \ref{S:Grp} and Section \ref{S:Realization}.
\newline

\noindent
\textbf{Acknowledgments}
The author would like to greatly acknowledge the help and guidance from his Ph.D. advisor, Professor J.J. Zhang. The author is also thankful to Cris Negron, Linhong Wang and Guangbin Zhuang for their remarks on the earlier drafts of the paper and to Professor Brown for pointing out a typo in Reference \cite{BZ2}. The author thanks the referee for valuable suggestions and comments to largely improve the paper.

\section{Preliminary results}\label{S:Pre}
We gather some basic facts that will be used later in this paper.
\begin{definition}\label{D:HAModule}  
Let $H$ denote a Hopf algebra. A Hopf algebra $A$ is said to be an $H$-module Hopf algebra if $A$ is a (left) $H$-module satisfying that 
\begin{itemize}                                                                                                                                   
\item[(a)] $h\cdot(ab)=\sum (h_1\cdot a)(h_2\cdot b)$,
\item[(b)] $h\cdot1_A=\e(h)1_A$,
\item[(c)] $\Delta(h\cdot a)=\sum (h_1\cdot a_1)\otimes (h_2\cdot a_2)$, 
\item[(d)] $\e(h\cdot a)=\e(h)\e(a)$,
\end{itemize}
for all $h\in H$ and $a,b\in A$. 
\end{definition}
Note that $A$ is an $H$-\emph{module algebra} \cite[Definition 4.1.1]{MO93} if it satisfies (a) and (b). For an $H$-module Hopf algebra $A$, the two linear maps $h\otimes a\mapsto S(h\cdot a)$ and $h\otimes a\mapsto h\cdot S(a)$ are both convolution-inverse to the action $H\otimes A\to A$. Hence we have $h\cdot S(a)=S(h\cdot a)$ for all $h\in H$ and $a\in A$.
\begin{definition}\label{C1:Cobar}
For any Hopf algebra $A$, the cobar construction on $A$ is the differential graded algebra $\Cobar A$ defined as follows:
\begin{itemize}
\item[(a)] As a graded algebra, $\Cobar A$ is the tensor algebra $T\left(A^+\right)$,
\item[(b)] The differentials are given by
\begin{align}
\diff^n=\sum_{i=0}^{n-1}(-1)^{i+1}\ 1^i\otimes\overline{\Delta}\otimes 1^{n-i-1},\ \mbox{where}\ \overline{\Delta}(a)=\Delta(a)-a\otimes 1-1\otimes a\ \mbox{for any}\ a\in A^+. \label{D:diff}
\end{align}
\end{itemize}
\end{definition}
By definition, we have $\diff^1(a)=-\overline{\Delta}(a)$ and $\diff^2(a\otimes b)=-\overline{\Delta}(a)\otimes b+a\otimes \overline{\Delta}(b)$ for any $a,b\in A^+$; see \cite[\S 19]{Rational} for basic properties of bar and cobar constructions.
\begin{definition}\label{D:AR}
Let $\mathfrak h$ and $\mathfrak g$ be two restricted Lie algebras. An algebraic representation of $\mathfrak g$ on $\mathfrak h$ is a linear map $\rho: \mathfrak g\to \End_k(\mathfrak h)$ such that
\begin{itemize}\label{AR}
\item[(a)] $\rho_{[x,y]}=\rho_x\rho_y-\rho_y\rho_x$,
\item[(b)] $\rho_{\left(x^p\right)}=\left(\rho_x\right)^p$,
\item[(c)] $\rho_x([a,b])=[\rho_x(a),b]+[a,\rho_x(b)]$,
\item[(d)] $\rho_x(a^p)=\rho_x(a)\left(\ad\ a\right)^{p-1}$,
\end{itemize}
for any $x,y\in \mathfrak g$ and $a,b\in \mathfrak h$. 
\end{definition}
Note that $\mathfrak h$ becomes a restricted $\mathfrak g$-module via $\rho$ by (a) and (b), and there is an extension of restricted Lie algebras $0\to \mathfrak h\to \mathfrak h\rtimes \mathfrak g\to \mathfrak g\to 0$ \cite[7.4.9]{Wei}. 
In characteristic $p>0$, all the derivations on $u(\mathfrak h)$ form a restricted Lie algebra.
For any $\delta\in \Der(u(\mathfrak h))$ and $a,b\in u(\mathfrak h)$, direct computation shows that
\begin{align*}
\delta[a,b]=[\delta(a),b]+[a,\delta(b)],\quad \delta(a^p)=\delta(a)(\ad\ a)^{p-1}.
\end{align*}
Therefore by Definition \ref{AR},  any algebraic representation $\rho$ of $\mathfrak g$ on $\mathfrak h$ induces a restricted Lie algebra map from $\mathfrak g$ to $\Der(u(\mathfrak h))$. It makes $u(\mathfrak h)$ into a $u(\mathfrak g)$-module Hopf algebra. Moreover, we have

\begin{prop}\label{eqdef}
Algebraic representations of $\mathfrak g$ on $\mathfrak h$ are in $1$-$1$ correspondence with $u(\mathfrak g)$-module Hopf algebra structures on $u(\mathfrak h)$.
\end{prop}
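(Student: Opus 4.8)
The plan is to construct explicit maps in both directions and show they are mutually inverse. The forward construction is essentially laid out in the discussion preceding the proposition: given an algebraic representation $\rho$, conditions (c) and (d) of Definition \ref{D:AR} say precisely that for each $x\in\mathfrak g$ the operator $\rho_x$ is a derivation of $\mathfrak h$ compatible with the bracket and the $p$-operation, so that the assignment $a\mapsto a+\rho_x(a)\varepsilon$ defines a restricted Lie algebra homomorphism from $\mathfrak h$ into $u(\mathfrak h)[\varepsilon]/(\varepsilon^2)$ (viewed with its commutator bracket and $p$-th power operation). By the universal property of $u(\mathfrak h)$ this extends to an algebra homomorphism whose $\varepsilon$-component is a derivation $\delta_x\in\Der(u(\mathfrak h))$ extending $\rho_x$. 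Conditions (a) and (b), checked on the generating set $\mathfrak h$ (where two derivations agreeing on generators coincide), then show $x\mapsto\delta_x$ is a restricted Lie algebra map $\mathfrak g\to\Der(u(\mathfrak h))$, which by the universal property of $u(\mathfrak g)$ yields an algebra map $u(\mathfrak g)\to\End_k(u(\mathfrak h))$, i.e.\ a $u(\mathfrak g)$-module structure on $u(\mathfrak h)$. I would then verify axioms (a)--(d) of Definition \ref{D:HAModule}, as already indicated in the text.

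For the reverse direction, suppose $u(\mathfrak h)$ carries a $u(\mathfrak g)$-module Hopf algebra structure. For $x\in\mathfrak g\subseteq u(\mathfrak g)$, which is primitive, axiom (a) of Definition \ref{D:HAModule} makes the operator $a\mapsto x\cdot a$ a derivation of $u(\mathfrak h)$, while the coalgebra axiom (c) forces it to send primitives to primitives. Invoking the standard identification $\Prim(u(\mathfrak h))=\mathfrak h$ for the restricted enveloping algebra \cite{JA79}, the action therefore restricts to a linear endomorphism of $\mathfrak h$; I define $\rho_x$ to be this restriction. I would then read off conditions (a)--(d) of Definition \ref{D:AR}: (a) and (b) come from the action being an associative $u(\mathfrak g)$-action together with the identities $[x,y]=xy-yx$ and $x^{[p]}=x^p$ holding in $u(\mathfrak g)$; (c) and (d) come from $x$ acting as a derivation, using $[a,b]=ab-ba$ and $a^{[p]}=a^p$ in $u(\mathfrak h)$ and the characteristic-$p$ derivation formula $\delta(a^p)=\delta(a)(\ad\ a)^{p-1}$ recorded before the proposition.

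Finally I would check that the two assignments are inverse to each other. Starting from $\rho$, building $\delta_x$, and restricting back to $\mathfrak h$ recovers $\rho$ by construction. Conversely, starting from a module Hopf algebra structure, the key point is that the structure is uniquely determined by the induced $\rho$: the $u(\mathfrak g)$-action is determined by the action of the generating subspace $\mathfrak g$, each $x$ acts as a derivation of $u(\mathfrak h)$, and a derivation is determined by its values on the algebra generators $\mathfrak h=\Prim(u(\mathfrak h))$, i.e.\ by $\rho_x$. Hence rebuilding from $\rho$ returns the original structure.

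The main obstacle, I expect, is the reverse direction: making the restriction $\rho_x:=(x\cdot{-})|_{\mathfrak h}$ well defined hinges on the two facts that $x$ acts as a derivation preserving primitives and that $\Prim(u(\mathfrak h))=\mathfrak h$, and the mutual-inverse check rests on the uniqueness-of-extension principle (derivations determined on generators, algebra maps determined on $\mathfrak g$). Keeping careful track of the interplay between the restricted $p$-operation and the algebra $p$-th power across the identifications $\mathfrak g\hookrightarrow u(\mathfrak g)$ and $\mathfrak h\hookrightarrow u(\mathfrak h)$ --- so that conditions (b) and (d) of Definition \ref{D:AR} match axioms (b) and (d) of Definition \ref{D:HAModule} --- is where the verification must be done most carefully.
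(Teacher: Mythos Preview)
Your proposal is correct and follows essentially the same route as the paper. The paper's proof is extremely terse: the forward direction is handled in the paragraph preceding the proposition (any algebraic representation yields a restricted Lie map $\mathfrak g\to\Der(u(\mathfrak h))$, hence a $u(\mathfrak g)$-module Hopf algebra structure), and the reverse direction simply sets $\rho_x(a)=x\rightharpoonup a$ and declares the verification of Definition~\ref{D:AR} easy, with the bijectivity asserted to follow ``from the explicit constructions.'' Your write-up supplies exactly the details the paper omits---the dual-numbers trick to extend $\rho_x$ to a derivation of $u(\mathfrak h)$, the use of $\Prim(u(\mathfrak h))=\mathfrak h$ to see that the action restricts, and the uniqueness-on-generators argument for mutual inverseness---but the underlying strategy is identical.
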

\begin{proof}
By above argument, it suffices to show the other direction. Suppose $u(\mathfrak h)$ is a $u(\mathfrak g)$-module Hopf algebra via the action $\rightharpoonup$. We define the representation $\rho$ by $\rho_x(a)=x\rightharpoonup a$ for any $x\in \mathfrak g$ and $a\in \mathfrak h$. By Definition \ref{D:HAModule}, it is easy to show $\rho: \mathfrak g\to \End_k(\mathfrak h)$ is an algebraic representation. The bijection comes from the explicit constructions. 
\end{proof}

Let $H$ denote a Hopf algebra, and $A$ be an $H$-module Hopf algebra. Note that $A^+$ is invariant under the $H$-action by Definition \ref{D:HAModule} (d). 
Hence, we can consider $\Cobar A=T(A^+)$ as an $H$-module algebra via the comultiplication of $H$. 
In details, 
\begin{align*}
h\cdot 1_{\Cobar A}=&\ \e(h)1_{\Cobar A},\\
h\cdot \left(a_1\otimes a_2\otimes \cdots \otimes a_n\right)=&\ \sum (h_1\cdot a_1)\otimes(h_2\cdot a_2)\otimes \cdots \otimes (h_n\cdot a_n),
\end{align*}
for any $h\in H$ and $a_i\in A^+$. Moreover, we can pass the $H$-module algebra structure onto the cohomology ring of $\Cobar A$.
\begin{prop}\label{CRHModule}
The $H$-action commutes with the differentials of $\Cobar A$. In particular, the cohomology ring $\HL^\bullet(\Cobar A)$ becomes an $H$-module algebra.
\end{prop}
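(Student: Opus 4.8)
The plan is to reduce the whole statement to a single equivariance identity for the reduced comultiplication $\overline{\Delta}$ and then to propagate it through the higher differentials using coassociativity of $H$.

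First I would treat the base case. Working in $A\otimes A$ with the diagonal $H$-action $h\cdot(b\otimes c)=\sum(h_1\cdot b)\otimes(h_2\cdot c)$, axiom (c) of Definition \ref{D:HAModule} says precisely that $\Delta\colon A\to A\otimes A$ is $H$-linear, i.e. $\Delta(h\cdot a)=\sum(h_1\cdot a_1)\otimes(h_2\cdot a_2)$. Using axiom (b) together with the counit identities $\sum h_1\e(h_2)=h=\sum\e(h_1)h_2$, one computes $h\cdot(a\otimes 1)=(h\cdot a)\otimes 1$ and $h\cdot(1\otimes a)=1\otimes(h\cdot a)$. Subtracting these two grouplike terms from $\Delta(h\cdot a)$ then yields
\[
\overline{\Delta}(h\cdot a)=h\cdot\overline{\Delta}(a)\qquad(a\in A^+,\ h\in H).
\]
Since $\overline{\Delta}(a)\in A^+\otimes A^+$ (a standard coalgebra fact) and axiom (d) keeps $A^+$ stable under the action, the right-hand side is exactly the degree-$2$ cobar action, so this identity lives inside $\Cobar A$.

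Next I would upgrade this to the full differential. As $\diff^n$ is the signed sum of the operators $1^i\otimes\overline{\Delta}\otimes 1^{n-i-1}$, it is enough to show each summand is $H$-equivariant. Applying the $i$-th summand to $h\cdot(a_1\otimes\cdots\otimes a_n)=\sum(h_1\cdot a_1)\otimes\cdots\otimes(h_n\cdot a_n)$ replaces the $(i+1)$-st tensor leg by $\overline{\Delta}(h_{i+1}\cdot a_{i+1})$, which by the base case equals $\sum\big((h_{i+1})_1\cdot a_{i+1}'\big)\otimes\big((h_{i+1})_2\cdot a_{i+1}''\big)$, where $\overline{\Delta}(a_{i+1})=\sum a_{i+1}'\otimes a_{i+1}''$. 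Coassociativity of $\Delta_H$ now identifies the resulting string $h_1,\dots,h_i,(h_{i+1})_1,(h_{i+1})_2,h_{i+2},\dots,h_n$ with the $(n+1)$-fold iterated comultiplication of $h$, so the output is precisely the component-wise action of $h$ on the lengthened tensor $(1^i\otimes\overline{\Delta}\otimes 1^{n-i-1})(a_1\otimes\cdots\otimes a_n)$. Summing over $i$ with signs gives $\diff^n(h\cdot\xi)=h\cdot\diff^n(\xi)$.

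Finally, since the action commutes with every $\diff^n$, it preserves cocycles and coboundaries and hence descends to $\HL^\bullet(\Cobar A)$. The product on cohomology is induced by concatenation in $T(A^+)=\Cobar A$, which was already recalled to be an $H$-module algebra; thus the module-algebra axioms (a) and (b) hold on the chain level and pass to cohomology classes, making $\HL^\bullet(\Cobar A)$ an $H$-module algebra. I expect the only delicate point to be the coassociativity bookkeeping in the middle step: one has to verify that splitting a single Sweedler leg of $h$ at the position where $\overline{\Delta}$ is inserted matches the component-wise action on the tensor that has been lengthened by one slot, and this is exactly where axiom (c) and the coassociativity of $H$ interlock.
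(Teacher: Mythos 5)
Your proof is correct, and its heart --- the identity $\overline{\Delta}(h\cdot a)=h\cdot\overline{\Delta}(a)$ extracted from axiom (c) of Definition \ref{D:HAModule} together with axiom (b) and the counit identities --- is exactly the ``easy check'' the paper leaves to the reader. The only place you diverge is in how you pass from $\diff^1$ to $\diff^n$: the paper simply observes that $\Cobar A$ is a differential graded algebra generated in degree one, so equivariance of the differential on generators (degree one) forces it everywhere, since $\diff$ is a derivation and the action respects the tensor-algebra product; you instead verify each summand $1^i\otimes\overline{\Delta}\otimes 1^{n-i-1}$ directly via Sweedler bookkeeping and coassociativity of $\Delta_H$. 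Both are valid; the paper's shortcut is slicker but implicitly relies on the derivation property of $\diff$ and the $H$-module-algebra structure on $T(A^+)$, whereas your computation is self-contained and makes the coassociativity dependence explicit, at the cost of some notation. Your final step --- that the action preserves cocycles and coboundaries and that the module-algebra axioms descend to cohomology --- matches the paper's intent.
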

\begin{proof}
The cobar construction $\Cobar A$ is a differential graded algebra generated in degree one. Thus, it suffices to show that the $H$-action commutes with $\diff^1$, which is easy to check.
\end{proof}
As a consequence, we have
\begin{coro}\label{S:ARCR}
Let $\rho$ be an algebraic representation of $\mathfrak g$ on $\mathfrak h$. Then $\HL^\bullet(\Cobar u(\mathfrak h))$ becomes a $u(\mathfrak g)$-module algebra via the representation $\rho$. 
\end{coro}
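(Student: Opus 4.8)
The plan is to obtain this corollary by chaining together the two propositions that immediately precede it, with no new computation required. First I would invoke Proposition \ref{eqdef}: the algebraic representation $\rho$ of $\mathfrak g$ on $\mathfrak h$ determines a $u(\mathfrak g)$-module Hopf algebra structure on $u(\mathfrak h)$. Concretely, this is the structure already described in the paragraph preceding Proposition \ref{eqdef}, where $\rho$ induces a restricted Lie algebra map $\mathfrak g\to\Der(u(\mathfrak h))$ and thereby makes $u(\mathfrak h)$ a $u(\mathfrak g)$-module satisfying conditions (a)--(d) of Definition \ref{D:HAModule}. The point of citing Proposition \ref{eqdef} here is only to fix which module structure we use, namely the one attached to $\rho$.

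With that structure in hand, I would apply Proposition \ref{CRHModule} in the special case $H=u(\mathfrak g)$ and $A=u(\mathfrak h)$. That proposition says that for any Hopf algebra $H$ and any $H$-module Hopf algebra $A$, the $H$-action on $\Cobar A=T(A^+)$ (defined diagonally via the comultiplication of $H$) commutes with the cobar differentials, so that $\HL^\bullet(\Cobar A)$ inherits the structure of an $H$-module algebra. Substituting gives exactly that $\HL^\bullet(\Cobar u(\mathfrak h))$ is a $u(\mathfrak g)$-module algebra, with the action induced from the $u(\mathfrak g)$-action on $u(\mathfrak h)^+$, i.e.\ via $\rho$. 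This yields the stated conclusion.

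Since every ingredient is already established, there is essentially no obstacle; the corollary is a formal consequence. The only point deserving a moment's care is bookkeeping of the module structures: one should confirm that the $u(\mathfrak g)$-module Hopf algebra structure on $u(\mathfrak h)$ fed into Proposition \ref{CRHModule} is precisely the one produced from $\rho$ by Proposition \ref{eqdef}, so that the phrase ``via the representation $\rho$'' is justified. I would therefore state the proof in two sentences: apply Proposition \ref{eqdef} to view $u(\mathfrak h)$ as a $u(\mathfrak g)$-module Hopf algebra through $\rho$, then apply Proposition \ref{CRHModule} with $H=u(\mathfrak g)$ and $A=u(\mathfrak h)$ to conclude that $\HL^\bullet(\Cobar u(\mathfrak h))$ is a $u(\mathfrak g)$-module algebra.
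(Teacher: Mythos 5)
Your argument is exactly the paper's: the corollary is stated there as an immediate consequence of Proposition \ref{CRHModule} applied with $H=u(\mathfrak g)$ and $A=u(\mathfrak h)$, where the $u(\mathfrak g)$-module Hopf algebra structure on $u(\mathfrak h)$ is the one attached to $\rho$ via Proposition \ref{eqdef}. Your bookkeeping remark about matching the two module structures is the right (and only) point of care; nothing is missing.
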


\section{A general construction}\label{S:AGC}
We will first explain the data $\mathscr D$ in the introduction. We keep the same notations, where $A=u(\mathfrak h)$ with $\mathfrak h$ abelian and $B=k[z]/f(z)$ with primitive generator $z$ satisfying 
\[
f(z)=z^{p^n}+\lambda_{n-1}z^{p^{n-1}}+\cdots+\lambda_1z^p+\lambda_0z=0,\ \mbox{for}\ n\ge 1.
\]
We use $\mathfrak g$ to denote $\Prim(B)$. Let $\rho$ be an algebraic representation of $\mathfrak g$ on $\mathfrak h$. We denote $T=(\mathfrak g,\mathfrak h,\rho)$, and $T$ is said to be a \emph{type}. We use $\Hom_{\gr \pfield}(\Cobar A,\Cobar A)$ to denote all the $\pfield$-linear graded maps from $\Cobar A$ to itself. Note that by Corollary \ref{S:ARCR}, $\Cobar A$ is a $B$-module algebra via $\rho$. Since the $B$-action commutes with the differentials in $\Cobar A$, we can consider $\rho_z$ as a degree zero cochain map from $\Cobar A$ to itself.
\begin{definition}\label{C2:Def1}
We define three degree zero cochain maps in $\Hom_{\gr \pfield}(\Cobar A,\Cobar A)$. 
\begin{itemize}
\item[(a)] For any element $a\in\left(A^+\right)^n$, the $p$-th power map $\PM$ is given by $\PM(a)=a^p$.
\item[(b)] Inductively, we define $\RT_z^0=\Id$ and $\RT_z^m=\PM\circ\RT_z^{m-1}+\rho_z^{p^m-p^{m-1}}\circ\RT_z^{m-1}$ for any $m\ge 1$.
\item[(c)] The $z$-operator on $\Cobar A$ is given by $\ZO_z:=\RT_z^{n}+\lambda_{n-1}\RT_z^{n-1}+\cdots+\lambda_1\RT_z^1+\lambda_0\RT_z^0$.
\end{itemize}
\end{definition}
Throughout the paper, by abuse of notations, we will also consider $\PM,\RT_z^m,\ZO_z$ as maps from $\left(A^+\right)^m$ to itself for any integer $m\ge 0$. All these maps appear naturally in the following lemma. 

\begin{lem}\label{AppOperator}
Let $F$ be the algebra generated by $A$ and an indeterminate $\ID$, subject to the relations: $[\ID, a]=\rho_z(a)$ for all $a\in A$. Suppose $f\in A^+\otimes A^+$. In the algebra of tensor product $F\otimes F$, we have
\[
\left(\ID\otimes 1+1\otimes \ID+f\right)^{p^m}=\ \ID^{p^m}\otimes 1+1\otimes \ID^{p^m}+\RT_z^{m}(f),
\]
for all $m\ge 0$.
\end{lem}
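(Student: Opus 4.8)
The plan is to prove the identity by induction on $m$, using the recursive definition of $\RT_z^m$ from Definition \ref{C2:Def1}(b). The base case $m=0$ is immediate, since $\RT_z^0=\Id$ and raising to the $p^0=1$ power is trivial. For the inductive step, I would assume the identity holds for some $m\ge 0$ and compute the $p^{m+1}$-th power by writing it as the $p$-th power of the $p^m$-th power, i.e.
\[
\left(\ID\otimes 1+1\otimes \ID+f\right)^{p^{m+1}}=\left[\left(\ID\otimes 1+1\otimes \ID+f\right)^{p^m}\right]^p=\left[\ID^{p^m}\otimes 1+1\otimes \ID^{p^m}+\RT_z^m(f)\right]^p,
\]
where the second equality uses the inductive hypothesis.

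The key point is then to raise the three-term sum $u+v+w$ to the $p$-th power in the algebra $F\otimes F$, where $u=\ID^{p^m}\otimes 1$, $v=1\otimes \ID^{p^m}$, and $w=\RT_z^m(f)$. First I would observe that $u$ and $v$ commute, so $u+v$ behaves like a single element whose $p$-th power is $u^p+v^p=\ID^{p^{m+1}}\otimes 1+1\otimes \ID^{p^{m+1}}$ (characteristic $p$, commuting summands). The subtlety lies in the cross-terms coming from $w$, since $w$ does not commute with $u$ and $v$: the defining relation $[\ID,a]=\rho_z(a)$ forces $[\ID^{p^m}\otimes 1, \RT_z^m(f)]$ to be governed by iterated applications of $\rho_z$. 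The natural tool here is Jacobson's formula for $(a+b)^p$ in characteristic $p$, together with the observation that $\ad(u+v)$ acts on $w=\RT_z^m(f)\in (A^+)^{\otimes 2}$ (sitting in the degree-$m$-graded piece) precisely as $\rho_z^{p^m}$ applied diagonally. Concretely, I expect $\left(\ad(u+v)\right)w$ to reproduce $\rho_z^{p^m}(f)$ in the appropriate tensor slot, so that the higher Jacobson correction terms assemble into $\rho_z^{p^{m+1}-p^m}\circ \RT_z^m(f)$.

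Putting these together, the $p$-th power should collapse to
\[
\ID^{p^{m+1}}\otimes 1+1\otimes \ID^{p^{m+1}}+\PM\!\left(\RT_z^m(f)\right)+\rho_z^{p^{m+1}-p^m}\!\left(\RT_z^m(f)\right),
\]
and by Definition \ref{C2:Def1}(b) the last two terms are exactly $\RT_z^{m+1}(f)$, completing the induction. The main obstacle I anticipate is the bookkeeping in the Jacobson expansion: one must verify carefully that every intermediate Lie-polynomial term in $u,v,w$ either vanishes, commutes into the two principal monomials, or regroups into the single correction $\rho_z^{p^{m+1}-p^m}\circ\RT_z^m$. The crucial structural input making this work is that $\rho_z$ acts as a \emph{derivation} on $\Cobar A$ that commutes with the $p$-power map on the cohomological level — more precisely, that the relation $[\ID,a]=\rho_z(a)$ extends compatibly to tensor powers via the $B$-module algebra structure of Corollary \ref{S:ARCR} — so that the noncommutative $p$-th power of $u+v+w$ behaves like the twisted Frobenius encoded in $\RT_z^{m+1}$. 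I would isolate this cross-term computation as the heart of the argument and treat the commuting part $u+v$ as routine.
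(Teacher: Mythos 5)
Your proposal follows essentially the same route as the paper: induction on $m$, reduction of the $p^{m+1}$-th power to the $p$-th power of $X^{p^m}+\RT_z^m(f)$ where $X=\ID\otimes 1+1\otimes\ID$, and identification of the Jacobson correction as $\left(\ad X^{p^m}\right)^{p-1}=\rho_z^{p^{m+1}-p^m}$ acting through the module-algebra structure on $A^+\otimes A^+$. The one piece you flag as an anticipated obstacle---the collapse of all higher Jacobson terms to the single summand $\left(\ad X^{p^m}\right)^{p-1}\RT_z^m(f)$---is exactly what the paper disposes of by citing \cite[Lemma A.1]{Wan}, which applies because $A$ (hence $A\otimes A$) is commutative and stable under $\ad X$.
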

\begin{proof}
We will prove the statement by induction on $m\ge 0$. 
The statement is trivial for $m=0$. 
Suppose it is true for $m=n$. Write $X=\ID \otimes 1+1\otimes \ID$ and observe that
\[
(\ad X^{p^n})^{p-1}=\left(\ad X\right)^{p^{n+1}-p^n}=\left(\ad \ID \otimes 1+1\otimes \ad \ID\right)^{p^{n+1}-p^n}=\rho_z^{p^{n+1}-p^n}.
\]
Hence by \cite[Lemma A.1]{Wan} ($A$ is commutative), we have
\begin{align*}
(X+f)^{p^{n+1}}=&\ [X^{p^n}+\RT_z^{n}(f)]^p\\
=&\ X^{p^{n+1}}+(\RT_z^{n}(f))^p+(\ad X^{p^n})^{p-1}\RT_z^n(f)\\
=&\ X^{p^{n+1}}+(\PM\circ \RT_z^{n}+\rho_z^{p^{n+1}-p^n}\circ \RT_z^n)(f)\\
=&\ \ID^{p^{n+1}}\otimes 1+1\otimes \ID^{p^{n+1}}+\RT_z^{n+1}(f).
\end{align*}
\end{proof}
We list some basic properties regarding these maps we have defined.
\begin{prop}\label{Maps}
The following are true:
\begin{itemize}
\item[(a)] All maps $\PM,\RT_z^m$ and $\ZO_z$ commute with the differentials of $\Cobar A$.
\item[(b)] $\RT_z^m=\PM\circ \RT_z^{m-1}+\rho_z^{p^m-1}$ for all $m\ge 1$.
\item[(c)] $\rho_z\circ \ZO_z=0$.
\end{itemize}
\end{prop}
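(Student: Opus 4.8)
The plan is to reduce all three parts to the interplay of the two building blocks $\PM$ and $\rho_z$, exploiting that $A$ is commutative. The key reinterpretation I would make first is that on the degree-$m$ piece $\left(A^+\right)^m\subseteq A^{\otimes m}$ the map $\PM$ is precisely the Frobenius $c\mapsto c^p$ of the commutative $\pfield$-algebra $A^{\otimes m}$ (over $\pfield=\mathbb F_p$ this is genuinely $\pfield$-linear, which is why it lies in $\Hom_{\gr\pfield}(\Cobar A,\Cobar A)$), while $\rho_z$ acts as a derivation of $A^{\otimes m}$, being induced by the primitive $z$ through iterated comultiplication. Since $\rho_z$ already commutes with the differentials of $\Cobar A$ by Proposition \ref{CRHModule}, the whole proposition rests on two elementary facts: (i) $\PM$ commutes with the differentials, and (ii) $\rho_z\circ\PM=0$.

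For part (a) I would establish (i) from the naturality of Frobenius: because $\Delta$ is an algebra map, $\Delta(a^p)=\Delta(a)^p$, and expanding $\Delta(a)=a\otimes 1+1\otimes a+\overline{\Delta}(a)$ while using additivity of Frobenius on the commutative algebra $A\otimes A$ gives the ``freshman's dream'' identity $\overline{\Delta}(a^p)=\PM(\overline{\Delta}(a))$ for $a\in A^+$. Applying this in a single tensor slot (and Frobenius componentwise in the others) shows $\PM$ commutes with every summand $1^i\otimes\overline{\Delta}\otimes 1^{m-i-1}$ of $\diff^{m}$, hence with $\diff$. Since $\rho_z$ commutes with $\diff$ as well, so does every composition and sum built from $\PM$ and $\rho_z$; in particular $\RT_z^m$ and $\ZO_z$ do, which is exactly (a).

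For (ii), and thence part (b), I would note that a derivation of a commutative $\pfield$-algebra annihilates $p$-th powers, $\rho_z(c^p)=p\,c^{p-1}\rho_z(c)=0$; equivalently this is Definition \ref{D:AR}(d) with $\ad\,a=0$. Hence $\rho_z\circ\PM=0$. I would then prove the auxiliary identity $\rho_z\circ\RT_z^{k}=\rho_z^{p^{k}}$ by induction on $k$: it holds for $k=0$, and in the inductive step $\rho_z\circ\RT_z^{k}=(\rho_z\circ\PM)\circ\RT_z^{k-1}+\rho_z^{p^{k}-p^{k-1}}\circ(\rho_z\circ\RT_z^{k-1})$, where the first term vanishes by (ii) and the second equals $\rho_z^{p^{k}}$ by the inductive hypothesis. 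Part (b) is then immediate, since the extra summand in the definition satisfies $\rho_z^{p^{m}-p^{m-1}}\circ\RT_z^{m-1}=\rho_z^{p^{m}-p^{m-1}-1}\circ(\rho_z\circ\RT_z^{m-1})=\rho_z^{p^{m}-1}$.

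Finally, for part (c) I would apply the auxiliary identity termwise to $\ZO_z=\RT_z^{n}+\lambda_{n-1}\RT_z^{n-1}+\cdots+\lambda_0\RT_z^{0}$, obtaining $\rho_z\circ\ZO_z=\rho_z^{p^{n}}+\lambda_{n-1}\rho_z^{p^{n-1}}+\cdots+\lambda_0\rho_z$. Because $\rho$ is a restricted representation, $\rho_z^{p^{k}}=\rho_{z^{p^{k}}}$, and linearity of $\rho$ collapses this to $\rho_{f(z)}$; since $f(z)=0$ in $B$ we conclude $\rho_z\circ\ZO_z=0$. The only genuinely delicate point I anticipate is the reinterpretation of $\PM$ as the Frobenius of the commutative algebras $A^{\otimes m}$ together with the identity $\overline{\Delta}(a^p)=\PM(\overline{\Delta}(a))$ underpinning part (a); once this and the vanishing $\rho_z\circ\PM=0$ are in place, parts (b) and (c) are purely formal manipulations.
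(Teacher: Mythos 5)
Your proof is correct and follows essentially the same route as the paper's: part (a) by checking that the building blocks $\PM$ and $\rho_z$ commute with the differentials, part (b) by induction from the vanishing $\rho_z\circ\PM=0$, and part (c) by collapsing $\rho_z\circ\ZO_z$ to $\rho_{f(z)}=0$ via the restrictedness of $\rho$. The only organizational difference is that you isolate the identity $\rho_z\circ\RT_z^{k}=\rho_z^{p^{k}}$ as a separate induction and derive both (b) and (c) from it, whereas the paper proves (b) first and performs that reduction inside the computation for (c); the mathematical content is the same.
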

\begin{proof}
(a) Easy. (b) Induction using the fact $\rho_z\circ\PM=0$.

(c) By definition and (b), we have
\begin{align*}
\rho_z\circ \ZO_z=&\ \rho_z\circ (\RT_z^{n}+\cdots+\lambda_1\RT_z^1+\lambda_0\RT_z^0)\\
=&\ \rho_z\circ \left(\PM\circ\RT_z^{n-1}+\cdots+\lambda_1\PM\circ \RT_z^{0}\right)+\rho_z(\rho_z^{p^n-1}+\cdots+\lambda_1\ \rho_z^{p-1}+\lambda_0)\\
=&\ \left(\rho_z\right)^{p^n}+\lambda_{n-1} \left(\rho_z\right)^{p^{n-1}}+\cdots+\lambda_1 \left(\rho_z\right)^{p}+\lambda_0 \rho_z\\
=&\ \rho_{\left(z^{p^n}+\lambda_{n-1}z^{p^{n-1}}+\cdots+\lambda_1 z^p+\lambda_0z\right)}\\
=&\ 0.
\end{align*}
\end{proof}
By Proposition \ref{Maps} (a), we can view $\ZO_z$ as a $\pfield$-linear map from $\HL^\bullet(\Cobar A)$ to itself of degree zero. Now, we are able to describe the element $\chi$ in \eqref{Data}.
\begin{definition}\label{D:Zco}
Let $\chi\in A^+\otimes A^+$. We say that $\chi$ is a $z$-cocycle if 
\begin{itemize}
\item[(a)] $\chi\in \cocy^2(\Cobar A)$, 
\item[(b)] $\ZO_z(\chi)\in \cobo^2(\Cobar A)$. 
\end{itemize}
Moreover, any cohomology class $\xi\in \HL^2(\Cobar A)$ is said to be $z$-characteristic if $\ZO_z(\xi)=0$. 
\end{definition}
Note that $\chi$ is a $z$-cocycle if and only if $[\chi]$ is $z$-characteristic in $\HL^2(\Cobar A)$. Finally, we give the construction of $u(\mathscr D)$ by using the date $\mathscr D$, where $\Theta\in A^+$ and $\chi\in (A^+)^2$ is a $z$-cocycle satisfying 
\begin{align}\label{E:CCon}
\rho_z(\Theta)=0,\ \ZO_z(\chi)=\diff^1(\Theta).
\end{align} 

Explicitly, $u(\mathscr D)$ is generated by $A$ and an indeterminate $\ID$, subject to the relations
\begin{gather}\label{RHD}
\ID^{p^n}+\lambda_{n-1} \ID^{p^{n-1}}+\cdots+\lambda_1\ID^p+\lambda_0 \ID+\Theta=0,\ [\ID,a]=\rho_z(a),
\end{gather}
for all $a\in A$. The comultiplication of $\ID$ is given by
\begin{align}\label{CHD}
\Delta(\ID)=\ID\otimes 1+1\otimes \ID+\chi.
\end{align}

In the following, we show there is a PBW basis for $u(\mathscr D)$, which covers the dimensionality in Theorem \ref{C0:thmA} (a). 
 
\begin{lem}\label{basis}
As a left $A$-module, $u(\mathscr D)$ is free with basis $\{x^i|0\le i\le p^n-1\}$. 
\end{lem}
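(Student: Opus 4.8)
The plan is to establish the PBW-type basis $\{x^i \mid 0 \le i \le p^n-1\}$ in two halves: first that these elements \emph{span} $u(\mathscr D)$ as a left $A$-module, and then that they are \emph{linearly independent}. The spanning half is the easy direction and follows directly from the defining relations \eqref{RHD}. Indeed, $u(\mathscr D)$ is generated as an algebra by $A$ together with $\ID$, so every element is a sum of words in $A$ and $\ID$. The commutation relation $[\ID, a] = \rho_z(a)$ (with $\rho_z(a) \in A$) lets me move all occurrences of $\ID$ to the right of any element of $A$ at the cost of lower-order correction terms in $A$, so $u(\mathscr D)$ is spanned as a left $A$-module by the powers $\{\ID^i \mid i \ge 0\}$. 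The monic polynomial relation $\ID^{p^n} + \lambda_{n-1}\ID^{p^{n-1}} + \cdots + \lambda_0\ID + \Theta = 0$ then expresses $\ID^{p^n}$ as a left $A$-combination of lower powers of $\ID$ (the constant term $\Theta$ lies in $A^+\subseteq A$), and by induction every $\ID^i$ with $i \ge p^n$ reduces to an $A$-combination of $\{\ID^j \mid 0 \le j \le p^n-1\}$. Hence these $p^n$ powers span.

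The harder and essential half is linear independence, which is equivalent to showing $\dim_k u(\mathscr D) \ge p^n \cdot \dim_k A$, i.e. that no unexpected collapse occurs. The cleanest route is to exhibit a concrete module or representation of $u(\mathscr D)$ on which the $p^n$ powers of $\ID$ act freely over $A$. My preferred approach is to build a faithful action of $u(\mathscr D)$ on the free left $A$-module $M := \bigoplus_{i=0}^{p^n-1} A\,e_i$ of rank $p^n$: let $A$ act by left multiplication on each component (using the commutation relation to define the twisted action that incorporates $\rho_z$), and let $\ID$ act as the "shift" $e_i \mapsto e_{i+1}$ for $i < p^n-1$, with $\ID \cdot e_{p^n-1}$ defined by the polynomial relation so that $\ID^{p^n}$ acts as prescribed. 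I must verify that this assignment respects \emph{all} the defining relations of $u(\mathscr D)$ — both the algebra relations \eqref{RHD} and implicitly the consistency needed for it to be a well-defined module. Once such an action exists and the images of $\{\ID^i e_0\}_{i=0}^{p^n-1}$ are the free generators $e_i$, the powers $\ID^i$ cannot satisfy any nontrivial left $A$-linear relation, giving independence.

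The main obstacle will be checking that the shift action is genuinely well-defined — that is, that the associativity constraint between the relation $[\ID,a]=\rho_z(a)$ and the polynomial relation is consistent on $M$. Concretely, when I push $\ID$ past an element $a \in A$ and then apply the polynomial relation at the top degree $i = p^n-1$, the two orders of operation must agree; this is exactly where the compatibility condition $\rho_z(\Theta) = 0$ from \eqref{E:CCon} should enter, ensuring that the polynomial relation is $\ad_\ID$-stable so that conjugating the relation by $\ID$ returns the same relation. I would verify this by computing $\ID \cdot (a e_{p^n-1})$ two ways and confirming equality modulo the defining relations, invoking Lemma \ref{AppOperator} if needed to control how $\RT_z^m$ and the operators interact with powers of $\ID$. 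An alternative, perhaps less delicate, strategy would be to filter $u(\mathscr D)$ by powers of $\ID$ and pass to the associated graded algebra, where $\chi$ and $\Theta$ drop out and the relations become those of a skew polynomial ring $A[\ID; \rho_z]$ modulo $\ID^{p^n}$; standard Ore-extension arguments then give the PBW basis for the associated graded, which lifts back to $u(\mathscr D)$. I expect either route to close the argument, with the consistency verification being the sole point requiring genuine care rather than routine bookkeeping.
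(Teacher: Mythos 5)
Your overall architecture is the same as the paper's: the paper applies Bergman's diamond lemma (the left version over the base ring $A$), which is the systematic packaging of exactly the consistency checks you propose to do by hand via a faithful action on the free module $\bigoplus_{i=0}^{p^n-1} A\,e_i$. The spanning half and the reduction of the problem to resolving the overlaps between the relations are fine. However, you misdiagnose the crux. The overlap you single out --- pushing $\ID$ past $a\in A$ and then applying the polynomial relation at the top degree, i.e.\ comparing $(\ID^{p^n})a$ with $\ID^{p^n-1}(\ID a)$ --- is \emph{not} resolved by $\rho_z(\Theta)=0$. Since $A$ is commutative, $\Theta$ commutes with $a$ for free; what the two computations actually require is the operator identity
\[
\rho_z^{p^n}(a)=-\sum_{i=0}^{n-1}\lambda_i\,\rho_z^{p^i}(a),
\]
i.e.\ $f(\rho_z)=0$ on $A$. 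This is the nontrivial input, and it holds because $\rho$ is an algebraic representation of $\mathfrak g$ (Definition \ref{D:AR}(b) gives $\rho_{z^{p^i}}=\rho_z^{p^i}$, and $f(z)=0$ in $\mathfrak g$ forces $\rho_{f(z)}=0$). This is precisely the computation the paper carries out when resolving its ambiguity $(\ID^{p^n})a=\ID^{p^n-1}(\ID a)$, together with the observation that $\binom{p^n}{j}\equiv 0 \pmod p$ for $0<j<p^n$ so that $\ID^{p^n-1}(\ID a)=a\ID^{p^n}+\rho_z^{p^n}(a)$.

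The condition $\rho_z(\Theta)=0$ from \eqref{E:CCon} is needed, but for the \emph{other} overlap: comparing $\ID\cdot(\ID^{p^n})$ with $(\ID^2)\cdot\ID^{p^n-1}$, where commuting $\ID$ past $\Theta$ produces the extra term $\rho_z(\Theta)$, which must vanish for the relation to be $\ad_\ID$-stable. So your instinct that "conjugating the relation by $\ID$ returns the same relation" is where $\rho_z(\Theta)=0$ enters is correct, but that is a separate check from the $A$-overlap. Since you flag the consistency verification as the sole point requiring genuine care and then point at the wrong mechanism for the harder of the two checks, you should redo that verification with the identity $f(\rho_z)=0$ in hand; once that is supplied, either your module-theoretic route or the filtered/associated-graded route closes the argument.
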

\begin{proof}
We apply the left version of Bergman \cite[Proposition 7.1]{Beg}, in which we should replace the base ring by $A$. It is clear that the following are all the possible ambiguities:
\begin{align}
(xa)a'=&x(aa'),\tag{a}\\
\ID(\ID^{p^n})=& \left(\ID^2\right)\ID^{p^n-1},\tag{b}\\
(\ID^{p^n})a=& \ID^{p^n-1}(\ID a),\tag{c}
\end{align}
for all $a,a'\in A$. It is easy to check that (a) and (b) are resolvable. We will do the reduction for (c).
By induction, we have $\ID^{s-1}\left(\ID a\right)=\sum_{j=0}^{s}{s\choose j}\ \rho_z^{j}(a)\ \ID^{s-j}$, for all $s\ge 1$. Hence,
\begin{align*}
\ID^{p^n-1}(\ID a)=a(\ID^{p^n})+\rho_z^{p^n}(a)=a(-\sum_{i=0}^{n-1}\lambda_i\ \ID^{p^i}-\Theta)+\rho_z^{p^n}(a).
\end{align*}
On the other hand,
\begin{align*}
(\ID^{p^n})a=&\ (-\sum_{i=0}^{n-1}\lambda_i\ \ID^{p^i}-\Theta)a=a(-\sum_{i=0}^{n-1}\lambda_i\ \ID^{p^i}-\Theta)-\sum_{i=0}^{n-1}\lambda_i\ \rho_z^{p^i}(a)\\
=&\ a(-\sum_{i=0}^{n-1}\lambda_i\ \ID^{p^i}-\Theta)+\rho_{\left(-\sum_{i=0}^{n-1}\lambda_iz^{p^i}\right)}(a)=a(-\sum_{i=0}^{n-1}\lambda_i\ \ID^{p^i}-\Theta)+\rho_z^{p^n}\left(a\right).
\end{align*}
Therefore, (c) is resolvable.
\end{proof}

\section{Proof of Theorem \ref{C0:thmA}}\label{S:thmA}
\begin{lem}\label{pinA}
Let $a\in u(\mathscr D)$. Then we have $\Delta(a)-a\otimes 1-1\otimes a\in A\otimes A$ if and only if $a\in A+\sum_{i=0}^{n-1}k\ID^{p^i}$.
\end{lem}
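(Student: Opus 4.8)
The plan is to prove both implications using the PBW decomposition $u(\mathscr D)=\bigoplus_{i=0}^{p^n-1}A\,x^i$ from Lemma \ref{basis}, and to track the ``$x$-bidegree'' of elements of $u(\mathscr D)\otimes u(\mathscr D)=\bigoplus_{i,j}A x^i\otimes A x^j$. Write $\overline{\Delta}(a)=\Delta(a)-a\otimes 1-1\otimes a$; the condition $\overline{\Delta}(a)\in A\otimes A$ says precisely that all components of $\overline{\Delta}(a)$ outside the $(0,0)$-summand $A\otimes A$ vanish.

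For the ``if'' direction, the containment $\overline{\Delta}(A)\subseteq A\otimes A$ is immediate because $A$ is a sub-Hopf-algebra, so $\Delta(a)\in A\otimes A$ for $a\in A$. For $a=x^{p^i}$ with $0\le i\le n-1$, I would apply Lemma \ref{AppOperator} to $f=\chi$: since $\Delta$ is an algebra map and $\Delta(x)=x\otimes 1+1\otimes x+\chi$, working in the quotient $u(\mathscr D)\otimes u(\mathscr D)$ of $F\otimes F$ gives
\[
\Delta(x^{p^i})=\bigl(x\otimes 1+1\otimes x+\chi\bigr)^{p^i}=x^{p^i}\otimes 1+1\otimes x^{p^i}+\RT_z^{i}(\chi),
\]
hence $\overline{\Delta}(x^{p^i})=\RT_z^{i}(\chi)\in A\otimes A$ because $\RT_z^i$ is built from the $p$-power map $\PM$ and $\rho_z$, both of which preserve $A$. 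Linearity then covers all of $A+\sum_{i=0}^{n-1}k\,x^{p^i}$.

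For the ``only if'' direction I would argue by induction on the top PBW-degree. Write $a=\sum_{i=0}^{p^n-1}a_i x^i$ with $a_i\in A$ and let $N=\max\{i:a_i\neq 0\}$; if $N=0$ then $a\in A$ and we are done. The key observation is that for every $i$ the left-hand $x$-degree of $\Delta(a_i x^i)=\Delta(a_i)\Delta(x)^i$ is at most $i$ (each factor $x\otimes1$ raises it by one, each $1\otimes x$ or $\chi$ does not, and reducing to PBW normal form via $[x,a]=\rho_z(a)$ only lowers $x$-degrees), and that the top total-degree part of $\Delta(x)^i$ is the naive binomial $\sum_k\binom{i}{k}x^k\otimes x^{i-k}$, because $x\otimes1$ and $1\otimes x$ commute in $u(\mathscr D)\otimes u(\mathscr D)$ while every factor of $\chi$ strictly lowers the total $x$-degree. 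Consequently only $i=N$ contributes to the bidegrees $(k,N-k)$. Reading off the bidegree $(k,N-k)$ component for $0<k<N$ forces $\binom{N}{k}\Delta(a_N)=0$, and since $\Delta(a_N)\neq0$ we get $\binom{N}{k}\equiv 0$ in $k$ for all $0<k<N$; by the standard Lucas/Kummer fact this means $N=p^{\,j}$ for some $0\le j\le n-1$ (as $N\le p^n-1$). Reading off the bidegree $(N,0)$ component, where only $(x\otimes1)^N=x^N\otimes1$ survives, gives $(\Delta(a_N)-a_N\otimes 1)(x^N\otimes 1)=0$; freeness of $u(\mathscr D)$ over $A$ lets me cancel $x^N\otimes1$ on the left factor, so $\Delta(a_N)=a_N\otimes1$ and hence $a_N=\e(a_N)1\in k$ by the counit axiom.

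With $a_N\in k$ and $N=p^j$, the element $a-a_N x^{p^j}$ has strictly smaller top PBW-degree, and it still satisfies the hypothesis because $\overline{\Delta}(a_N x^{p^j})=a_N\RT_z^{j}(\chi)\in A\otimes A$ by the ``if'' direction. Induction then yields $a\in A+\sum_{i=0}^{n-1}k\,x^{p^i}$. I expect the main obstacle to be the bookkeeping that isolates the relevant bidegree components: one must verify rigorously that passing to PBW normal form never raises the $x$-degree on either tensor factor and that every occurrence of $\chi$ drops the total $x$-degree, so that the leading bidegree data are exactly the naive binomial coefficients times $\Delta(a_N)$. Once this filtration statement is in hand, the extraction of $N=p^j$ (combinatorics of $\binom{N}{k}$) and of $a_N\in k$ (cancellation plus the counit) are routine.
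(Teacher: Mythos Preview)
Your proof is correct and essentially reconstructs the argument the paper defers to (the paper only says ``One direction of the proof is clear. The other direction follows exactly the same argument in the proof of \cite[Theorem 4.5]{Wan}''). The key ideas match: use the PBW decomposition $u(\mathscr D)=\bigoplus_i A x^i$, filter $u(\mathscr D)\otimes u(\mathscr D)$ by total $x$-degree, observe that the top bidegree part of $\Delta(a_N x^N)$ is $\Delta(a_N)\sum_k\binom{N}{k}x^k\otimes x^{N-k}$, and then extract the two consequences $\binom{N}{k}=0$ for $0<k<N$ (so $N=p^j$ by Lucas) and $\Delta(a_N)=a_N\otimes 1$ (so $a_N\in k$).

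Two small points you could tighten in a write-up. First, the sentence ``the left-hand $x$-degree of $\Delta(a_i x^i)$ is at most $i$'' is true but not what you actually use; the relevant bound is on the \emph{total} $x$-degree, which you invoke a line later. Second, the cancellation step is cleanest phrased as: the map $A\otimes A\to Ax^N\otimes A$, $c\otimes d\mapsto cx^N\otimes d$, is an isomorphism of $k$-spaces because $\{x^i\}$ is a free left $A$-basis; hence $(\Delta(a_N)-a_N\otimes 1)(x^N\otimes 1)=0$ in $Ax^N\otimes A$ forces $\Delta(a_N)=a_N\otimes 1$. The filtration bookkeeping you flag as the ``main obstacle'' is genuinely routine once you note that $[x,a]=\rho_z(a)\in A$ implies multiplication by $x$ raises the PBW $x$-degree by at most one, with leading term the naive one.
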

\begin{proof}
By Lemma \ref{basis}, we can write every element $a\in u(\mathscr D)$ as
\begin{align}\label{P:Formula1}
a=\sum_{i=0}^{p^n-1}a_i\ID^{i},
\end{align}
for some $a_i\in A$ in a unique way. One direction of the proof is clear. The other direction follows exactly the same argument in the proof of \cite[Theorem 4.5]{Wan}.
\end{proof}
\noindent
\textbf{Proof of Theorem \ref{C0:thmA} (a).} By Lemma \ref{basis}, it suffices to show that $u(\mathscr D)$ is a bialgebra. In second part (b), we prove that $u(\mathscr D)$ is an extension between two connected ones, which is necessarily connected, as is seen from the proof of \cite[Lemma 4]{TK79}. Moreover, the antipode exists automatically \cite[Lemma 14]{MO93}. 

Denote by $F$ the algebra generated by $A$ and $\ID$, subject to the relations $[\ID,a]=\rho_z(a)$, for all $a\in A$. 
The comultiplication of $x$ is given by \eqref{CHD}. It is easy to check $\Delta$ extends to an algebra map from $F$ to $F\otimes F$.  
%
%
After setting $\e(\ID)=0$, $(F,m,u,\Delta,\e)$ becomes a well-defined bialgebra. 

Next, we denote $X=\ID^{p^n}+\lambda_{n-1} \ID^{p^{n-1}}+\cdots+\lambda_0 \ID+\Theta$ as an element in F. Because $u(\mathscr D)\simeq F/(X)$, it suffices to show that $(X)$ is a bi-ideal in $F$. According to Lemma \ref{AppOperator}, we have
\[
\Delta(\ID^{p^s})=(\ID \otimes 1+1\otimes \ID+\chi)^{p^s}=\ID^{p^s}\otimes 1+1\otimes \ID^{p^s}+\RT_z^s(\chi),
\]
for all $s\ge 0$. Thus
\begin{align*}
\Delta(X)=&\ X\otimes 1+1\otimes X+\left(\RT_z^n(\chi)+\lambda_{n-1}\RT_z^{n-1}(\chi)+\cdots+\lambda_0\chi\right)+\left(\Delta(\Theta)-\Theta\otimes 1-1\otimes \Theta\right)\\
=&\ X\otimes 1+1\otimes X+\ZO_z(\chi)-d^1(\Theta)\\
=&\ X\otimes 1+1\otimes X.
\end{align*}
One sees that $\e(X)=0$ since $\Theta\in A^+$, which gives the conclusion. 
\newline

\noindent
\textbf{Proof of Theorem \ref{C0:thmA} (b).}
It is clear that $A$ is a normal subalgebra, i.e., $A^+u(\mathscr D)=u(\mathscr D)A^+$ according to \eqref{RHD}. Moreover, the quotient algebra $u(\mathscr D)/A^+u(\mathscr D)$ is isomorphic to $B$ by the PBW basis. Then the result follows from \cite[Lemma-Definition 1.1]{NsubHA}.
\newline

\noindent
\textbf{Proof of Theorem \ref{C0:thmA} (c).} Let $a\in \Prim\left(u(\mathscr D)\right)$. By previous lemma, we can write
\begin{align*}
a=b+\sum_{i=0}^{n-1}\mu_i\ID^{p^i},
\end{align*}
for some $b\in A$ and $\mu_i\in k$. Note that 
\[
d^1(x^{p^i})=x^{p^i}\otimes 1+1\otimes x^{p^i}-\Delta(x^{p^i})=x^{p^i}\otimes 1+1\otimes x^{p^i}-(x\otimes 1+1\otimes x+\chi)^{p^i}=-\RT_z^i(\chi)
\]
by Lemma \ref{AppOperator}. Since $d^1(a)=0$, we have
\begin{align*}
d^1\left(b\right)=-\sum_{i=0}^{n-1}\mu_i d^1\left(\ID^{p^i}\right)=\sum_{i=0}^{n-1}\mu_i\RT_z^i(\chi).
\end{align*}
By passing to the cohomology $\HL^2(\Cobar A)$, we have $\sum_{i=0}^{n-1}\mu_i[\RT_z^i(\chi)]=0$. For one direction, suppose $\{[\RT_z^i(\chi)]|0\le i\le n-1\}$ are linearly independent in $\HL^2(\Cobar A)$. 
Then all $\mu_i=0$ and $a=b\in A$, which implies $\Prim\left(u(\mathscr D)\right)=\Prim(A)\simeq \mathfrak h$.
On the other hand, there are coefficients $\mu_i\in k$, not all zero, such that $\sum_{i=0}^{n-1}\mu_i\RT_z^i(\chi)=d^1(c)$ for some element $c\in A^+$. Direct computation shows that $\sum_{i=0}^{n-1}\mu_i\ID^{p^i}+c$ is primitive, which is certainly not in $A$. 

\section{Extensions of connected Hopf algebras}\label{S:Ext}
In this section, we consider Hopf algebra extensions described by \eqref{E:SES} in the introduction. We keep the same notations, where $A=u(\mathfrak h)$ with $\mathfrak h$ abelian and $B=k[z]/f(z)$ with primitive generator $z$ satisfying 
\[
f(z)=z^{p^n}+\lambda_{n-1}z^{p^{n-1}}+\cdots+\lambda_1z^p+\lambda_0z=0,\ \mbox{for}\ n\ge 1.
\]
We use $\mathfrak g$ to denote $\Prim(B)$. In particular if $\dim B=p$, we will prove that any Hopf algebra extension of $A$ by $B$ is given by some $\mathscr D$. First of all, we study the algebra structure of the extension. In fact, every Hopf algebra extension of $A$ by $B$ is \emph{cleft} \cite{normalbasis}, which means that there exists a convolution-invertible comodule map $\phi: B\to H$. We can assume $\phi$ preserves the units and counits and such $\phi$ is called a \emph{section}. Then as algebras, $H$ is isomorphic to the crossed product  $A\#_\sigma B$ \cite[Theorem 11]{DT86}, where the action $\rightharpoonup$ and the $2$-cocycle $\sigma$ are determined by
\begin{align*}
b\rightharpoonup a&=\sum \phi(b_1)a\phi^{-1}(b_2),\\
\sigma(b,c)&=\sum \phi(b_1)\phi(c_1)\phi^{-1}(b_2c_2)
\end{align*}
for all $b,c\in B$ and $a\in A$. Note that $\sigma$ satisfies the cocycle condition described in \cite[Lemma 10]{DT86}.

\begin{lem}\label{C3:cleft}
Let $H$ be a $B$-cleft extension over $A$, and $\phi: B\to H$ a section. Then we have
\begin{itemize}
\item[(a)] $z\rightharpoonup a=[\phi(z),a]$,
\item[(b)] $z\rightharpoonup f(\phi(z))=0$,
\item[(c)] $f(\phi(z))\in A^+$
\end{itemize}
for all $a\in A$ and $f(\phi(z))=\phi(z)^{p^n}+\cdots +\lambda_1\phi(z)^p+\lambda_0\phi(z)$.
\end{lem}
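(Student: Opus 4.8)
The three claims follow in sequence from the cleft-extension structure, where $H \cong A\#_\sigma B$ as algebras and $\phi:B\to H$ is a unit- and counit-preserving section. For part (a), the plan is to use directly the formula $b\rightharpoonup a=\sum \phi(b_1)a\phi^{-1}(b_2)$ recorded just before the lemma, specialized to $b=z$. Since $z$ is primitive in $B$, we have $\Delta(z)=z\otimes 1+1\otimes z$, so the sum collapses to $\phi(z)a\phi^{-1}(1)+\phi(1)a\phi^{-1}(z)$. Using $\phi(1)=1$ (unit-preserving) and the convolution identities $\phi^{-1}(1)=1$ and $\phi(z)+\phi^{-1}(z)=0$ (which follow from $\phi^{-1}*\phi=u\e$ applied to the primitive $z$), this reduces to $\phi(z)a-a\phi(z)=[\phi(z),a]$, giving (a).

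For part (b), the cleanest route is to observe that $z\rightharpoonup(-)$ acts on $A$ as the inner derivation $[\phi(z),-]$ by part (a), and then to argue that $f(\phi(z))$ is $\rightharpoonup$-invariant. I would compute $z\rightharpoonup f(\phi(z))=[\phi(z),f(\phi(z))]$, if I can first justify that part (a)'s formula extends to this element; more robustly, I would use that $\rightharpoonup$ makes $A$ a $B$-module algebra and that $\rho_z=z\rightharpoonup(-)$ satisfies the algebraic-representation identities of Definition \ref{D:AR}. In particular $\rho_z$ is a restricted derivation, so $\rho_z\circ f(\rho_z)=\rho_{f(z)}=\rho_0=0$ by the same computation as in Proposition \ref{Maps}(c), using $f(z)=0$ in $B$. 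The element $f(\phi(z))$ should be expressible so that applying $z\rightharpoonup(-)$ yields $\rho_{f(z)}$ applied to the relevant data, hence vanishes.

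For part (c), the plan is to push $f(\phi(z))$ down to the quotient $B$ via the projection $\pi:H\to B$. Since $\phi$ is a section, $\pi\circ\phi=\mathrm{id}_B$, so $\pi(f(\phi(z)))=f(\pi(\phi(z)))=f(z)=0$ in $B$. Because $A=\ker\pi$ in the extension \eqref{E:SES}, this shows $f(\phi(z))\in A$; combining with $\e(f(\phi(z)))=f(\e\phi(z))=f(0)=0$ (as $\phi$ is counit-preserving and $f$ has zero constant term) gives $f(\phi(z))\in A^+$.

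The main obstacle will be part (b): the delicate point is that part (a) only asserts $z\rightharpoonup a=[\phi(z),a]$ for $a\in A$, whereas $f(\phi(z))$ is a priori an element of $H$, and I must first secure (via part (c), or directly) that it lies in $A$ before the inner-derivation description of $\rho_z$ applies and before I can invoke the restricted-derivation vanishing $\rho_{f(z)}=0$. I would therefore likely prove (c) before (b), or interleave them, so that the identity $z\rightharpoonup f(\phi(z))=\rho_z(f(\phi(z)))$ is legitimate.
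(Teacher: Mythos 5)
Your argument for (c) has a genuine gap: you conclude $f(\phi(z))\in A$ from $\pi(f(\phi(z)))=0$ by asserting that $A=\ker\pi$, but this is false. In the extension \eqref{E:SES} the kernel of $\pi$ is the ideal $A^+H$, which properly contains $A^+$ and does not contain $1$, so it cannot equal the subalgebra $A$ (a dimension count, $\dim\ker\pi=\dim H-\dim B$ versus $\dim A=\dim H/\dim B$, also rules this out). What identifies $A$ inside $H$ is not $\ker\pi$ but the coinvariant subalgebra $H^{coB}=\{h\in H\mid (1\otimes\pi)\Delta(h)=h\otimes 1\}$. Accordingly, the paper proves (c) by showing that $B$ coacts trivially on $f(\phi(z))$: since $(1\otimes\pi)\Delta$ is an algebra map and $\phi$ is a $B$-comodule map, one gets $(1\otimes\pi)\Delta(f(\phi(z)))=f\bigl(\phi(z)\otimes 1+1\otimes z\bigr)=f(\phi(z))\otimes 1+1\otimes f(z)=f(\phi(z))\otimes 1$, where the middle equality uses that $f$ is a $p$-polynomial evaluated at two commuting elements and the last uses $f(z)=0$ in $B$. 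Your counit step, $\epsilon(f(\phi(z)))=f(\epsilon(\phi(z)))=f(0)=0$, is fine and upgrades the conclusion to $f(\phi(z))\in A^+$.

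Part (a) is correct and is the intended short computation. For (b) you overcomplicate matters: once (c) places $f(\phi(z))$ in $A$, part (a) gives $z\rightharpoonup f(\phi(z))=[\phi(z),f(\phi(z))]$, and this bracket vanishes for the trivial reason that $f(\phi(z))$ is a polynomial in $\phi(z)$ and hence commutes with $\phi(z)$. No appeal to $\rho_{f(z)}=0$ or the restricted-derivation identities is needed, and your sketch of that route (``$f(\phi(z))$ should be expressible so that applying $z\rightharpoonup(-)$ yields $\rho_{f(z)}$'') conflates the operator $f(\rho_z)$ acting on $A$ with the element $f(\phi(z))$ of $H$; it would take real work to make precise. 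Your instinct to prove (c) before (b) is the right ordering.
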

\begin{proof}
(a) and (b) Easy. 

(c) It suffices to show $B$ coacts trivially on $f(\phi(z))$. Since $\phi$ is a comodule map,
\begin{align*}
(1\otimes \pi)\Delta[f(\phi(z))]=&f\left((1\otimes \pi)\Delta\phi(z)\right)=f\left((\phi\otimes 1)\Delta(z)\right)=f(\phi(z)\otimes 1+1\otimes z)\\
=&f(\phi(z))\otimes 1+1\otimes f(z)=f(\phi(z))\otimes 1.
\end{align*}
\end{proof}

The pair $(\delta,\Theta)\in \Der_k(A)\times A^+$ satisfying $\delta(\Theta)=0$ is called \emph{cleft data} for $B$ over $A$. The associated cleft extension is constructed as follows: it is generated by $A$ and $x$, subject to the relations:
\[
xa=ax+\delta(a),\ f(x)+\Theta=0,\ \text{for all}\ a\in A. 
\]
\begin{prop}\label{C3:cleftD}
Every $B$-cleft extension over $A$ is given by some cleft data $(\delta,\Theta)$. Moreover if the section $\phi$ satisfies $\phi(z^i)=\left(\phi(z)\right)^i$ for $0\le i\le p^n-1$, then $\sigma(z^{p^n-1},z)=\sigma(z,z^{p^n-1})=f\left(\phi(z)\right)$. 
\end{prop}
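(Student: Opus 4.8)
The plan is to treat the two assertions separately, both building on Lemma \ref{C3:cleft} and the crossed-product description $H\cong A\#_\sigma B$. For the first assertion I would read the cleft data off the section directly. Put $\delta:=(z\rightharpoonup-)\colon A\to A$; since $z$ is primitive and $A$ is a $B$-module algebra one has $z\rightharpoonup(ab)=(z\rightharpoonup a)b+a(z\rightharpoonup b)$, so $\delta\in\Der_k(A)$, and by Lemma \ref{C3:cleft}(a) it is the inner derivation $[\phi(z),-]$. Put $\Theta:=-f(\phi(z))$; this lies in $A^+$ by Lemma \ref{C3:cleft}(c), and $\delta(\Theta)=-[\phi(z),f(\phi(z))]=0$ since $f(\phi(z))$ is a polynomial in $\phi(z)$ (equivalently by Lemma \ref{C3:cleft}(b)). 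Hence $(\delta,\Theta)$ is cleft data, and the assignment $a\mapsto a$, $x\mapsto\phi(z)$ respects both relations $xa=ax+\delta(a)$ and $f(x)+\Theta=0$, defining an algebra map $\psi$ from the cleft extension $E(\delta,\Theta)$ associated with $(\delta,\Theta)$ to $H$.

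To see $\psi$ is an isomorphism I would compare dimensions. The relations of $E(\delta,\Theta)$ move $A$ to the left and reduce $x^{p^n}$, so $E(\delta,\Theta)$ is spanned over $A$ by $\{x^i\mid 0\le i\le p^n-1\}$ and $\dim E(\delta,\Theta)\le p^n\dim A=\dim H$. For surjectivity, the crossed-product multiplication, with $\phi(b)=1\# b$, yields $\phi(b)\phi(c)=\sum\sigma(b_1,c_1)\phi(b_2c_2)$; applied to $b=z$, $c=z^{i-1}$ and using the normalization $\sigma(1,1)=1$, this gives $\phi(z)\phi(z^{i-1})=\phi(z^i)+\sum_{j<i}c_{ij}\phi(z^j)$ with $c_{ij}\in A$ and all but the leading term of strictly smaller $z$-degree. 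An induction then places every $\phi(z^i)$, hence the entire left $A$-basis $\{\phi(z^i)\}_{0\le i\le p^n-1}$ of $H$, inside the subalgebra generated by $A$ and $\phi(z)$. Surjectivity together with the dimension bound forces $\psi$ to be an isomorphism, proving the first assertion.

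For the second assertion I would again use the identity $\phi(b)\phi(c)=\sum\sigma(b_1,c_1)\phi(b_2c_2)$. The key step is the vanishing lemma: under the hypothesis $\phi(z^i)=\phi(z)^i$ for $0\le i\le p^n-1$, one has $\sigma(z^a,z^b)=0$ whenever $1\le a+b\le p^n-1$. I would prove this by induction on $a+b$: expanding $\phi(z^a)\phi(z^b)=\sum_{i,l}\binom{a}{i}\binom{b}{l}\sigma(z^i,z^l)\phi(z^{a+b-i-l})$, the summand $i=l=0$ contributes $\phi(z^{a+b})$, the summand $(i,l)=(a,b)$ contributes $\sigma(z^a,z^b)$, and every intermediate summand (for which $1\le i+l\le a+b-1$) vanishes by induction; since the left-hand side equals $\phi(z)^{a+b}=\phi(z^{a+b})$ by hypothesis, we conclude $\sigma(z^a,z^b)=0$. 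Then from $\phi(z)^{p^n}=\phi(z)\phi(z^{p^n-1})=\sum\sigma(z_1,(z^{p^n-1})_1)\phi(z_2(z^{p^n-1})_2)$, splitting $\Delta(z)=z\otimes1+1\otimes z$ and invoking the vanishing lemma kills all terms except $\sigma(z,z^{p^n-1})$ (from $z_1=z$, $(z^{p^n-1})_1=z^{p^n-1}$) and $\phi(z^{p^n})$ (from $z_1=1$). As $z^{p^n}=-\sum_{i=0}^{n-1}\lambda_iz^{p^i}$ in $B$ and $\phi(z^{p^i})=\phi(z)^{p^i}$, this reads $\phi(z)^{p^n}=\sigma(z,z^{p^n-1})-\sum_i\lambda_i\phi(z)^{p^i}$, i.e.\ $\sigma(z,z^{p^n-1})=f(\phi(z))$; the symmetric computation starting from $\phi(z^{p^n-1})\phi(z)$ gives $\sigma(z^{p^n-1},z)=f(\phi(z))$.

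I expect the vanishing lemma to be the main obstacle: it is exactly what makes the coproduct sums over $\Delta(z^{p^n-1})$ collapse to a single surviving term, and it is the only place where the hypothesis $\phi(z^i)=\phi(z)^i$ is genuinely needed. The dimension-and-generation bookkeeping in the first assertion is routine by comparison, although one must still check carefully that the triangular relation $\phi(z)^i=\phi(z^i)+\text{(lower)}$ indeed makes $\{\phi(z^i)\}_{0\le i\le p^n-1}$ a free left $A$-basis of $H$.
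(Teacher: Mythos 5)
Your proof is correct, and the two halves compare differently with the paper's argument. For the first assertion you and the paper do essentially the same thing: read $(\delta,\Theta)=\bigl([\phi(z),-],\,-f(\phi(z))\bigr)$ off the section via Lemma \ref{C3:cleft}. The paper then turns around and verifies that any algebra presented by cleft data carries a $B$-comodule algebra structure with an explicit section $\phi(z^i)=x^i$, $\phi^{-1}(z^i)=(-1)^ix^i$ (leaning on the PBW basis of Lemma \ref{basis}), whereas you instead build the comparison map $E(\delta,\Theta)\to H$ and prove it is an isomorphism by a dimension bound plus a generation argument; your generation step, deriving $\phi(z^i)=\phi(z)\phi(z^{i-1})-(\text{lower }A\text{-combinations})$ from $\phi(b)\phi(c)=\sum\sigma(b_1,c_1)\phi(b_2c_2)$, is a correct and slightly more self-contained way to see that $H$ is generated by $A$ and $\phi(z)$. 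For the second assertion your route is genuinely different: the paper substitutes the explicit convolution inverse $\phi^{-1}(z^i)=(-1)^i\phi(z)^i$ into the defining formula $\sigma(b,c)=\sum\phi(b_1)\phi(c_1)\phi^{-1}(b_2c_2)$ and evaluates a single alternating binomial sum, while you never compute $\phi^{-1}$ at all, instead establishing the vanishing lemma $\sigma(z^a,z^b)=0$ for $1\le a+b\le p^n-1$ by induction and then isolating $\sigma(z,z^{p^n-1})$ as the unique surviving term in $\phi(z)\phi(z^{p^n-1})=\sigma(z,z^{p^n-1})+\phi(z^{p^n})$. Your approach buys more structural information (all intermediate cocycle values vanish, not just the top one) and sidesteps the formula for $\phi^{-1}$, at the cost of an extra induction; the paper's is shorter once one accepts that under the hypothesis $\phi(z^i)=\phi(z)^i$ the convolution inverse is forced to be $(-1)^i\phi(z)^i$. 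Both computations correctly use the normalization $\sigma(1,b)=\sigma(b,1)=\e(b)1$ and the relation $z^{p^n}=-\sum_i\lambda_iz^{p^i}$ in $B$, and both reach $f(\phi(z))$.
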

\begin{proof}
Write $\phi(z)=x$. By Lemma \ref{C3:cleft}, one sees that $\delta(a)=[x,a]$ and $\Theta=-f(x)$. Now suppose $H$ is given by some cleft data. We want to show that $H$ is a $B$-cleft extension over $A$. First of all, $H$ is a free left $A$-module with basis $\{x^i|0\le i\le p^n-1\}$, as seen in the proof of Lemma \ref{basis}. Direct computation shows that $H$ has a right $B$-comodule algebra structure determined by $x\mapsto x\otimes 1+1\otimes z$, whose coinvariant ring is $A$. Moreover, the section $\phi$ is given by $\phi(z^i)=x^i$ with convolution-inverse $\phi^{-1}(z^i)=(-)^ix^i$ for $0\le i\le p^n-1$. Now suppose $\phi(z^i)=x^i$ for $0\le i\le p^n-1$. By definition, we have
\begin{align*}
\sigma(z^{p^n-1},z)=&\sum_{i=0}^{p^n-1}{p^n-1\choose i}\left(\phi\left(z^i\right)\phi(z)\phi^{-1}(z^{p^n-1-i})+\phi\left( z^i\right)\phi(1)\phi^{-1}(z^{p^n-i})\right)\\
=&\sum_{i=0}^{p^n-1}{p^n-1\choose i}(-1)^{p^n-1-i}x^{p^n}+\sum_{i=1}^{p^n-1}{p^n-1\choose i}(-1)^{p^n-i} x^{p^n}+\phi^{-1}(z^{p^n})\\
=&(-1)^{p^n-1}x^{p^n}-\phi^{-1}\left(\lambda_{n-1}z^{p^{n-1}}+\cdots+\lambda_0z\right)\\
=&f(x).
\end{align*}
Since $A, B$ are commutative, we have $\sigma(z^{p^n-1},z)=\sigma(z,z^{p^n-1})$.  
\end{proof}

Next, we try to recover the Hopf structure of the extension. In \cite[Definition and Lemma 1.3]{Ho}, all abelian matched pairs $(H,K,\rightharpoonup,\varrho)$ form a category $\AMP$: morphisms are 
\[(\alpha,\beta): (H,K,\rightharpoonup,\varrho)\to (H',K',\rightharpoonup',\varrho'),
\]
where $\alpha: H\to H'$ is a $K$-comodule map and $\beta: K'\to K$ is an $H$-module map. We define the category $\CR$: objects are all types $T=(\mathfrak g, \mathfrak h,\rho)$ such that $\mathfrak g,\mathfrak h$ are finite abelian restricted Lie algebras with $\dim \mathfrak g=1$, and $\rho$ is an algebraic representation of $\mathfrak g$ on $\mathfrak h$; morphisms are 
\[(\alpha,\beta): (\mathfrak g,\mathfrak h,\rho)\to (\mathfrak g',\mathfrak h',\rho'),
\]
where $\alpha: \mathfrak g\to \mathfrak g'$ and $\beta: \mathfrak h'\to \mathfrak h$ are restricted Lie algebra maps satisfying the following commutative diagram
\begin{align}\label{C3:CommD}
\xymatrix{
\mathfrak g\otimes \mathfrak h\ar[rr]^-{\rho}&&\mathfrak h\\
\mathfrak g\otimes \mathfrak h'\ar[r]_-{\alpha\otimes \Id}\ar[u]^-{\Id\otimes \beta} &\mathfrak g'\otimes \mathfrak h'\ar[r]_-{\rho'}&\mathfrak h'\ar[u]_-{\beta}.
}
\end{align}

\begin{lem}\label{C4:Apair}
Suppose $\dim B=p$. Then for any associated abelian pair $(\rightharpoonup,\varrho)$, we have:
\begin{itemize}
\item[(a)] $\varrho$ is trivial.
\item[(b)] $\rightharpoonup$ is induced by an algebraic representation $\rho$ of $\mathfrak g$ on $\mathfrak h$.
\end{itemize}
\end{lem}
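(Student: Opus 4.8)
\textbf{Proof proposal for Lemma \ref{C4:Apair}.}

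The plan is to analyze the two structure maps of the abelian matched pair $(\rightharpoonup,\varrho)$ separately, exploiting that $B=u(\mathfrak g)$ is cocommutative and generated by the single primitive $z$, while $A=u(\mathfrak h)$ is commutative. For part (a), I would start from the fact that the coaction $\varrho\colon B\to B\otimes A$ is a coalgebra and algebra map compatible with the Hopf structure. Since $\dim B=p$, the algebra $B$ is generated by $z$, so $\varrho$ is determined by $\varrho(z)$. Because $\varrho$ is required to make $B$ a right $A$-comodule with $B^*$ central (the extension is cocentral), and $z$ is primitive, the coaction on $z$ must take the form $\varrho(z)=z\otimes 1+\text{(terms in $B^+\otimes A^+$)}$; I would argue that the primitivity of $z$ together with the counit and coassociativity axioms for a comodule force the correction term to vanish. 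Concretely, writing $\varrho(z)=z\otimes 1+1\otimes \theta+(\text{higher})$ and imposing that $\varrho$ is an algebra map, one compares $\varrho(z^p)=\varrho(z)^p$ against the $p$-power in $B$ (which is linear in $z$ modulo $f$) to conclude that $\theta=0$ and that no higher terms survive. This gives triviality of $\varrho$.

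For part (b), once the coaction is trivial, the compatibility axioms of an abelian matched pair \cite[Definition 2.2]{MK2} collapse, and $\rightharpoonup\colon B\otimes A\to A$ is exactly a $B$-module algebra structure on $A$ in the sense of Definition \ref{D:HAModule}. Since $A=u(\mathfrak h)$ and $B=u(\mathfrak g)$, Proposition \ref{eqdef} applies directly: any $u(\mathfrak g)$-module Hopf algebra structure on $u(\mathfrak h)$ corresponds to a unique algebraic representation $\rho$ of $\mathfrak g$ on $\mathfrak h$. Thus I would verify that the matched-pair axioms, with $\varrho$ trivial, are precisely conditions (a)--(d) of Definition \ref{D:HAModule} for the action of $B$ on $A$ as Hopf algebras, and then invoke Proposition \ref{eqdef} to extract $\rho$ and set $\rho_x(a)=x\rightharpoonup a$ for $x\in\mathfrak g$, $a\in\mathfrak h$.

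The main obstacle I anticipate is rigorously pinning down the triviality of $\varrho$ in part (a): the bookkeeping of the comodule and compatibility conditions must be handled carefully to rule out nonzero correction terms, especially since in positive characteristic the interaction between the $p$-power map and the coaction is delicate. The cleanest route is likely to use the explicit relation $f(z)=0$ in $B$ together with the fact that $\varrho$ respects this relation and the $p$-restricted structure, so that evaluating $\varrho$ on $f(z)$ and separating degrees in the $B\otimes A$ grading forces the coaction terms lying in $B^+\otimes A^+$ to be zero one degree at a time. Part (b) should then follow formally from Proposition \ref{eqdef} with little extra effort.
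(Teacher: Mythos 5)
Your part (b) is essentially the paper's argument: once $\varrho$ is trivial, the matched-pair axioms say exactly that $\rightharpoonup$ makes $A=u(\mathfrak h)$ into a $B$-module Hopf algebra in the sense of Definition \ref{D:HAModule}, and Proposition \ref{eqdef} then produces the algebraic representation $\rho$. No issues there.

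Part (a) has the right overall shape (reduce to $\varrho(z)$ and kill the correction terms), but the mechanism you propose does not work as stated. First, in an abelian matched pair the coaction $\varrho\colon B\to B\otimes A$ is not an algebra map; it satisfies only a twisted multiplicativity involving $\rightharpoonup$, so the identity $\varrho(z^p)=\varrho(z)^p$ you want to exploit is not available, and even if it were, comparing $\varrho(f(z))$ with $f(\varrho(z))$ constrains only the image of the single relation, not the shape of $\varrho(z)$ itself. Second, your sketch never uses the hypotheses that actually force triviality. What does the work in the paper is: (i) the comodule-coalgebra compatibility of $\varrho$ with $\Delta_B$, which shows $\Prim(B)$ is $A$-costable, so that $\varrho(z)=z\otimes g$ with $g$ necessarily group-like by comodule coassociativity and the counit axiom; (ii) the connectedness of $A$, which forces $g=1$ --- without connectedness the coaction genuinely can be nontrivial, so any correct proof must invoke it, and yours does not; and (iii) the matched-pair fact that the coinvariants $B^{coA}$ form a subalgebra, which is the correct substitute for your claim that ``$\varrho$ is determined by $\varrho(z)$'' and is what lets you pass from $\varrho(z)=z\otimes 1$ to triviality on all of $B$. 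Finally, be careful with the parenthetical appeal to cocentrality: triviality of $\varrho$ is essentially what cocentrality of the extension means, so taking it as an input would be circular --- the lemma is proving this for an arbitrary associated abelian matched pair.
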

\begin{proof}
(a) By definition, $(B,\varrho)$ is an $A$-comodule coalgebra. Then it is clear that $\Prim(B)$ is $A$-costable under $\varrho$. Since $\dim\mathfrak g=1$, one sees that $\varrho(z)=z\otimes g$ for some group-like element $g$ in $A$. Therefore, $g=1$ for $A$ is connected. Hence $A$ coacts trivially on $\mathfrak g$. Still by \cite[Definition 2.2]{MK2}, the coinvariant $B^{coA}$ is a subalgebra of $B$. Now $B$ is generated by $z$, hence the coaction $\varrho$ is trivial.

(b) Suppose $\varrho$ is trivial. By \cite[Definition 2.2]{MK2}, one sees that $\Delta(a\rightharpoonup t)=\sum (a_1\rightharpoonup t_1)\otimes (a_2\rightharpoonup t_2)$. Moreover, $\e(a\rightharpoonup t)=\e(a)\e(t)$ for all $a\in B$ and $t\in A$ by \cite[Lemma 1.2]{Tak}. Hence, $\rightharpoonup$ makes $A$ into a $B$-module Hopf algebra. The result follows from Proposition \ref{eqdef}.
\end{proof}

\begin{cor}\label{C4:subcat}
The category $\CR$ is a full subcategory of $\AMP$ arising from $B$-extensions over $A$ satisfying $\dim B=p$.
\end{cor}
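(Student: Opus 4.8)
The plan is to exhibit an explicit embedding functor $\Phi\colon \CR\to\AMP$ and to prove it is fully faithful with essential image exactly the abelian matched pairs coming from $B$-extensions over $A$ with $\dim B=p$. On objects I would send a type $T=(\mathfrak g,\mathfrak h,\rho)$ (so $\dim\mathfrak g=1$) to the abelian matched pair $\big(u(\mathfrak g),u(\mathfrak h),\rightharpoonup_\rho,\varrho_{\mathrm{triv}}\big)$, where $\rightharpoonup_\rho$ is the $u(\mathfrak g)$-module Hopf algebra structure on $u(\mathfrak h)$ corresponding to $\rho$ under Proposition \ref{eqdef} and $\varrho_{\mathrm{triv}}$ is the trivial coaction. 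Because the coaction is trivial, the matched-pair axioms of \cite{MK2,Ho} collapse to the single requirement that $u(\mathfrak h)$ be a $u(\mathfrak g)$-module Hopf algebra, which holds by construction; hence $\Phi(T)$ is a genuine object of $\AMP$.

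On morphisms I would send $(\alpha,\beta)\colon(\mathfrak g,\mathfrak h,\rho)\to(\mathfrak g',\mathfrak h',\rho')$ to $(u(\alpha),u(\beta))$, using the functoriality of the restricted enveloping algebra construction $u(-)$. Here $u(\alpha)\colon u(\mathfrak g)\to u(\mathfrak g')$ is automatically a $u(\mathfrak h)$-comodule map since both coactions are trivial, while $u(\beta)\colon u(\mathfrak h')\to u(\mathfrak h)$ is a $u(\mathfrak g)$-module map precisely because the square \eqref{C3:CommD} asserts the identity $\rho_x\circ\beta=\beta\circ\rho'_{\alpha(x)}$ on the generating space $\mathfrak h'$; this identity then extends to all of $u(\mathfrak h')$ because both sides are determined by their values on the generators. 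Functoriality of $\Phi$ is inherited from that of $u(-)$.

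For full faithfulness I would invoke the standard fact that for an (abelian) restricted Lie algebra one has $\Prim\big(u(\mathfrak g)\big)=\mathfrak g$ and that $u(\mathfrak g)$ is generated as an algebra by its primitives \cite{JA79}; consequently restriction to primitives gives a bijection between Hopf algebra maps $u(\mathfrak g)\to u(\mathfrak g')$ and restricted Lie algebra maps $\mathfrak g\to\mathfrak g'$. Given a morphism $(\bar\alpha,\bar\beta)\colon\Phi(T)\to\Phi(T')$ in $\AMP$, this bijection yields unique restricted Lie maps $\alpha,\beta$ with $\bar\alpha=u(\alpha)$ and $\bar\beta=u(\beta)$; the comodule condition on $\bar\alpha$ is vacuous, and the $u(\mathfrak g)$-module condition on $\bar\beta$ restricts on primitives to exactly \eqref{C3:CommD}, so $(\alpha,\beta)$ is a morphism in $\CR$. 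This produces the inverse bijection on Hom-sets, proving $\Phi$ fully faithful. It is injective on objects because $\mathfrak g=\Prim(B)$, $\mathfrak h=\Prim(A)$ and $\rho$, hence $T$, can be recovered from $\big(u(\mathfrak g),u(\mathfrak h),\rightharpoonup_\rho,\varrho_{\mathrm{triv}}\big)$.

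Finally, to pin down the essential image I would apply Lemma \ref{C4:Apair}: every abelian matched pair associated to a $B$-extension over $A$ with $\dim B=p$ has trivial coaction and action induced by an algebraic representation $\rho$ of $\mathfrak g=\Prim(B)$ on $\mathfrak h=\Prim(A)$, where $\dim\mathfrak g=1$; such a pair is precisely $\Phi(T)$ for $T=(\mathfrak g,\mathfrak h,\rho)\in\CR$. Thus $\Phi$ identifies $\CR$ with the full subcategory of $\AMP$ on these objects. I expect the main obstacle to lie in the fully faithful step: one must verify carefully that Hopf maps of restricted enveloping algebras are detected and recovered on primitives, so that every matched-pair morphism is forced to arise from $\CR$, and that the $H$-module condition in $\AMP$ corresponds to the diagram \eqref{C3:CommD} exactly rather than merely implying it. The triviality of the coaction is what renders the comodule half of the morphism data automatic and keeps this step manageable.
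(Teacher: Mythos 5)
Your proposal is correct and follows essentially the same route as the paper: the paper's (much terser) proof likewise combines Lemma \ref{C4:Apair} to identify the relevant matched pairs with types, and the isomorphism between the category of restricted Lie algebras and that of restricted enveloping algebras with Hopf algebra maps to get fullness and faithfulness, leaving the remaining verifications as ``direct to check.'' Your write-up simply supplies those checks explicitly.
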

\begin{proof}
By Lemma \ref{C4:Apair}, abelian matched pairs associated with $B$-extensions over $A$ are bijective to all types in $\CR$ with trivial coaction. Moreover, the category of restricted Lie algebras with restricted Lie algebra maps is isomorphic to the category of restricted universal enveloping algebras with Hopf algebra maps. Then it is direct to check $\CR$ is a full subcategory of $\AMP$.
\end{proof}
From now on, we let $\dim B=p$. Then every abelian matched pair $(B, A, \rightharpoonup,\varrho)$ is determined by some type $T=(\mathfrak g,\mathfrak h,\rho)$. We also say the $B$-extension over $A$ is of type $T$. Finally, we are able to prove the inverse statement of Theorem \ref{C0:thmA} (b) providing that $\dim B=p$.

\begin{prop}\label{C4:TExt}
Any Hopf algebra extension of type $T$ is given by some data $\mathscr D$.
\end{prop}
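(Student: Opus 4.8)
The plan is to read the datum $\mathscr D$ directly off the cleft structure of $H$, and then to verify its defining conditions using coassociativity together with cocentrality. First I would fix a section $\phi\colon B\to H$ as provided by Proposition \ref{C3:cleftD} and set $x:=\phi(z)$. Since $\dim B=p$ forces $n=1$ and $f(z)=z^p+\lambda_0z$, Lemma \ref{C4:Apair}(b) identifies the associated action with $\rho$, so Lemma \ref{C3:cleft} gives $[x,a]=\rho_z(a)$ for all $a\in A$, together with $\Theta:=-f(x)\in A^+$ and $\rho_z(\Theta)=0$. Thus the algebra relations \eqref{RHD} already hold in $H$ with $\ID$ replaced by $x$. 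The only remaining datum is the coalgebra one: set $\chi:=\overline{\Delta}(x)=\Delta(x)-x\otimes 1-1\otimes x$, the candidate making \eqref{CHD} hold.

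The key step, and the main obstacle, is to show $\chi\in A^+\otimes A^+$. Because $\phi$ is a right $B$-comodule map and $\pi(x)=z$ is primitive, one gets $(1\otimes\pi)\chi=0$; cocentrality, written as $\sum\pi(h_1)\otimes h_2=\sum\pi(h_2)\otimes h_1$, then yields $(\pi\otimes 1)\chi=0$ as well. Expanding coassociativity of $\Delta(x)$ produces the identity
\begin{equation}\label{E:Pcoassoc}
(\Delta\otimes 1)\chi-(1\otimes\Delta)\chi=1\otimes\chi-\chi\otimes 1 .
\end{equation}
Applying $\pi$ in the two outer tensor slots of \eqref{E:Pcoassoc} and using the two vanishing conditions shows that $\chi$ is coinvariant for both the left and the right $B$-coaction on $H$; since cocentrality forces these two coinvariant subspaces to coincide, both equalling $A=H^{\mathrm{co}\,B}$, we conclude $\chi\in A\otimes A$. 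A counit computation (using $\e(x)=0$) then removes the components with a $1$ in either slot, giving $\chi\in A^+\otimes A^+$.

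It remains to check that $\chi$ is a $z$-cocycle and that \eqref{E:CCon} holds. Reading \eqref{E:Pcoassoc} inside $\Cobar A$ and substituting into the formula for $\diff^2$ shows $\diff^2\chi=0$, so $\chi\in\cocy^2(\Cobar A)$, which is condition (a) of Definition \ref{D:Zco}. Next I would apply $\Delta$ to the relation $f(x)+\Theta=0$: Lemma \ref{AppOperator} with $m=1$ computes $\Delta(x^p)=x^p\otimes 1+1\otimes x^p+\RT_z^1(\chi)$, and collecting terms gives $\overline{\Delta}(\Theta)=-\ZO_z(\chi)$, that is $\ZO_z(\chi)=\diff^1(\Theta)$. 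This simultaneously exhibits $\ZO_z(\chi)$ as a coboundary (condition (b) of Definition \ref{D:Zco}) and verifies the second equation of \eqref{E:CCon}; together with $\rho_z(\Theta)=0$ this makes $\mathscr D=(T,z,\Theta,\chi)$ a valid datum.

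Finally, since every relation defining $u(\mathscr D)$ holds in $H$ under $\ID\mapsto x$ (the algebra relations \eqref{RHD} by the first paragraph, and the coproduct \eqref{CHD} by the definition of $\chi$), there is a Hopf algebra map $u(\mathscr D)\to H$ fixing $A$. By Lemma \ref{basis} both sides are free left $A$-modules on the bases $\{\ID^i\}$ and $\{x^i\}$ with $0\le i\le p-1$, which this map carries to one another, so it is an isomorphism. I expect the cocentrality argument of the second paragraph to be the only genuinely delicate point; the cocycle identities of the third paragraph are then formal consequences of \eqref{E:Pcoassoc} and Lemma \ref{AppOperator}.
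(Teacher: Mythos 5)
Your proposal is correct and follows the same overall skeleton as the paper's proof: extract the cleft data $(\rho_z,\Theta)$ from a section via Proposition \ref{C3:cleftD} and Lemmas \ref{C3:cleft}, \ref{C4:Apair}; set $\chi=\overline{\Delta}(x)$; derive $\ZO_z(\chi)=\diff^1(\Theta)$ by applying $\Delta$ to $f(x)+\Theta=0$ and invoking Lemma \ref{AppOperator}; and conclude by comparing free $A$-module bases. The one genuine difference is in the middle: where the paper disposes of the two coalgebra-theoretic facts by citation --- $\chi\in A^+\otimes A^+$ via Hofstetter's Proposition 3.7(b) together with the trivial coaction, and $\diff^2\chi=0$ via the proof of Zhuang's Theorem 7.5 --- you derive both directly from the comodule property of $\phi$, cocentrality, coassociativity of $\Delta(x)$, and the counit axiom. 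Your computations there are correct (in particular the identity $(\Delta\otimes 1)\chi-(1\otimes\Delta)\chi=1\otimes\chi-\chi\otimes 1$ does force both legs of $\chi$ into the coinvariants and then kills $\diff^2\chi$ after cancellation of the $1\otimes\chi-\chi\otimes 1$ terms), and this buys a self-contained argument at the cost of a few lines. The only point you should make explicit is why the extension is cocentral: this is not a hypothesis you are free to assume, but it does follow from Lemma \ref{C4:Apair}(a), since triviality of the coaction $\varrho$ is exactly the condition that $B^*$ be central in $H^*$, which gives the identity $\sum\pi(h_1)\otimes h_2=\sum\pi(h_2)\otimes h_1$ you use to obtain $(\pi\otimes 1)\chi=0$ and to identify the left and right coinvariant subspaces with $A$. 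With that one sentence added, your argument is complete.
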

\begin{proof}
Let $T=(\mathfrak g,\mathfrak h,\rightharpoonup)$. By Proposition \ref{C3:cleftD} and Lemma \ref{C4:Apair}, the algebra structure of the extension is given by some cleft data $(\rightharpoonup,\Theta)$ satisfying $z\rightharpoonup \Theta=0$. Regrading the coalgebra structure, it is clear from \cite[Proposition 3.7 (b)]{Ho} and the trivial coaction that $\Delta(x)=x\otimes 1+1\otimes x+\chi$ for some $\chi\in A^+\otimes A^+$. Moreover, $\chi$ is a $2$-cocycle in the cobar construction on $A$, as is seen from the proof of \cite[Theorem 7.5, P. 21]{Zh1}. Note that the algebra map $\Delta: H\to H\otimes H$ preserves the relation $f(x)=-\Theta$.
Then by Lemma \ref{AppOperator}, we have
\begin{align*}
-\Delta(\Theta)=&\Delta\left(f(x)\right)\\
=&f\left(x\otimes 1+1\otimes x+\chi \right)\\
=&f(x)\otimes 1+1\otimes f(x)+\RT_z^{1}(\chi)+\lambda \chi\\
=&\ZO_z(\chi)-(\Theta\otimes 1+1\otimes \Theta).
\end{align*}
It is equivalent to $\ZO_z(\chi)=d^1(\Theta)$. Set $\mathscr D=(T,z,\chi,\Theta)$. One sees that $H$ must be a quotient of $u(\mathscr D)$. Then the result follows from the dimension argument since $u(\mathscr D)$ is already an extension of $A$ by $B$.
\end{proof}

\begin{cor}\label{C4:ObjH}
Any Hopf algebra in $\CH$ is given by some data $\mathscr D$.
\end{cor}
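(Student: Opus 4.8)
The plan is to realize every $H\in\CH$ as a Hopf algebra extension of type $T$ and then quote Proposition \ref{C4:TExt}. Fix $H\in\CH(d)$ and set $\mathfrak h=\Prim(H)$, which by hypothesis is abelian of dimension $d$. The universal property of the restricted enveloping algebra yields a Hopf algebra map $u(\mathfrak h)\to H$ extending $\mathfrak h\hookrightarrow H$; since $u(\mathfrak h)$ is connected and this map is injective on primitives, it is injective (a coalgebra map out of a connected coalgebra that is injective on primitives is injective). I therefore regard $A:=u(\mathfrak h)$ as a commutative connected Hopf subalgebra of $H$ with $\dim A=p^{d}$ and $\Prim(A)=\mathfrak h$.

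Next I would prove that $A$ is normal in $H$. The crux is that $\Prim(H)$ is stable under the left and right adjoint actions: for $a\in\Prim(H)$ and $h\in H$, writing $\ad_l(h)(a)=\sum h_1 a S(h_2)$, a direct computation with the antipode axioms and the primitivity of $a$ gives $\Delta(\ad_l(h)(a))=\ad_l(h)(a)\otimes 1+1\otimes\ad_l(h)(a)$, so $\ad_l(h)(a)\in\Prim(H)$, and the right adjoint action is symmetric. Since each $\ad_l(h)$ satisfies $\ad_l(h)(ab)=\sum\ad_l(h_1)(a)\,\ad_l(h_2)(b)$ and $A$ is generated as an algebra by $\mathfrak h$, the adjoint-stability of $\mathfrak h$ propagates to all of $A$, giving $A^+H=HA^+$. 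By the Nichols--Zoeller freeness theorem $H$ is free over $A$, so the quotient Hopf algebra $B:=H/A^+H$ has dimension $p^{d+1}/p^{d}=p$ and fits into an exact sequence $1\to A\to H\to B\to 1$ with $A=H^{\mathrm{co}\,B}$. A connected Hopf algebra of dimension $p$ has one-dimensional primitive space and equals $u(\mathfrak g)$ with $\dim\mathfrak g=1$, so $\dim B=p$.

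With the extension secured, I would identify its type. Because $A$ is commutative and $B$ is cocommutative, the extension is abelian and hence arises from an abelian matched pair $(\rightharpoonup,\varrho)$ in the sense of \cite{Ho,Ma1}. Lemma \ref{C4:Apair}, which applies exactly when $\dim B=p$, shows that $\varrho$ is trivial (equivalently, the extension is cocentral) and that $\rightharpoonup$ is induced by an algebraic representation $\rho$ of $\mathfrak g$ on $\mathfrak h$. Thus $H$ is a Hopf algebra extension of $A$ by $B$ of type $T=(\mathfrak g,\mathfrak h,\rho)$, and Proposition \ref{C4:TExt} produces data $\mathscr D$ with $H\cong u(\mathscr D)$.

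The main obstacle is the normality of $A$, that is, the adjoint-stability of $\Prim(H)$ together with the resulting exactness $1\to A\to H\to B\to 1$ and the coinvariant identity $A=H^{\mathrm{co}\,B}$; once these and the Nichols--Zoeller dimension count are in place, the reduction to $\dim B=p$ and the passage to a type follow from Lemma \ref{C4:Apair}, and the remaining work is precisely Proposition \ref{C4:TExt}. I would also verify the bookkeeping that $\Prim(A)=\mathfrak h$ coincides with $\Prim(H)$, since this is what certifies the sequence as a genuine extension to which the matched-pair formalism and Proposition \ref{C4:TExt} apply.
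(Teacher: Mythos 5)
Your overall architecture matches the paper's: reduce to showing $A=u(\Prim(H))$ is normal in $H$, deduce that $B:=H/A^{+}H$ is a $p$-dimensional connected quotient, identify the type via Lemma \ref{C4:Apair}, and conclude by Proposition \ref{C4:TExt}. The problem is the step you yourself flag as the crux. Your claim that $\Prim(H)$ is stable under the adjoint action of all of $H$, justified by ``a direct computation with the antipode axioms,'' is false for non-cocommutative Hopf algebras. The computation actually gives
\begin{align*}
\Delta\bigl(\ad_l(h)(a)\bigr)=\ad_l(h)(a)\otimes 1+\sum h_1S(h_4)\otimes h_2\,a\,S(h_3),
\end{align*}
and the second summand collapses to $1\otimes\ad_l(h)(a)$ only when one may permute the Sweedler components, i.e.\ under cocommutativity. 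The Hopf algebras in $\CH$ are not cocommutative in general: for $A(\lambda)$ of Example \ref{C0:Exp} with $\lambda\neq 0$, taking $h=z$ one finds $\ad_l(z)(a)=[z,a]+a\sum\bigl(\chi'S(\chi'')-S(\chi')\chi''\bigr)$ where $\Delta(z)=z\otimes1+1\otimes z+\sum\chi'\otimes\chi''$; this lands in $A$ but not in $\Prim(H)$. So the adjoint action does \emph{not} preserve the primitive space, and your propagation argument has no valid base case.

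What the paper does instead is extract structural information before attempting normality: since $A$ has $p$-index one in $H$, the inclusion $A\subset H$ is level one, and by \cite[Lemma 4.1]{Wan} together with the argument of \cite[Theorem 7.5]{Zh1} the relevant step of the coradical filtration of $H$ is spanned by $A$ and a single element $x$ with $\Delta(x)-x\otimes1-1\otimes x\in A\otimes A$. For such an $x$ and $a\in\Prim(H)$ one computes $\Delta([x,a])=[x,a]\otimes1+1\otimes[x,a]$ because the correction term in $A\otimes A$ commutes with $a\otimes1+1\otimes a$ ($A$ being commutative); hence $[x,\Prim(H)]\subseteq\Prim(H)\subseteq A$, and normality follows from \cite[Lemma 4.2]{Wan}. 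In short, the normality is a consequence of the special generator $x$ supplied by the coradical filtration, not of a general adjoint-stability of primitives. You would need to supply this filtration argument (or an equivalent) to close the gap; the remainder of your proof, from the dimension count for $B$ through Lemma \ref{C4:Apair} and Proposition \ref{C4:TExt}, is sound and agrees with the paper.
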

\begin{proof}
It suffices to show $A:=u(\Prim(H))$ is normal in $H$. If so, the $p$-dimensional connected quotient $H/A^+H$ \cite[Proposition 2.2(3)]{Wan} must be isomorphic to some $B$. Then it follows from \cite[Lemma-Definition 1.1]{NsubHA}. Because $A$ has $p$-index one in $H$ \cite[Definition 2.3]{Wan}, it is clear that $A\subset H$ is a level one inclusion of some degree $n$. Moreover, the $n$-th coradical filtration of $H$ is spanned by $A_n$ and some other element $x$ by \cite[Lemma 4.1]{Wan}. As seen in the proof of \cite[Theorem 7.5, P. 21]{Zh1}, we know $\Delta(x)-x\otimes 1-x\otimes 1\in A\otimes A$. Direct computation shows that $[A,x]\subset \Prim(H)\subset A$ for $A$ is commutative. Hence the normality follows from \cite[Lemma 4.2]{Wan}. 
\end{proof}

Since our goal is to classify $B$-extensions over $A$, we give an explicit formula for all possible isomorphisms between them. Fix a type $T\in \CR$. Any $g$ in $\Aut(T)$ contains a pair of automorphisms of $\mathfrak g$ and $\mathfrak h$ satisfying the commutative diagram \eqref{C3:CommD}. We will keep the same notation $g$ for all these automorphisms. The following lemma is in the same spirit of \cite[Lemma 2.19]{Ma1} regarding a direct method approach.

\begin{lem}\label{C4:IsomMap}
Let $\mathscr D',\mathscr D$ be two data for $B$-extensions over $A$ of a fixed type $T$.
\begin{itemize}
\item[(a)] Let $F: u(\mathscr D')\to u(\mathscr D)$ be a Hopf algebra isomorphism satisfying $F(A)\subseteq A$. Then there is a unique pair $(t,g)\in A^+\times \Aut(T)$ such that
\begin{align}\label{C3:Isomap}
F(a)=g^{-1}(a),\ F(x')=\gamma(x+t)
\end{align}
for all $a\in A$ and $\gamma\in k^\times$ is the scalar such that $g(z)=\gamma z$.
\item[(b)] Let $(t,g)\in A^+\times \Aut(T)$. Then the map $F: u(\mathscr D')\to u(\mathscr D)$ determined by \eqref{C3:Isomap} is well-defined, if and only if
\begin{align}\label{C3:IsoCon}
\ZO_z(t)=\Theta-\gamma^{-p}g^{-1}(\Theta'),\ d^1(t)=\chi-\gamma^{-1}\left(g^{-1}\otimes g^{-1}\right)(\chi').
\end{align}
In this case, $F$ is a Hopf algebra isomorphism.
\item[(c)] Let $F': u(\mathscr D'')\to u(\mathscr D'),\ F: u(\mathscr D')\to u(\mathscr D)$ be isomorphisms of Hopf algebras determined as in (a) by $(t',g'), (t,g)\in A^+\times \Aut(T)$, respectively. Then the composition $FF'$ is determined by $(t+\gamma^{-1}g^{-1}(t'), g'g)$.
\end{itemize}
\end{lem}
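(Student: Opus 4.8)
The plan is to handle the three parts in sequence: in (a) extract the data $(t,g)$ from an abstract isomorphism, in (b) run the converse and identify the two conditions, and in (c) simply compose.

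For (a), I would first restrict $F$ to $A$. Since $F$ is an isomorphism with $F(A)\subseteq A$ and both copies of $A$ have the same finite dimension, $F|_A$ is an algebra automorphism of $A=u(\mathfrak h)$, hence restricts to a restricted Lie automorphism of $\mathfrak h=\Prim(A)$ (Proposition \ref{eqdef}); I record it contravariantly, to match the orientation of morphisms in $\CR$, by setting $F|_A=g^{-1}$. Because $F(A^+\,u(\mathscr D'))\subseteq A^+\,u(\mathscr D)$, it descends to the quotients $u(\mathscr D')/A^+u(\mathscr D')\cong B\to B\cong u(\mathscr D)/A^+u(\mathscr D)$, and as $\dim\mathfrak g=1$ this induced map is $z\mapsto\gamma z$ for a unique $\gamma\in k^\times$, fixing the $\mathfrak g$-component $g(z)=\gamma z$. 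That the pair lies in $\Aut(T)$ follows because $F$ preserves the relation $[x',a]=\rho_z(a)$: unwinding it yields exactly the intertwining encoded by diagram \eqref{C3:CommD}. To pin down $F(x')$ I apply $\overline{\Delta}$; since $\overline{\Delta}(x')=\chi'\in A\otimes A$ and $F$ is a coalgebra map with $F(A)\subseteq A$, we get $\overline{\Delta}(F(x'))\in A\otimes A$, so Lemma \ref{pinA} (with $n=1$) places $F(x')$ in $A+kx$. Comparing images under $\pi$ forces the $x$-coefficient to be $\gamma$, and $\e(F(x'))=\e(x')=0$ forces the $A$-part into $A^+$; writing it as $\gamma t$ gives $F(x')=\gamma(x+t)$ with $t\in A^+$, unique by the PBW basis of Lemma \ref{basis}.

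For (b), given $(t,g)$ I would define $F$ by \eqref{C3:Isomap} and check it respects the two defining relations of $u(\mathscr D')$. The commutator relation is preserved precisely when $\gamma\,\rho_z\circ g^{-1}=g^{-1}\circ\rho_z$ on $A$, which is again \eqref{C3:CommD} and so holds automatically from $g\in\Aut(T)$. The power relation $f(x')+\Theta'=0$ is the source of the first equation of \eqref{C3:IsoCon}: applying $F$ and expanding $f(\gamma(x+t))$ with the single-variable Jacobson-type identity $(x+t)^p=x^p+\RT_z^1(t)$ (cf. Lemma \ref{AppOperator} and Proposition \ref{Maps}(b)), then substituting $x^p=-\lambda x-\Theta$, the free $x$-terms cancel and the remainder rearranges to $\ZO_z(t)=\Theta-\gamma^{-p}g^{-1}(\Theta')$. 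The coalgebra condition $\Delta F(x')=(F\otimes F)\Delta'(x')$ expands directly, and using $\overline{\Delta}(t)=-d^1(t)$ it collapses to $d^1(t)=\chi-\gamma^{-1}(g^{-1}\otimes g^{-1})(\chi')$. Once both relations hold, $F$ is a bialgebra map; it is surjective, since its image contains $A=g^{-1}(A)$ and $x=\gamma^{-1}F(x')-t$, hence bijective by equality of dimensions (Theorem \ref{C0:thmA}(a)) and automatically a Hopf isomorphism. Part (c) is then pure substitution: $FF'(a)=g^{-1}(g')^{-1}(a)=(g'g)^{-1}(a)$, while $FF'(x'')=\gamma'\gamma(x+t)+\gamma'g^{-1}(t')=\gamma\gamma'\big(x+t+\gamma^{-1}g^{-1}(t')\big)$, which identifies the pair $(t+\gamma^{-1}g^{-1}(t'),\,g'g)$ once one notes $(g'g)(z)=\gamma\gamma'z$.

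The hard part will be the cancellation in (b) producing the clean identity $\ZO_z(t)=\Theta-\gamma^{-p}g^{-1}(\Theta')$. The spurious $x$-terms vanish only because $g$ is a restricted automorphism of the one-dimensional $\mathfrak g$ with $z^{[p]}=-\lambda z$, which forces $\lambda(\gamma^p-\gamma)=0$; the same constraint is what lets the leftover coefficient $\lambda\gamma^{1-p}$ be reconciled with $\lambda$. Tracking this $\gamma$-dependence, together with the $g^{-1}$-versus-$g$ convention that makes the composition in (c) covariant, is the delicate bookkeeping, whereas the coalgebra computation and the comparison arguments of (a) are routine given Lemmas \ref{basis}, \ref{pinA} and \ref{AppOperator}.
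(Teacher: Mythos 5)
Your argument is correct and delivers exactly the content of the lemma; parts (b) and (c) agree with the paper's proof, which merely says the conditions are ``verified directly'' and that (c) is ``easy'', so your explicit expansion of $f(\gamma(x+t))$ via $(x+t)^p=x^p+\RT_z^1(t)$, the cancellation of the stray $x$-terms through $\lambda\gamma^p=\lambda\gamma$ (forced by $g|_{\mathfrak g}$ being a restricted automorphism), and the collapse of the coalgebra condition to $d^1(t)=\chi-\gamma^{-1}(g^{-1}\otimes g^{-1})(\chi')$ is precisely the intended computation. The one genuine divergence is in (a): the paper obtains the normal form $F(x')=\gamma(x+t)$ from the general theory of cleft extensions, comparing the two sections $F\phi'\overline{F^{-1}}$ and $\phi$ via Masuoka's lemma that any two sections differ by convolution with a unitary invertible map $\psi:B\to A$ and setting $t=\psi(z)$, while membership of $(\overline F,F^{-1}|_A)$ in $\Aut(T)$ is quoted from Hofstetter's description of the matched pair attached to an extension. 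You instead locate $F(x')$ inside $A+kx$ directly from Lemma \ref{pinA} (with $n=1$) and the induced map on the quotient $B$; this is more elementary and self-contained, at the cost of redoing by hand what the cited lemmas package. One cosmetic point: your check that $g$ intertwines $\rho$ (the diagram \eqref{C3:CommD}) is extracted from the relation $[F(x'),F(a)]=F(\rho_z(a))$, which presupposes knowing $F(x')\in kx+A$; so that step should follow, not precede, the application of Lemma \ref{pinA} and the identification of the leading coefficient with $\gamma$. All the pieces are present and the reordering is immediate, so this is not a gap.
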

\begin{proof}
(a) Consider the following commutative diagram:
\[
\xymatrix{
1\ar[rr]
&&A\ar[rr]\ar[d]_-{F|_A}
&&u(\mathscr D')\ar[rr]\ar[d]_-{F}
&&B\ar[rr]\ar[d]_-{\overline{F}}  \ar@/_/@{-->}[ll]_-{\phi'}
&& 1\\
1\ar[rr]
&&A\ar[rr]
&&u(\mathscr D)\ar[rr]
&&B\ar[rr]\ar@/_/@{-->}[ll]_-{\phi}
&&1.
}
\]
By the definition of abelian matched pair associated to a $B$-extension over $A$ \cite[Definition and Lemma 3.5]{Ho}, it is easy to check the morphism $\left(\overline{F},F^{-1}|_A\right)\in \Aut(T)$. Write $g=\left(\overline{F},F^{-1}|_A\right)$. Since $F\phi'\overline{F^{-1}}$ and $\phi$ are sections $B\to u(\mathscr D)$, by \cite[Lemma 1.1]{Ma1}, there is a unitary, invertible linear map $\psi: B\to A$ such that $F\phi'\overline{F^{-1}}=\psi*\phi$. Set $t=\psi(z)\in A$, and $\gamma\in k^\times$ such that $\overline{F}(z)=\gamma z$. Then one sees easily that \eqref{C3:Isomap} holds. The uniqueness is clear.

(b) It is straightforward that $F$ is a well-defined algebra map if and only if $F\left(f(x')\right)=-F(\Theta')$ and $F\left([x',a]\right)=F(z\rightharpoonup a)$ for all $a\in A$. Similarly, $F$ is a well-defined coalgebra map if and only if $(F\otimes F)\Delta(x')=\Delta\left(F(x')\right)$. Then the conditions are verified directly. Moreover, a bialgebra map $F: u(\mathscr D')\to u(\mathscr D)$ is automatically a Hopf algebra map by \cite[Proposition 4.2.5]{IntroHA}. Finally, $F$ is certainly invertible. 

(c) Easy.
\end{proof}

\section{Cohomology groups}\label{S:Grp}
In this section, we still denote $A=u(\mathfrak h)$ with $\mathfrak h$ abelian. We further assume that $B=u(\mathfrak g)$, where $\dim \mathfrak g=1$ with $z^p+\lambda z=0$ for some $\lambda\in k$. Note that any Hopf algebra extension of $A$ by $B$ is given by some data $\mathscr D=(T,z,\Theta,\chi)$. We define a cohomology group $\HG^2(B,A)$, which classifies all extensions of a fixed type $T$ up to equivalence. Also we define a subset $\HG^2(T)$ of $\HG^2(B,A)$ representing those extensions from $\CH$. As an application of Lemma \ref{C4:IsomMap}, we have an $\Aut(T)$-action on $\HG^2(T)$, whose orbits correspond to isomorphism classes in $\CH$ of the fixed type $T$.

\begin{definition}\label{D:HG}
In the set $A^+\times Z^2(\Cobar A)$, we define
\begin{itemize}
\item[(a)] a subset $\EL^2(B,A)$, where $(\Theta,\chi)\in \EL^2(B,A)$ if and only if
\[\rho_z(\Theta)=0,\ \ZO_z(\chi)=d^1(\Theta);\]
\item[(b)] an equivalence relation $\sim$, where $(\Theta',\chi')\sim (\Theta, \chi)$ if and only if there exists some $a\in A^+$ such that 
\[\Theta'-\Theta=\ZO_z(a),\ \chi'-\chi=d^1(a).\] 
\end{itemize}
\end{definition}

It is clear that $\sim$ is well-defined. Note that $A^+\times Z^2(\Cobar A)$ becomes an abelian group via componentwise addition. 
\begin{lem}
The subset $\EL^2(B,A)$ is a subgroup of $A^+\times Z^2(\Cobar A)$, and it is $\sim$-invariant.
\end{lem}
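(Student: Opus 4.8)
The plan is to realize $\EL^2(B,A)$ as the kernel of a homomorphism of abelian groups and to deduce $\sim$-invariance from the two identities in Proposition \ref{Maps}. The observation underlying both parts is that every operator occurring in Definition \ref{D:HG} is additive. Indeed $\rho_z$ is a $k$-linear derivation and $\diff^1$ is $k$-linear, while $\ZO_z$ is $\pfield$-linear: the only non-linear-looking ingredient is the $p$-th power map $\PM$, but on $(A^+)^{\otimes m}$ this is the Frobenius applied slot-wise (equivalently, the $p$-th power computed in the commutative algebra $A^{\otimes m}$, exactly as it is used in Lemma \ref{AppOperator}), which is additive because $A$ is commutative of characteristic $p$. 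Hence $\RT_z^m$ and $\ZO_z$, being $\pfield$-linear combinations of composites of $\PM$ with powers of the $k$-linear map $\rho_z$, are additive and degree-preserving.

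For the subgroup claim I would introduce the map
\[
\Phi\colon A^+\times \cocy^2(\Cobar A)\longrightarrow A\times\bigl(A^+\otimes A^+\bigr),\qquad \Phi(\Theta,\chi)=\bigl(\rho_z(\Theta),\ \ZO_z(\chi)-\diff^1(\Theta)\bigr).
\]
Since $\ZO_z$ is a degree-zero map, $\ZO_z(\chi)\in A^+\otimes A^+$, so $\Phi$ indeed lands in the stated target; by the additivity noted above $\Phi$ is a group homomorphism, and by Definition \ref{D:HG}(a) we have $\EL^2(B,A)=\ker\Phi$. In particular it contains $(0,0)$ and is closed under addition and negation, hence is a subgroup.

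For $\sim$-invariance, suppose $(\Theta,\chi)\in\EL^2(B,A)$ and $(\Theta',\chi')\sim(\Theta,\chi)$, witnessed by $a\in A^+$ with $\Theta'-\Theta=\ZO_z(a)$ and $\chi'-\chi=\diff^1(a)$. I would check the two defining conditions directly. First, $\rho_z(\Theta')=\rho_z(\Theta)+\rho_z(\ZO_z(a))=0$, using $\rho_z(\Theta)=0$ together with $\rho_z\circ\ZO_z=0$ from Proposition \ref{Maps}(c). Second, using that $\ZO_z$ commutes with the differential (Proposition \ref{Maps}(a)) and is additive,
\[
\ZO_z(\chi')=\ZO_z(\chi)+\ZO_z(\diff^1(a))=\diff^1(\Theta)+\diff^1(\ZO_z(a))=\diff^1(\Theta'),
\]
the middle equality invoking $\ZO_z(\chi)=\diff^1(\Theta)$. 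Thus $(\Theta',\chi')\in\EL^2(B,A)$. The same two identities show that each difference $\bigl(\ZO_z(a),\diff^1(a)\bigr)$ itself lies in $\EL^2(B,A)$, so the $\sim$-classes inside $\EL^2(B,A)$ are precisely cosets of a subgroup, which is what will make the quotient $\HG^2(B,A)$ a group.

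The computations are entirely formal once additivity is available, so the only point requiring care — and the one I would make explicit — is the additivity of $\ZO_z$, i.e. reading $\PM$ as a slot-wise Frobenius rather than as a power in the noncommutative tensor algebra $\Cobar A$; without this reading $\EL^2(B,A)$ would fail to be closed under addition.
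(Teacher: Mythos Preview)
Your proof is correct and follows essentially the same route as the paper. The paper simply declares the subgroup claim ``easy to check'' and then verifies $\sim$-invariance by the same two computations you give, invoking Proposition~\ref{Maps}(a) and (c); your kernel description of $\EL^2(B,A)$ and your explicit discussion of the additivity of $\ZO_z$ (via the commutativity of $A^{\otimes m}$) just spell out what the paper leaves implicit.
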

\begin{proof}
The subgroup part is easy to check. We only show that $\EL^2(B,A)$ is $\sim$-invariant.
Let $(\Theta,\chi)\in \EL^2(B,A)$. Suppose it is equivalent to some $(\Theta',\chi')\in A^+\times Z^2(\Cobar A)$.
We need to show $(\Theta',\chi')\in \EL^2(B,A)$. 
By Definition \ref{D:HG} (b), there is some $a\in A^+$ such that 
\[\Theta'=\Theta+\ZO_z(a),\ \chi'=\chi+d^1(a).\]
Thus, we have $\ZO_z(\chi')=\ZO_z(\chi)+\ZO_z(d^1(a))=d^1(\Theta+\ZO_z(a))=d^1(\Theta')$. By Proposition \ref{Maps} (c), $\rho_z(\Theta')=\rho_z(\Theta+\ZO_z(a))=\rho_z(\Theta)+\rho_z\circ\ZO_z(a)=0$. Done.
\end{proof}

In order to identify those extensions from $\CH$ in $\EL^2(B,A)$, we consider the complementary situation, when the extensions are exactly primitively generated. This suggests us to give the following definition.
\begin{definition}
In the set $A^+\times \cobo^2(\Cobar A)$, we define a subset $\LL^2(\mathfrak g,\mathfrak h)$, where $(\Theta,\chi)\in \LL^2(\mathfrak g,\mathfrak h)$ if and only if 
\[\rho_z(\Theta)=0,\ \ZO_z(\chi)=d^1(\Theta).\]
\end{definition}

We can view $\LL^2(\mathfrak g,\mathfrak h)$ as a subset of $\EL^2(B,A)$ via the inclusion $\cobo^2(\Cobar A)\subseteq Z^2(\Cobar A)$. It is straightforward to verify that it is also $\sim$-invariant. Hence we can form the following cohomology groups with respect to the equivalence relation.
\begin{definition}
We define
\begin{itemize}
\item[(a)] $\HG^2(B,A):=\EL^2(B,A)/\sim$,
\item[(b)] $\HG^2(\mathfrak g,\mathfrak h):=\LL^2(\mathfrak g,\mathfrak h)/\sim$,
\item[(c)] $\HG^2(T):=\HG^2(B,A)\setminus \HG^2(\mathfrak g,\mathfrak h)$.
\end{itemize}
\end{definition}

Note that $\HG^2(B,A)$ is an abelian group with subgroup $\HG^2(\mathfrak g,\mathfrak h)$, but $\HG^2(T)$ generally is not an abelian group.
Furthermore, in order to classify $\CH$ of a fixed type $T$, we need to take the automorphism group of $T$ into consideration. 
Choose any $g\in \Aut(T)$. There exists a group character $\gamma: \Aut(T)\to k^\times$ such that $\gamma_g$ is given by $g(z)=\gamma_g(z)$. Based on Lemma \ref{C4:IsomMap}, we define the $\Aut(T)$-action on $A^+\times \cocy^2(\Cobar A)$ by
\begin{align}\label{G-action}
g.(\Theta,\chi):=\big(\gamma_g^{p}g(\Theta),\gamma_g(g\otimes g)(\chi)\big).
\end{align}

We claim that the action is well-defined. However, choose any point $(\Theta,\chi)\in A^+\times \cocy^2(\Cobar A)$. Since $\Delta$ is an algebra map, we have $\overline{\Delta}g=(g\otimes g)\overline{\Delta}$ on $A^+$. Thus,
\[\diff^2(g.\chi)=(-\overline{\Delta}\otimes 1+1\otimes \overline{\Delta})\left(\gamma_g(g\otimes g)(\chi)\right)=\gamma_g(g\otimes g\otimes g)\left(\diff^2(\chi)\right)=0.\]
So $g.\chi\in \cocy^2(\Omega A)$, and certainly $g.\Theta\in A^+$. Then it is direct to check that it is a group action. Moreover, one sees that $\EL^2(B,A)$ and $\LL^2(\mathfrak g,\mathfrak h)$ are both $\Aut(T)$-invariant and $\sim$ is preserved under the action. As a consequence, we have an induced $\Aut(T)$-action on $\HG^2(T)=\HG^2(B,A)\setminus \HG^2(\mathfrak g,\mathfrak h)$.
\newline

\noindent
\textbf{Proof of Theorem \ref{C0:thmB}.} By Proposition \ref{C4:TExt}, any $B$-extension over $A$ is given by some data $\mathscr D$. Therefore, there is a bijection between $B$-extensions over $A$ and elements of $\EL^2(B,A)$ of a fixed type.

(a) Any restricted Lie algebra extension of $\mathfrak h$ by $\mathfrak g$ gives a primitively generated extension of $A$ by $B$ via its restricted universal enveloping algebra, and vice visa. Hence, equivalence classes of restricted Lie algebra extensions correspond to a subgroup of $\HG^2(B,A)$ characterized by primitively generated extensions. 
By Theorem \ref{C0:thmA} (c), the subgroup consists of all points $(\chi,\Theta)\in \EL^2(B,A)$ satisfying $[\chi]=0$ in $\HL^2(\Cobar A)$, or equivalently $\chi\in \cobo^2(\Cobar A)$. Then the result follows from the definition.

(b) It is a consequence of Lemma \ref{C4:IsomMap}, where we let $g=\Id$.
\newline

\noindent
\textbf{Proof of Theorem \ref{C0:thmC}.} By definition, any $B$-extension over $A$ belongs to $\CH$ if and only if it is not primitively generated. Hence, there is a bijection between $B$-extensions over $A$ belonging to $\CH$ and elements of $\EL^2(B,A)\setminus \LL^2(\mathfrak g,\mathfrak h)$ of a fixed type $T$. Then the first part is similar to Theorem \ref{C0:thmB} (b). Next, any Hopf algebra in $\CH$ is a $B$-extension over $A$ of some type $T$ by Corollary \ref{C4:ObjH}. Furthermore, any isomorphism between two objects of $\CH$ must induce a commutative diagram between their extensions for the Hopf algebra map preserves primitive elements. Therefore, two isomorphic Hopf algebras in $\CH$ have isomorphic types as seen in the proof of Lemma \ref{C4:IsomMap}. So it suffices to consider all non-isomorphic types to classify $\CH$.
\newline

Finally, we want to compare our cohomology group $\HG^2(B,A)$ with the cohomology group $\HL^2(B,A)$ defined in \cite{Ho, MK2}. Let $H$ be a $B$-extension over $A$ associated with an abelian matched pair $(\rightharpoonup,\varrho)$. Then we have a Sweedler $2$-cocycle and a dual $2$-cocycle (depending on the chosen $H\cong A\otimes B)$
\[\sigma: B\otimes B\to A,\ \tau: B\to A\otimes A\]
such that $\rightharpoonup$ and $\sigma$ construct on $H=A\#_\sigma B$ a $B$-crossed product which determines a right $B$-comodule algebra structure on $H$, while $\varrho$ and $\tau$ construct on $H=A \!^\tau\#B$ an $A$-crossed coproduct which determines a left $A$-module coalgebra structure on $H$. The pair $(\sigma,\tau)$ gives a total $2$-cocycle of a certain double complex, and $(H)\mapsto (\sigma,\tau)$ gives an isomorphism $\Opext(B,A,\rightharpoonup,\varrho)\cong \HL^2(B,A)$; see \cite[Section 2]{Ho}. 
\begin{prop}\label{C5:IsomGrp}
For $B$-extensions over $A$ of a fixed type, we have the following isomorphisms between cohomology groups:
\begin{itemize}
\item[(a)] $\HL^1(B,A)\cong \Aut\left(\mathfrak h\rtimes \mathfrak g\right)\cong \Ker (\ZO_z: \mathfrak h\to \mathfrak h)$.
\item[(b)] $\HL^2(B,A)\cong \HG^2(B,A)$.
\end{itemize}
\end{prop}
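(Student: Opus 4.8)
The plan is to prove both parts by exploiting a fact already built up in this paper, namely that $\HG^2(B,A)$ and the Hofstetter--Masuoka group $\HL^2(B,A)$ classify the same objects: equivalence classes of $B$-extensions over $A$ of the fixed type $T$. I would first produce a bijection of underlying sets and then promote it to a group isomorphism. For the comparison map in (b), recall that $\HL^2(B,A)\cong\Opext(B,A,\rightharpoonup,\varrho)$, the group of equivalence classes of cleft extensions under the Baer sum. Given such an extension $H$, Proposition \ref{C4:TExt} produces data $\mathscr D=(T,z,\Theta,\chi)$ with $H\cong u(\mathscr D)$, hence a pair $(\Theta,\chi)\in\EL^2(B,A)$; I would send $[H]$ to its class in $\HG^2(B,A)=\EL^2(B,A)/\!\sim$.

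To see this is well defined and injective I would invoke Lemma \ref{C4:IsomMap} with $g=\Id$: an equivalence of extensions is exactly a Hopf isomorphism inducing the identity on $A$ and on $B$, and Lemma \ref{C4:IsomMap}(b) says such a map (determined by some $t\in A^+$) exists if and only if $\ZO_z(t)=\Theta-\Theta'$ and $d^1(t)=\chi-\chi'$, which is precisely the relation $\sim$ of Definition \ref{D:HG}. Surjectivity is immediate from Proposition \ref{C4:TExt}, so the assignment is a set bijection, consistent with Theorem \ref{C0:thmB}(b).

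The main obstacle is matching the group laws, since the structure on $\HL^2(B,A)$ is the Baer sum while on $\HG^2(B,A)$ it is componentwise addition of $(\Theta,\chi)$. I would check these agree directly. Given extensions with data $(\Theta_1,\chi_1)$ and $(\Theta_2,\chi_2)$ and lifts $x_1,x_2$ of $z$, the Baer sum is obtained from $u(\mathscr D_1)\otimes u(\mathscr D_2)$ by pulling back along $\Delta\colon B\to B\otimes B$ and pushing out along $m\colon A\otimes A\to A$, and the new lift of $z$ becomes $x=x_1\otimes 1+1\otimes x_2$. Using Lemma \ref{AppOperator}, and the fact that the cross terms vanish because $A$ is commutative and $[x_1\otimes 1,\,1\otimes x_2]=0$, one computes $f(x)=f(x_1)\otimes 1+1\otimes f(x_2)$, which the pushout $m$ sends to $-(\Theta_1+\Theta_2)$; similarly $\Delta(x)-x\otimes 1-1\otimes x$ collapses to $\chi_1+\chi_2$. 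Hence the Baer sum carries data $(\Theta_1+\Theta_2,\chi_1+\chi_2)$, so the bijection is a group homomorphism and (b) follows. Alternatively, one can identify the reduced Hofstetter--Masuoka double complex directly: because the coaction $\varrho$ is trivial (Lemma \ref{C4:Apair}), its total differentials specialize to the map $a\mapsto(\ZO_z(a),d^1(a))$ that defines $\sim$ and to the cocycle condition $\ZO_z(\chi)=d^1(\Theta)$ cutting out $\EL^2(B,A)$.

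For part (a), I would realize $\HL^1(B,A)$ as the group of self-equivalences of the split extension, which is $u(\mathfrak h\rtimes\mathfrak g)$ and corresponds to the data $\mathscr D_0=(T,z,0,0)$ (the standard identification of $\HL^1$ with automorphisms of the trivial extension, \cite{Ho,Ma1}). Applying Lemma \ref{C4:IsomMap} with $\mathscr D'=\mathscr D=\mathscr D_0$ and $g=\Id$, such a self-equivalence is $x\mapsto x+t$ with $t\in A^+$ subject to $d^1(t)=0$ and $\ZO_z(t)=0$; the first condition forces $t$ to be primitive, i.e.\ $t\in\mathfrak h$, and Lemma \ref{C4:IsomMap}(c) shows the composition law is $t+t'$, so this group is $\Ker(\ZO_z\colon\mathfrak h\to\mathfrak h)$. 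The middle isomorphism $\cong\Aut(\mathfrak h\rtimes\mathfrak g)$ comes from functoriality of the restricted enveloping algebra: self-equivalences of $u(\mathfrak h\rtimes\mathfrak g)$ as a Hopf extension correspond to automorphisms of the restricted Lie algebra extension $0\to\mathfrak h\to\mathfrak h\rtimes\mathfrak g\to\mathfrak g\to 0$, namely $z\mapsto z+t$ with $(z+t)^{[p]}+\lambda(z+t)=0$, and by Lemma \ref{AppOperator} this restricted-power identity is exactly $\ZO_z(t)=0$. I expect the Baer-sum versus componentwise-addition comparison in (b) to be the genuinely delicate point, as it requires tracking the pushout--pullback construction through the explicit presentation \eqref{RHD}--\eqref{CHD}.
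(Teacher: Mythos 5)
Your proposal is correct, and its skeleton coincides with the paper's: both reduce the set-level bijection to Theorem \ref{C0:thmB}(b) (equivalently, to Proposition \ref{C4:TExt} plus Lemma \ref{C4:IsomMap} with $g=\Id$, which is exactly how the paper identifies $\sim$ with equivalence of extensions), and both then check that the group laws match. The one place you genuinely diverge is that additivity check in (b). The paper works on the level of total cocycles: it defines $F:\Opext(B,A)\to\HG^2(B,A)$ by $(\sigma,\tau)\mapsto(-\sigma(z^{p-1},z),\tau(z))$, uses Proposition \ref{C3:cleftD} to see this recovers $(\Theta,\chi)$, and then notes that the addition on $\Opext$ is the convolution product of cocycles, which evaluated at $(z^{p-1},z)$ and at $z$ becomes componentwise addition because $z$ is primitive and cocycles are normalized. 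You instead work on the level of extensions, computing the data of the Baer sum via the pullback--pushout with lift $x=x_1\otimes 1+1\otimes x_2$; your observation that $f(x)=f(x_1)\otimes 1+1\otimes f(x_2)$ (commuting tensor factors, $f$ a $p$-polynomial) and that the comultiplication defect collapses to $\chi_1+\chi_2$ is correct and gives the same conclusion. The paper's route is shorter once one accepts the convolution-product description of $\Opext$; yours is more self-contained and makes the "delicate point" you flag explicit. For (a) you likewise replace the paper's citation of Hofstetter's Definition 6.1 and the ensuing "direct check" by an application of Lemma \ref{C4:IsomMap} to the trivial data; this is fine. One small imprecision: Lemma \ref{AppOperator} as stated concerns $f\in A^+\otimes A^+$, so for the identity $(z+t)^{[p]}=z^{[p]}+t^{[p]}+\rho_z^{p-1}(t)$ you should really invoke its degree-one analogue (Proposition \ref{Maps}(b), i.e.\ \cite[Lemma A.1]{Wan}); the computation is the same and the conclusion $\ZO_z(t)=0$ is correct.
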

\begin{proof}
(a) By \cite[Proposition 6.5]{Ho}, we know $\HL^1(B,A)\cong \Aut(A\#B)\cong \Aut(\mathfrak h\rtimes \mathfrak g)$. By \cite[Definition 6.1]{Ho}, any $f\in \Aut(\mathfrak h\rtimes \mathfrak g)$ is given by some $a\in \mathfrak h$ such that $f(z)=z+a$. Then it is a direct check.

(b) Define map $F: \Opext(B,A)\to \HG^2(B,A)$ induced by $(\sigma,\tau)\mapsto \left(-\sigma(z^{p-1},z),\tau(z)\right)$. By Proposition \ref{C3:cleftD}, one sees that $\left(-\sigma(z^{p-1},z),\tau(z)\right)=(\Theta,\chi)$ in the data $\mathscr D$. Note that the addition in $\Opext(B,A)$ is induced by the convolution product. Since $z$ is primitive in $B$, it is straightforward to check that $F$ preserves the addition. It is clear that $F$ has a natural inverse. Then the result follows from Theorem \ref{C0:thmB} (b).
\end{proof}

\begin{ques}
In general case when $\dim B=p^n$ for some $n\ge 1$, how to present  $B$-extensions over $A$ in generators and relations and describe the cohomology group $\HL^2(B,A)$ concretely?
\end{ques}

\section{A geometric realization}\label{S:Realization}
In this section, let $k$ be a perfect field of characteristic $p>0$. We still denote $A=u(\mathfrak h)$ with $\mathfrak h$ abelian, and assume that $B=u(\mathfrak g)$, where $\dim \mathfrak g=1$ with $z^p+\lambda z=0$ for some $\lambda\in k$. Let $T=(\mathfrak g,\mathfrak h,\rho)$ be some type in $\CR$. In the following, we fix a basis $\{x_1,x_2,\cdots,x_d\}$ for $\mathfrak h$. It is clear that $\HL^1(\Cobar A)=\Prim(A)=\mathfrak h$.
We define a map $\Bock: \HL^1(\Cobar A)\to \HL^2(\Cobar A)$ as
\begin{align*}\label{Bockhom}
\Bock(x)=\left[\sum_{1\le i\le p-1}{p\choose i}/ p\ x^i\otimes x^{p-i}\right],
\end{align*}
for any $x\in \mathfrak h$. Since $A$ is commutative, one sees that $\Bock$ is semi-linear with respect to the Frobenius map of $k$. By abuse of language, we also consider $\Bock(x)=\sum_{i=1}^{p-1} {p\choose i}/p\ x^i\otimes x^{p-i}$ as an element in $(A^+)^2$ for any $x\in A^+$. Since $\HL^\bullet\left(\Cobar A\right)\cong \HL\HL^\bullet(A^*,k)\cong \HL^\bullet(C_p^d,k)$, we have
\begin{align*}
\HL^\bullet\left(\Cobar A \right)\cong
\begin{cases}
S(\mathfrak h) &p=2\\
\Lambda(\mathfrak h)\otimes S(\Bock(\mathfrak h))&p>2
\end{cases}
\end{align*}
where $S(\mathfrak h)$ and $\Lambda(\mathfrak h)$ are the polynomial and exterior algebras over $\mathfrak h$. In particular, we have the vector space isomorphism 
\begin{equation}\label{E:HG2}
\HL^2(\Cobar A)\cong
\begin{cases}
S^2(\mathfrak h)& p=2\\
\Lambda^2(\mathfrak h)\oplus \Bock(\mathfrak h)& p>2.
\end{cases}
\end{equation}
See references \cite[Section 4]{QAST} and \cite[Proposition 6.2]{Wan}.

Our approach is to view $\HG^2(T)$ as a subquotient space of $\mathbb A^d\times\mathbb A^{d(d+1)/2}$ via some embedding. We give the construction for characteristic $p>2$, and it is similar for $p=2$ by \eqref{E:HG2}. First of all, we embed the affine space $\mathbb A^d\times\mathbb A^{d(d+1)/2}$ into $\mathfrak h\times \cocy^2(\Cobar A)$ by sending any point 
\[P=(a_i,b_{jk},c_l)_{1\le j<k\le d,1\le i,l\le d}\] 
to some $(\Theta_P,\chi_P)$ such that 
\[
\Theta_P=\sum_{1\le i\le d}a_ix_i,\ \chi_P=\sum_{1\le i<j\le d} b_{ij}x_i\otimes x_j+\Bock\left(\sum_{1\le i\le d}c_ix_i\right).
\]
Note that $A=u(\mathfrak h)$ has a PBW basis $\{x_1^{\sigma_1}x_2^{\sigma_2}\cdots x_d^{\sigma_d}|0\le \sigma_1,\sigma_2,\cdots,\sigma_d\le p-1\}$, where we denote by $A_{\ge 2}$ the subspace of $A$ spanned by all these bases satisfying $\sigma_1+\sigma_2+\cdots+\sigma_d\ge 2$. Thus, there is a vector space decomposition $A^+=A_{\ge 2}\oplus \mathfrak h$. 

Next, we define the subset $S_T$ of $\mathbb A^d\times \mathbb A^{d(d+1)/2}$ such that $P\in S_T$ if and only if   
\begin{itemize}
\item[(a)] $\chi_P\not\in \cobo^2(\Cobar A)$,
\item[(b)] $\ZO_z(\chi_P)=d^1(a)$,
\item[(c)] $\rho_z(a+\Theta_P)=0$,
\end{itemize}
for some $a\in A_{\ge 2}$. The element $a\in A_{\ge 2}$ is uniquely determined by $\chi_P$, which is easy to see by taking the difference of two possible candidates. So we can write $a$ by $\Psi_P$ for any $P\in S_T$. 

Finally, identify $A^d$ with $\mathfrak h$, and let $\mathfrak h$ act on $A_k^d$ by addition via $\ZO_z$, i.e., $\Theta.x:=x+\ZO_z(\Theta)$ for all $x\in \mathbb A^d$ and $\Theta\in \mathfrak h$. The resulting quotient space is denoted by $\mathbb A^d/\mathfrak h$. The quotient map $\pi$ is defined to be 
\begin{align*}
\xymatrix{
\mathbb A^d\times \mathbb A^{d(d+1)/2}\ar@{->>}[r]^-{\pi}& (\mathbb A^{d}/\mathfrak h)\times \mathbb A^{d(d+1)/2}.
}
\end{align*}

\begin{prop}\label{P:Embed}
Every equivalence class in $\EL^2(B,A)\setminus \LL^2(\mathfrak g,\mathfrak h)$ can be represented by $(\Psi_P+\Theta_P,\chi_P)$ for some $P\in S_T$. Moreover, elements of $\mathcal H^2(T)$ are in $1$-$1$ correspondence with points in $\pi(S_T)$.
\end{prop}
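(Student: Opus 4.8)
The plan is to split the statement into a \emph{surjectivity} part (existence of a standard representative) and an \emph{injectivity} part (the precise description of when two standard representatives are $\sim$-equivalent), exploiting throughout the decomposition $A^+=A_{\ge 2}\oplus \mathfrak h$ together with the two facts $\Ker\left(\diff^1\colon A^+\to (A^+)^2\right)=\Prim(A)=\mathfrak h$ and $\ZO_z(\mathfrak h)\subseteq \mathfrak h$. The latter holds because in this section $\ZO_z=\PM+\rho_z^{p-1}+\lambda\,\Id$ by Proposition \ref{Maps}(b), and each summand preserves the linear part $\mathfrak h$ of $A^+$ (here $\PM$ restricts to the restricted $p$-power $x\mapsto x^{[p]}$).

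For the first assertion I would start from an arbitrary $(\Theta,\chi)\in \EL^2(B,A)\setminus \LL^2(\mathfrak g,\mathfrak h)$, so that $[\chi]\neq 0$ in $\HL^2(\Cobar A)$. By \eqref{E:HG2} the standard cocycles $\chi_P=\sum_{i<j}b_{ij}x_i\otimes x_j+\Bock(\sum c_i x_i)$ meet each cohomology class exactly once, so there is a unique $(b,c)$ with $[\chi_P]=[\chi]$; choosing $c_0\in A^+$ with $\chi-\chi_P=\diff^1(c_0)$ and replacing $(\Theta,\chi)$ by the $\sim$-equivalent pair $(\Theta-\ZO_z(c_0),\chi_P)$, which again lies in $\EL^2(B,A)$ by $\sim$-invariance. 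Writing $\Theta-\ZO_z(c_0)=\Theta''_{\ge 2}+\Theta''_1$ along $A^+=A_{\ge 2}\oplus\mathfrak h$ and using $\diff^1|_{\mathfrak h}=0$, the defining relation $\ZO_z(\chi_P)=\diff^1(\Theta-\ZO_z(c_0))=\diff^1(\Theta''_{\ge 2})$ together with uniqueness of the $A_{\ge 2}$-solution identifies $\Theta''_{\ge 2}=\Psi_P$; setting the first coordinate of $P$ to be $\Theta_P:=\Theta''_1$ yields the representative $(\Psi_P+\Theta_P,\chi_P)$. I would then verify $P\in S_T$: condition (a) is $[\chi_P]\neq 0$, condition (b) is the displayed relation, and condition (c) reads $\rho_z(\Psi_P+\Theta_P)=\rho_z(\Theta-\ZO_z(c_0))=0$, valid since the pair lies in $\EL^2(B,A)$.

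For the second assertion I would define $\Xi\colon S_T\to\HG^2(T)$ by $P\mapsto[(\Psi_P+\Theta_P,\chi_P)]$; the first part shows $\Xi$ is surjective, and it remains to prove that for $P,P'\in S_T$ one has $\Xi(P)=\Xi(P')$ if and only if $\pi(P)=\pi(P')$. The reverse implication is direct: if $P,P'$ share their $(b,c)$-coordinates and $\Theta_P-\Theta_{P'}=\ZO_z(w)$ with $w\in\mathfrak h$, then $\chi_P=\chi_{P'}$, $\Psi_P=\Psi_{P'}$, and $w$ witnesses the equivalence since $\diff^1(w)=0$. For the forward implication, an equivalence witnessed by $a\in A^+$ gives $\diff^1(a)=\chi_P-\chi_{P'}$, hence $[\chi_P]=[\chi_{P'}]$; uniqueness of standard representatives forces $\chi_P=\chi_{P'}$, so $\diff^1(a)=0$ and therefore $a\in\mathfrak h$. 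Comparing components of $\ZO_z(a)=(\Psi_P-\Psi_{P'})+(\Theta_P-\Theta_{P'})$ along $A^+=A_{\ge 2}\oplus\mathfrak h$, and using $\ZO_z(a)\in\mathfrak h$, forces $\Psi_P=\Psi_{P'}$ and $\Theta_P-\Theta_{P'}=\ZO_z(a)\in\ZO_z(\mathfrak h)$, i.e. $\pi(P)=\pi(P')$. Along the way I would record that $S_T$ is a union of $\mathfrak h$-orbits—this uses $\rho_z\circ\ZO_z=0$ from Proposition \ref{Maps}(c) to verify condition (c) along an orbit—so that $\pi(S_T)$ indexes exactly the fibers of $\Xi$ and $\Xi$ descends to a bijection $\pi(S_T)\leftrightarrow\HG^2(T)$.

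The step I expect to be the main obstacle is the forward implication of injectivity: upgrading a general equivalence, witnessed a priori by an arbitrary $a\in A^+$, to one witnessed by an element of $\mathfrak h$. Everything hinges on the simultaneous use of three facts—that distinct standard cocycles represent distinct classes (forcing $\diff^1(a)=0$), that $\Ker\diff^1=\mathfrak h$, and that $\ZO_z$ preserves $\mathfrak h$—which together pin $a$ down to $\mathfrak h$ and separate the $A_{\ge 2}$- and $\mathfrak h$-components cleanly.
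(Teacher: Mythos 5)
Your proposal is correct and follows essentially the same route as the paper: normalize the cocycle to the standard form afforded by \eqref{E:HG2}, split $\Theta$ along $A^+=A_{\ge 2}\oplus\mathfrak h$ to extract $\Psi_P$ and $\Theta_P$, and for injectivity use that equality of standard cocycles forces $\diff^1(a)=0$, hence $a\in\Prim(A)=\mathfrak h$, reducing the equivalence to the $\mathfrak h$-orbit condition detected by $\pi$. Your extra checks (that $\ZO_z$ preserves $\mathfrak h$ and that $S_T$ is a union of $\mathfrak h$-orbits via $\rho_z\circ\ZO_z=0$) are implicit in the paper's argument and only make it more explicit.
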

\begin{proof}
Let $(\Theta,\chi)\in \EL^2(B,A)\setminus \LL^2(\mathfrak g,\mathfrak h)$.
By definition, we know $\chi$ is a $z$-cocycle. Thus, we can write $\chi=\chi'+d^1(a)$ for some $a\in A^+$, where $\chi'=\sum_{1\le i<j\le d} b_{ij}x_i\otimes x_j+\Bock(\sum_{1\le i\le d}c_ix_i)$ according to \eqref{E:HG2}. Let $\Theta'=\Theta-\ZO_z(a)$. One sees that $(\Theta',\chi')\sim (\Theta,\chi)$, and $(\chi',\Theta')\in \EL^2(B,A)\setminus \LL^2(\mathfrak g,\mathfrak h)$. Now, we show that $(\Theta',\chi')$ is given by some point in $S_T$.
By the vector space decomposition $A^+=A_{\ge 2}\oplus \mathfrak h$, we can write $\Theta'=\Theta'_2+\Theta'_1$, where $\Theta'_2\in A_{\ge 2}$ and $\Theta'_1=\sum_{1\le i\le d} a_ix_i$. Let $P=(a_i,b_{jk},c_l)\in \mathbb A^d\times \mathbb A^{d(d+1)/2}$. By definition, it is clear that $(\Theta_P,\chi_P)=(\Theta'_1,\chi')$ and $P\in S_T$ by taking $\Psi_P=\Theta_2'$. 

Next, we define a map $r: S_T\to \mathcal H^2(T)$ by $r(P)=\left[(\Psi_P+\Theta_P,\chi_P)\right]$. One sees that $r$ is a well-defined surjective map by previous discussion. Hence, it remains to show that $r$ factors through the quotient map $\pi$ and the factorization is injective. Let $P,Q$ be two points in $S_T$. It suffices to prove $r(P)=r(Q)$ if and only if $\pi(P)=\pi(Q)$.
By Definition \ref{D:HG} (b), $r(P)=r(Q)$ if and only if there exists some $a\in A^+$ such that
\[\ZO_z(a)=(\Psi_P+\Theta_P)-(\Psi_Q+\Theta_Q),\ d^1(a)=\chi_P-\chi_Q.\]
Note that $d^1(a)=\chi_P-\chi_Q$ implies that $[\chi_P]=[\chi_Q]$ in $\HL^2(\Cobar A)$. Thus, $\chi_P=\chi_Q$ by \eqref{E:HG2}, and hence $\Psi_P=\Psi_Q$.
Therefore, we have $r(P)=r(Q)$ if and only if $\chi_P=\chi_Q$ and there exists some $a\in \mathfrak h$ ($d^1(a)=0$) such that $\ZO_z(a)=\Theta_P-\Theta_Q$. In other words, $\Theta_P$ and $\Theta_Q$ are in the same orbit of the $\mathfrak h$-action on $\mathbb A^d$.
\end{proof}

\begin{ques}
Is $\HG^2(T)$ always isomorphic to some open closed subset of $\mathbb A^n$ for some $n\ge 1$?
\end{ques}

Another way to understand $\mathcal H^2(T)$ is to consider the following commutative diagram
\[
\xymatrix{
 \mathcal H^2(T)\ar[d]^-{q}\ar@{^{(}->}[r]&(\mathbb A^{d}/\mathfrak h)\times \mathbb A^{d(d+1)/2}\ar[d]^-{p_2}\\
  \HL^2(\Cobar A)\ar@{=}[r]   &\mathbb A^{d(d+1)/2}.
}
\]
The above projection $q: [(\Theta,\chi)]\mapsto [\chi]$ sends every equivalence class to some nonzero $z$-characteristic element in $\HL^2(\Cobar A)$. 

\begin{definition}\label{D:Ad}
Let $\xi$ be $z$-characteristic in $\HL^2(\Cobar A)$, which is represented by some cocycle $\chi$. 
We say $\xi$ is \emph{admissible} if there exists some $a\in A^+$ such that $\ZO_z(\chi)=d^1(a)$ and $\rho_z(a)=0$. We denote by $\mathscr A^2(\Cobar A)$ all the admissible elements in $\HL^2(\Cobar A)$.
\end{definition}

Wee need to show that admissibility does not depend on the choice of the cocycle $\chi$. 
Let $\chi'$ be another cocycle representing $\xi$. Then there is some $X\in A^+$ such that $\chi'=\chi+d^1(X)$. Suppose there is some $a\in A^+$ satisfying $\ZO_z(\chi)=d^1(a)$ and $\rho_z(a)=0$. Let $b=a+\ZO_z(X)$. It is easy to see that $\ZO_z(\chi')=d^1(b)$ and $\rho_z(b)=\rho_z(a)+\rho_z\circ \ZO_z(X)=0$ by Proposition \ref{Maps} (c). 

\begin{remark}\label{R:Ad}
We make some observations concerning admissible cocycles.
\begin{itemize}
\item[(a)] There is a surjective factorization $q: \HG^2(T)\twoheadrightarrow \mathscr A^2(\Cobar A)\setminus \{0\}$.
\item[(b)] Let $\chi$ be a $z$-cocycle, and $a\in A_{\ge 2}$ the unique element determined by $\ZO_a(\chi)=d^1(a)$. Then $[\chi]$ is admissible if and only if $\rho_z(a)\in \Img\rho_z$.
\item[(c)] The admissibility is preserved by base field extension.
\end{itemize}
\end{remark}

We point out that not all $z$-characteristic elements are admissible. Suppose $\mathfrak g^p=0$, and $\mathfrak h$ is spanned by $\{x_1,x_2,x_3\}$ such that $x_1^p=x_3^p=0,x_2^p=x_3$. If $\rho_z(x_1)=x_2,\rho_z(x_2)=\rho_z(x_3)=0$, then $[\omega(x_1)]$ is $z$-characteristic but not admissible. But they coincide in some circumstances, which is our next result. Recall in \cite{Strade}, a finite-dimensional restricted Lie algebra $L$ is said to be a \emph{torus} if it is abelian and every element of $L$ is semisimple in $u(L)$, i.e., generates a semisimple subalgebra of $u(L)$.

\begin{prop}\label{P:TAd}
If either $\mathfrak h$ or $\mathfrak g$ is a torus, then every $z$-characteristic element is admissible. 
\end{prop}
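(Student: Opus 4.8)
The plan is to reduce the statement to a single membership question about $\rho_z$ and then dispatch the two torus hypotheses separately. First I would fix a $z$-characteristic class $\xi=[\chi]$, so that $\ZO_z(\chi)\in\cobo^2(\Cobar A)$, and write $\ZO_z(\chi)=d^1(a)$ for some $a\in A^+$. The decisive preliminary observation is that $\rho_z(a)$ is automatically primitive: since the $B$-action commutes with the differentials of $\Cobar A$ (Proposition \ref{CRHModule}) and $\rho_z\circ\ZO_z=0$ (Proposition \ref{Maps}(c)), one gets $d^1(\rho_z(a))=\rho_z(d^1(a))=\rho_z(\ZO_z(\chi))=0$, hence $\rho_z(a)\in\Ker(d^1|_{A^+})=\Prim(A)=\mathfrak h$. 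Any other preimage of $\ZO_z(\chi)$ under $d^1$ differs from $a$ by an element of $\Ker(d^1|_{A^+})=\mathfrak h$, so---exactly as in Remark \ref{R:Ad}(b)---admissibility of $\xi$ is equivalent to the membership $\rho_z(a)\in\Img(\rho_z|_{\mathfrak h})$: replacing $a$ by $a+h$ with $h\in\mathfrak h$, I need $\rho_z(h)=-\rho_z(a)$ to be solvable inside $\mathfrak h$. The whole proposition thus comes down to verifying $\rho_z(a)\in\Img(\rho_z|_{\mathfrak h})$ under each hypothesis.

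When $\mathfrak g$ is a torus: since $\dim\mathfrak g=1$ with $z^p+\lambda z=0$, the element $z$ is semisimple exactly when $t^p+\lambda t$ is separable, i.e.\ $\lambda\ne 0$. Then $\rho_z^{\,p}=\rho_{z^{[p]}}=\rho_{-\lambda z}=-\lambda\,\rho_z$ as operators on $A$. Writing $v:=\rho_z(a)\in\mathfrak h$, I would compute $\rho_z^{\,p-1}(v)=\rho_z^{\,p}(a)=-\lambda v$, so $h:=-\lambda^{-1}\rho_z^{\,p-2}(v)$ lies in $\mathfrak h$ and satisfies $\rho_z(h)=v=\rho_z(a)$; hence $\rho_z(a)\in\Img(\rho_z|_{\mathfrak h})$ and $a-h$ is an admissible representative. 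Structurally, $\lambda\ne 0$ forces $\rho_z|_{\mathfrak h}$ to be a semisimple operator, so $\mathfrak h=\Ker\rho_z\oplus\Img\rho_z$ and the primitive element $\rho_z(a)$ must sit in the image.

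When $\mathfrak h$ is a torus: here I expect the representation to collapse completely, $\rho_z=0$, after which admissibility is automatic because $\rho_z(a)=0\in\Img(\rho_z|_{\mathfrak h})$. Indeed Definition \ref{D:AR}(d), applied with $\mathfrak h$ abelian (so $\ad a=0$ and $(\ad a)^{p-1}=0$), gives $\rho_z(a^{[p]})=\rho_z(a)(\ad a)^{p-1}=0$ for every $a\in\mathfrak h$; that is, $\rho_z$ annihilates the image of the restricted $p$-power map. Over the perfect field $k$ this $p$-power map $\mathfrak h\to\mathfrak h$ is bijective for a torus---an element with $x^{[p]}=0$ would be a nonzero nilpotent in the semisimple algebra $u(\mathfrak h)$---hence surjective, which forces $\rho_z=0$ on $\mathfrak h$ and therefore on all of $A=u(\mathfrak h)$.

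The main obstacle is really the first step: recognizing that $\rho_z(a)$ is forced to be primitive and thereby repackaging admissibility as the clean membership $\rho_z(a)\in\Img(\rho_z|_{\mathfrak h})$. Once that reduction is in hand, each torus hypothesis supplies precisely the structural input required---semisimplicity of $\rho_z|_{\mathfrak h}$ when $\mathfrak g$ is a torus, vanishing of $\rho_z$ when $\mathfrak h$ is a torus---and the residual verifications are short linear-algebra computations.
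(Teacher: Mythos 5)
Your proof is correct, and for the case where $\mathfrak g$ is a torus it takes a genuinely different route from the paper. You first observe that any $d^1$-preimage $a$ of $\ZO_z(\chi)$ has $\rho_z(a)\in\Prim(A)=\mathfrak h$ (via Propositions \ref{CRHModule} and \ref{Maps}(c)) and that admissibility is exactly the membership $\rho_z(a)\in\Img(\rho_z|_{\mathfrak h})$; this is the content of Remark \ref{R:Ad}(b), which the paper records but does not actually deploy in its proof of Proposition \ref{P:TAd}. With that reduction in hand, your treatment of the $\mathfrak g$-torus case is coordinate-free: the operator identity $\rho_z^{\,p}=\rho_{z^{[p]}}=-\lambda\rho_z$ (valid on all of $A$ since $\mathfrak g\to\Der(u(\mathfrak h))$ is a restricted Lie map) immediately exhibits $\rho_z(a)=\rho_z\bigl(-\lambda^{-1}\rho_z^{\,p-2}(\rho_z(a))\bigr)$ with the argument lying in $\mathfrak h$. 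The paper instead passes to $\overline k$, diagonalizes $\rho_z$ modulo $\mathfrak h^p$, and runs an explicit coefficient computation of $\ZO_z(\chi)$ through the subalgebra $C=u(\mathfrak h^p)$; your version avoids both the base change and the choice of eigenbasis, and in fact works over an arbitrary field for that case. For the $\mathfrak h$-torus case the two arguments share the key point $\rho_z=0$, though you derive it from injectivity (hence, by perfectness and semilinearity, surjectivity) of the $p$-power map together with Definition \ref{D:AR}(d), rather than from Hochschild's toral basis; once $\rho_z=0$ you correctly note that any preimage $a$ witnesses admissibility, whereas the paper goes further and shows $\ZO_z(\chi)=0$ outright --- extra information, but not needed for the statement. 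Both proofs are sound; yours is shorter and more intrinsic, while the paper's explicit coordinate computations foreshadow the formulas of Proposition \ref{C7:zchar} and the semisimple classification in Section \ref{S:last}.
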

\begin{proof}
We can assume $k$ is algebraically closed. We only treat the case when characteristic $p=2$. The argument is similar for $p>2$. Let $\xi$ be a $z$-characteristic element in $\HL^2(\Cobar A)$. Since the admissibility does not depend on the choice of the representing cocycle for $\xi$, we can write the cocycle as
\begin{align*}
\chi=\sum_{1\le i\le j\le d} \mu_{ij}x_i\otimes x_j
\end{align*}
for some coefficients $\mu_{ij}\in k$ according to \eqref{E:HG2}.

(a) $\mathfrak h$ is a torus. By \cite{Hoch}, we can further assume that $x_i^p=x_i$ for $1\le i\le d$. Hence, by Definition \ref{AR} (d), we know $\rho_z=0$. Therefore,
\begin{align*}
\ZO_z(\chi)=\chi^p+\lambda\chi+\rho_z^{p-1}(\chi)=\sum_{1\le i\le j\le d} (\mu_{ij}^p+\lambda \mu_{ij})x_i\otimes x_j.
\end{align*}
Since $\{[x_i\otimes x_j]|1\le i\le j\le d\}$ is a basis for $\HL^2(\Cobar A)$, all the coefficients $\mu_{ij}^p+\lambda \mu_{ij}=0$. Therefore $\ZO_z(\chi)=0$, which implies that $\xi$ is admissible.

(b) $\mathfrak g$ is a torus. Then, in the relation $z^p+\lambda z=0$, we have $\lambda\neq 0$. By Definition \ref{AR} (d), we know $\rho_z(\mathfrak h^p)=0$. Since $k$ is algebraically closed, $\mathfrak h^p$ is a subspace of $\mathfrak h$, and $\rho_z$ is diagonalizable on $\mathfrak h/\mathfrak h^p$. Without loss of generality, we can assume in the basis $x_1,x_2,\cdots,x_d$ of $\mathfrak h$, the first $s$ elements form a basis for the subspace $\mathfrak h^p$, and the images of the remaining $d-s$ elements are eigenvectors in the quotient space $\mathfrak h/\mathfrak h^p$. In other words, we can write $\rho_z(x_i)=0$ for all $1\le i\le s$, and $\rho_z(x_j)=\sigma_jx_j+y_j$ for some $y_j\in \mathfrak h^p$ for $s+1\le j\le d$. It is easy to see that, if the eigenvalue $\sigma_j=0$, we have $y_j$=0. 
Therefore, by replacing $x_j$ by $x_j+y_j/\sigma_j$ when $\sigma_j\neq 0$, we have $\rho_z(x_i)=\sigma_ix_i$ for all $1 \le i\le d$.

Now, we consider the Hopf subalgebra of $A$, which is generated by the restricted Lie subalgebra $\mathfrak h^p$. We denote it by $C=u(\mathfrak h^p)$. Direct computation shows that
\begin{align}\label{P:EAd}
\ZO_z(\chi)=&\ \chi^p+\sum_{1\le i\le j\le d}\lambda\mu_{ij}x_i\otimes x_j+\sum_{\stackrel{k+l=p-1}{1\le i\le j\le d}}{p-1\choose k}\mu_{ij}\rho_z^k(x_i)\otimes  \rho_z^{l}(x_j)\\
=&\ \chi^p+\sum_{1\le i\le j\le d} \mu_{ij}\left(\lambda+(\sigma_i+\sigma_j)^{p-1}\right)x_i\otimes x_j.\notag
\end{align}
Since the $p$-th map in $\Cobar A$ commutes with the differentials in $\Cobar A$ by Proposition \ref{Maps} (a), we can view $\chi^p$ as a cocycle in the subcomplex $\Cobar C$. Therefore, there exists some $X\in C^+$ such that
\[
\chi^p=d^1(X)+\sum_{1\le i\le j\le s} \tau_{ij}x_i\otimes x_j.
\]
Combine it with above \eqref{P:EAd}, we get
\[
\ZO_z(\chi)=d^1(X)+\sum_{1\le i\le j\le d} \phi_{ij}x_i\otimes x_j,
\]
for some new coefficients $\phi_{ij}\in k$. Hence, $\phi_{ij}=0$ by the same reason as in (a). Then $\ZO_z(\chi)=d^1(X)$ and $\rho_z(X)=0$ for $X\in A^p$, which implies that $\xi$ is admissible.
\end{proof}

\begin{ques}
Is there a admissibility criterion for $z$-characteristic elements?
\end{ques}

Let $\xi$ be any nonzero admissible element in $\HL^2(\Cobar A)$. We write $\Ker \rho_z:=\Ker(\rho_z:\mathfrak h\to \mathfrak h)$. Because of Proposition \ref{Maps} (c), the $\mathfrak h$-action can be restricted to the subspace $\Ker\rho_z$. We want to study the fiber of the projection $q$. 
 
\begin{lem}\label{L:Fiber}
Points in the fiber $q^{-1}(\xi)$ are in $1$-$1$ correspondence with $\mathfrak h$-orbits in $\Ker \rho_z$. 
Moreover, the map $q$ is injective if and only if $\Ker\rho_z=\Img\ZO_z$.
\end{lem}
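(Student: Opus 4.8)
The plan is to reduce the fiber $q^{-1}(\xi)$ to a concrete quotient of $\Ker\rho_z$ by first fixing a single cocycle representative for $\xi$ and then classifying the allowed components $\Theta$ modulo the equivalence relation $\sim$ of Definition \ref{D:HG}.

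First I would fix a cocycle $\chi$ with $[\chi]=\xi$. Since $\xi$ is admissible (Definition \ref{D:Ad}), there is $a_0\in A^+$ with $\ZO_z(\chi)=d^1(a_0)$ and $\rho_z(a_0)=0$, so $(a_0,\chi)\in\EL^2(B,A)$; as $\xi\neq 0$ forces $\chi\notin\cobo^2(\Cobar A)$, this pair lies in $\EL^2(B,A)\setminus\LL^2(\mathfrak g,\mathfrak h)$ and maps to $\xi$ under $q$. Next I would show every class in $q^{-1}(\xi)$ has a representative with this same cocycle $\chi$: if $(\Theta,\chi')$ represents a fiber element then $\chi'=\chi+d^1(X)$ for some $X\in A^+$, and $(\Theta,\chi')\sim(\Theta-\ZO_z(X),\chi)$ by Definition \ref{D:HG}(b), the target again lying in $\EL^2(B,A)$ by $\sim$-invariance.

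The core computation is to identify the admissible $\Theta$'s for this fixed $\chi$ and the equivalences among them. The condition $\ZO_z(\chi)=d^1(\Theta)$ combined with $d^1(a_0)=\ZO_z(\chi)$ gives $d^1(\Theta-a_0)=0$, so $\Theta-a_0\in\Ker d^1=\Prim(A)=\mathfrak h$; writing $\Theta=a_0+h$, the remaining constraint $\rho_z(\Theta)=0$ becomes $\rho_z(h)=0$ since $\rho_z(a_0)=0$. Thus the valid components are exactly $a_0+\Ker\rho_z$. Two such, $(a_0+h_1,\chi)$ and $(a_0+h_2,\chi)$, are $\sim$-equivalent iff $d^1(a)=0$ and $h_1-h_2=\ZO_z(a)$ for some $a\in A^+$, i.e. (again using $\Ker d^1=\mathfrak h$) iff $h_1-h_2\in\ZO_z(\mathfrak h)=\Img\ZO_z$. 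Since $\Img\ZO_z\subseteq\Ker\rho_z$ by Proposition \ref{Maps}(c), this is precisely the condition that $h_1,h_2$ lie in the same $\mathfrak h$-orbit in $\Ker\rho_z$, so $h\mapsto[(a_0+h,\chi)]$ furnishes the claimed bijection between $\mathfrak h$-orbits in $\Ker\rho_z$ and $q^{-1}(\xi)$.

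For the injectivity statement, the key observation is that the set of $\mathfrak h$-orbits in $\Ker\rho_z$, namely $\Ker\rho_z/\Img\ZO_z$, does not depend on $\xi$, so by the first part every nonempty fiber of $q$ has the same cardinality, equal to the number of these orbits; since the nonzero admissible elements are exactly the image of $q$ (Remark \ref{R:Ad}(a)), $q$ is injective iff that number is $1$, i.e. iff $\Ker\rho_z=\Img\ZO_z$. I expect the main obstacle to be the bookkeeping in the normalization step — verifying that shifting the cocycle by a coboundary and correspondingly adjusting $\Theta$ by $\ZO_z(X)$ stays within $\EL^2(B,A)$ and respects $\sim$ — together with the careful use of $\Ker d^1=\mathfrak h$ and $\rho_z\circ\ZO_z=0$ to pin down the affine space of admissible $\Theta$; the remainder is a direct translation into orbit language.
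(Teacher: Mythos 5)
Your proof is correct and follows essentially the same route as the paper's: fix (a normal form of) the cocycle, observe that the residual freedom in $\Theta$ is exactly a coset of $\Ker\rho_z$ inside $\mathfrak h$, and quotient by the $\mathfrak h$-action $h\mapsto h+\ZO_z(a)$ to get $\Ker\rho_z/\Img\ZO_z$. The only difference is presentational: the paper runs the argument through the coordinates and the map $\pi$ of Proposition \ref{P:Embed} (using the decomposition $A^+=A_{\ge 2}\oplus\mathfrak h$), whereas you work directly with $\sim$-classes in $\EL^2(B,A)$, using $\Ker \diff^1=\Prim(A)=\mathfrak h$ and $\rho_z\circ\ZO_z=0$ to the same effect.
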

\begin{proof}
We use the notations in Proposition \ref{P:Embed}. Consider the following commutative diagram:
\[
\xymatrix{
S_T\ar[rr]^-{\pi}\ar[dr]_-{r}  &&   (\mathbb A^d/\mathfrak h)\times \mathbb A^{d(d+1)/2}\\
&\mathcal H^2(T)\ar@{^{(}->}[ur]\ar[d]_-{q}&\\
&\HL^2(\Cobar A).&
}
\]
By Proposition \ref{P:Embed},  points in $q^{-1}(\xi)$ are in $1$-$1$ correspondence with points in $\pi(rq)^{-1}(\xi)$. By Remark \ref{R:Ad} (a), there exists some point $P_0$ in the fiber $(rq)^{-1}(\xi)$. 
One sees that any point $P\in (rq)^{-1}(\xi)$ if and only if 
\[
\chi_P=\chi_{P_0},\ \rho_z(\Psi_P+\Theta_P)=0,\ \text{for some}\ \Psi_P\in A_{\ge 2}.
\]
It follows that $\Psi_P=\Psi_{P_0}$, and  $\rho_z(\Theta_P-\Theta_{P_0})=0$. Hence
\[
(rq)^{-1}(\xi)=\{P\in S_T|(\Theta_P,\chi_P)\in (\Theta_{P_0}+\Ker \rho_z,\chi_{P_0})\}.
\]
Therefore, $\pi(rq)^{-1}(\xi)=(\Theta_{P_0}+\Ker \rho_z/\mathfrak h) \times \chi_{P_0}\cong (\Ker \rho_z/\mathfrak h)\times \chi_{P_0}$. Moreover, $q$ is injective if and only if $\mathfrak h$-orbit in $\Ker\rho_z$ is single if and only if $\Ker\rho_z=\Img\ZO_z$.
\end{proof}

\begin{ques}
Can we classify all the types in $\CR$ such that the corresponding projection $q$ is injective?
\end{ques}

Note that there is an induced $\Aut(T)$-action on $\HL^2(\Cobar A)$ by \eqref{G-action}. Moreover, $\mathscr A^2(\Cobar A)$ is invariant under this induced action. 

\begin{prop}\label{P:HT=Co}
Suppose $\Ker\rho_z=\Img\ZO_z$. Then $q$ induces a bijection between $\Aut(T)$-orbits in $\HG^2(T)$ and $\Aut(T)$-orbits in $\mathscr A^2(\Cobar A)\setminus \{0\}$. Moreover, we can replace $\mathscr A^2(\Cobar A)$ by all $z$-characteristic elements if either $\mathfrak h$ or $\mathfrak g$ is a torus.
\end{prop}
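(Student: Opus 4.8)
The plan is to first show that, under the hypothesis $\Ker\rho_z=\Img\ZO_z$, the map $q$ is already a bijection $\HG^2(T)\to \mathscr A^2(\Cobar A)\setminus\{0\}$ at the level of sets, and then to upgrade this to orbit spaces by checking that $q$ is $\Aut(T)$-equivariant. Almost everything needed is already in place, so the proof is essentially a formal assembly of Remark \ref{R:Ad}(a), Lemma \ref{L:Fiber}, and Proposition \ref{P:TAd}.

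For the bijection, I would combine two facts. Remark \ref{R:Ad}(a) gives that $q\colon \HG^2(T)\twoheadrightarrow \mathscr A^2(\Cobar A)\setminus\{0\}$ is surjective, and Lemma \ref{L:Fiber} identifies each fiber $q^{-1}(\xi)$ with the set of $\mathfrak h$-orbits in $\Ker\rho_z$, with $q$ injective precisely when $\Ker\rho_z=\Img\ZO_z$. Hence the standing hypothesis forces every nonempty fiber to be a single point, so that combined with surjectivity, $q$ is a bijection onto $\mathscr A^2(\Cobar A)\setminus\{0\}$.

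Next I would verify equivariance. Recall $q([(\Theta,\chi)])=[\chi]$, while the $\Aut(T)$-action on $\HG^2(T)$ is induced from \eqref{G-action}, namely $g.(\Theta,\chi)=(\gamma_g^{p}g(\Theta),\gamma_g(g\otimes g)(\chi))$, and the induced action on $\HL^2(\Cobar A)$ is $g.[\chi]=[\gamma_g(g\otimes g)(\chi)]$. Comparing second components yields $q(g.[(\Theta,\chi)])=[\gamma_g(g\otimes g)(\chi)]=g.q([(\Theta,\chi)])$, so $q$ intertwines the two actions. Since an equivariant bijection carries orbits to orbits bijectively, and since $\mathscr A^2(\Cobar A)\setminus\{0\}$ is $\Aut(T)$-invariant (as noted just before the statement, using that $0$ is fixed), this gives the first claim. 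For the moreover part, when $\mathfrak h$ or $\mathfrak g$ is a torus, Proposition \ref{P:TAd} shows every $z$-characteristic element is admissible, so $\mathscr A^2(\Cobar A)$ coincides with the set of all $z$-characteristic elements, and the replacement is immediate.

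The bulk of the work is therefore bookkeeping, and I expect the only point requiring genuine care to be the equivariance check: one must confirm that the action used on $\HL^2(\Cobar A)$ is \emph{exactly} the one induced through $q$ from \eqref{G-action} on representatives, so that the equivariant set-bijection legitimately descends to a bijection of quotients rather than merely to a well-defined surjection between orbit spaces.
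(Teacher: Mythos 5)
Your proposal is correct and follows essentially the same route as the paper: the paper likewise combines Remark \ref{R:Ad}(a) for surjectivity, Lemma \ref{L:Fiber} for injectivity under $\Ker\rho_z=\Img\ZO_z$, compatibility of $q$ with the $\Aut(T)$-action, and Proposition \ref{P:TAd} for the torus case. Your explicit equivariance check via \eqref{G-action} simply spells out what the paper asserts in one line.
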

\begin{proof}
By Remark \ref{R:Ad} (a), $q$ maps unto $\mathscr A^2(\Cobar A)\setminus \{0\}$. If $\Ker\rho_z=\Img\ZO_z$, then $q$ induces a bijection between $\HG^2(T)$ and  $\mathscr A^2(\Cobar A)\setminus \{0\}$ by Lemma \ref{L:Fiber}. Since $q$ is compatible with the $\Aut(T)$-action, the result follows from Proposition \ref{P:TAd}.
\end{proof}

\begin{ques}
Can we classify all the types in $\CR$ such that the set $\HG^2(T)$ is empty?
\end{ques}

Therefore, it is important to establish formulas to compute $z$-characteristic elements in $\HL^2(\Cobar A)$. Recall that any $\xi\in \HL^2(\Cobar A)$ is $z$-characteristic if $\ZO_z(\xi)=\xi^p+\lambda\xi+\rho_z^{p-1}(\xi)=0$. We work over a perfect field $k$ of characteristic $p>2$. By \eqref{E:HG2}, we can write every $\xi\in\HL^2(\Cobar A)$ as
\[
\xi=\Lambda+\Bock(x),
\]
where $\Lambda=\sum_{1\le i<j\le d} \mu_{ij}x_i\wedge x_j$ and $x=\sum_{1\le i\le d} \mu_ix_i$ for some coefficients $\mu_{ij},\mu_i$ in $k$. 
Note that the two subspaces $\Lambda^2(\mathfrak h)$ and 
$\Bock(\mathfrak h)$ are all $\ZO_z$-invariant. Hence, $\xi$ is $z$-characteristic if and only if
\[
\ZO_z(\Lambda)=0,\ \ZO_z\left(\Bock(x)\right)=0.
\]
\begin{lem}\label{L:Trivial}
For any $x\in \mathfrak h$, we have $\rho_z\left(\Bock(x)\right)=\diff^1\left(-x^{p-1}\rho_z(x)\right)$. As a consequence, the $\rho$-action is trivial on $\Bock(\mathfrak h)$.
\end{lem}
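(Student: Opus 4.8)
The plan is to verify the identity by computing both sides directly and matching them term by term, then deduce the triviality statement on cohomology. First I would record two facts about $\rho_z$. Since $z$ is primitive in $B$, the formula for the $B$-action on $\Cobar A$ preceding Proposition \ref{CRHModule} shows that $\rho_z$ acts on the degree-two piece $A^+\otimes A^+$ as the derivation $\rho_z\otimes 1+1\otimes\rho_z$. Moreover, by the discussion following Definition \ref{AR}, $\rho_z$ is a genuine derivation of the commutative algebra $A=u(\mathfrak h)$, so $\rho_z(x^i)=ix^{i-1}\rho_z(x)$ for all $i\ge 1$. Throughout I write $y=\rho_z(x)\in\mathfrak h$, noting that $x$ and $y$ commute in $A$ because $\mathfrak h$ is abelian.

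For the left-hand side I would apply $\rho_z\otimes 1+1\otimes\rho_z$ to $\Bock(x)=\sum_{i=1}^{p-1}\tfrac1p\binom{p}{i}\,x^i\otimes x^{p-i}$. The coefficients simplify via the integer identities $\tfrac{i}{p}\binom{p}{i}=\binom{p-1}{i-1}$ and $\tfrac{p-i}{p}\binom{p}{i}=\binom{p-1}{i}$, and after reindexing the two resulting sums (by $k=i-1$ in the first and $k=i$ in the second) one obtains
\[
\rho_z\!\left(\Bock(x)\right)=\sum_{k=0}^{p-2}\binom{p-1}{k}x^ky\otimes x^{p-1-k}+\sum_{k=1}^{p-1}\binom{p-1}{k}x^k\otimes x^{p-1-k}y.
\]
For the right-hand side I would use $\diff^1(-w)=\overline{\Delta}(w)$ with $w=x^{p-1}y$. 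Since $x$ and $y$ are primitive and $A\otimes A$ is commutative, the binomial theorem gives $\Delta(x)^{p-1}=\sum_{k=0}^{p-1}\binom{p-1}{k}x^k\otimes x^{p-1-k}$, so that $\Delta(w)=\Delta(x)^{p-1}\Delta(y)$ expands into a double sum; subtracting the outer terms $w\otimes 1$ (the $k=p-1$ term) and $1\otimes w$ (the $k=0$ term) yields exactly the same two sums displayed above. Hence $\rho_z(\Bock(x))=\overline{\Delta}(x^{p-1}\rho_z(x))=\diff^1(-x^{p-1}\rho_z(x))$, which is the first assertion.

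For the consequence, the identity just proved exhibits $\rho_z(\Bock(x))$ as a coboundary, so it represents $0$ in $\HL^2(\Cobar A)$. Therefore the induced action of $\rho_z$ on cohomology (Proposition \ref{CRHModule} and Corollary \ref{S:ARCR}) annihilates every class $\Bock(x)$, i.e. $\rho_z$ acts as zero on the subspace $\Bock(\mathfrak h)\subseteq\HL^2(\Cobar A)$. Since $\mathfrak g$ is one-dimensional and spanned by $z$, the entire $\rho$-action is trivial on $\Bock(\mathfrak h)$. I expect the only delicate point to be the combinatorial bookkeeping: one must read $\tfrac1p\binom{p}{i}$ as the integer it represents before reducing modulo $p$, apply the two binomial identities correctly, and reindex so the two sums align with the expansion of $\overline{\Delta}(x^{p-1}\rho_z(x))$. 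Beyond this the verification is entirely routine.
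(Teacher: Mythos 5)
Your proof is correct and follows essentially the same route as the paper: apply $\rho_z\otimes 1+1\otimes\rho_z$ to $\Bock(x)$, use the derivation property and the integer identities $\tfrac{i}{p}\binom{p}{i}=\binom{p-1}{i-1}$, $\tfrac{p-i}{p}\binom{p}{i}=\binom{p-1}{i}$, and recognize the resulting sums as $\overline{\Delta}\left(x^{p-1}\rho_z(x)\right)=\diff^1\left(-x^{p-1}\rho_z(x)\right)$. The paper assembles the final expression as $(x\otimes 1+1\otimes x)^{p-1}\left(\rho_z(x)\otimes 1+1\otimes\rho_z(x)\right)$ minus the outer terms rather than expanding $\Delta(x^{p-1}\rho_z(x))$ from the other side, but this is the same computation read in the opposite direction.
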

\begin{proof}
Since $\mathfrak h$ is abelian, we have $\rho_z(x^i)=ix^{i-1}\rho_z(x)$ for any $x\in \mathfrak h$. Thus,
\begin{align*}
\rho_z\left(\Bock(x)\right)=&\ \sum_{i=1}^{p-1}\frac{(p-1)!}{i!(p-i)!}\ \rho_z(x^i)\otimes x^{p-i}+\sum_{i=1}^{p-1}\frac{(p-1)!}{i!(p-i)!}\ x^i\otimes \rho_z(x^{p-i})\\
=&\ \sum_{i=1}^{p-1}\frac{(p-1)!}{(i-1)!(p-i)!}\ x^{i-1}\rho_z(x)\otimes x^{p-i}+\sum_{i=1}^{p-1}\frac{(p-1)!}{i!(p-1-i)!}\ x^i\otimes x^{p-1-i}\rho_z(x)\\
=&\ \sum_{i=0}^{p-2}{p-1\choose i}\ x^{i}\rho_z(x)\otimes x^{p-1-i}+\sum_{i=1}^{p-1}{p-1\choose i}\ x^i\otimes x^{p-1-i}\rho_z(x)\\
=&\ (x\otimes 1+1\otimes x)^{p-1}\left(\rho_z(x)\otimes 1+1\otimes \rho_z(x)\right)-\left(x^{p-1}\rho_z(x)\right)\otimes 1-1\otimes\left(x^{p-1}\rho_z(x)\right)\\
=&\ \diff^1\left(-x^{p-1}\rho_z(x)\right).
\end{align*}
\end{proof}

\begin{prop}\label{C7:zchar}
Using above notations, $\xi$ is $z$-characteristic if and only if $x^p+\lambda^{1/p}x=0$, and the following equality holds in $\Lambda^2(\mathfrak h)$:
\[
\sum_{1\le i<j\le d}\mu_{ij}^px_i^p\wedge x_j^p+\sum_{1\le i<j\le d}\lambda\mu_{ij} x_i\wedge x_j+\sum_{\stackrel{k+l=p-1}{1\le i<j\le d}}{p-1\choose k}\mu_{ij}\rho_z^k(x_i)\wedge \rho_z^{l}(x_j)=0.
\]
\end{prop}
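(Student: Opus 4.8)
The starting point is the reduction already recorded just before the statement: writing $\xi=\Lambda+\Bock(x)$ and using the $\ZO_z$-invariance of the two summands $\Lambda^2(\mathfrak h)$ and $\Bock(\mathfrak h)$ of $\HL^2(\Cobar A)$ in \eqref{E:HG2}, it suffices to solve $\ZO_z(\Lambda)=0$ and $\ZO_z(\Bock(x))=0$ separately. Since here $n=1$ and $f(z)=z^p+\lambda z$, Definition \ref{C2:Def1} together with Proposition \ref{Maps} (b) make the operator explicit: $\ZO_z=\PM+\rho_z^{p-1}+\lambda\,\Id$ on $\HL^2(\Cobar A)$. The plan is to evaluate each of the three summands $\PM$, $\rho_z^{p-1}$ and $\lambda\,\Id$ on the two pieces and collect terms.

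For the Bockstein piece I would first dispose of the middle summand: by Lemma \ref{L:Trivial} the element $\rho_z(\Bock(x))$ is a coboundary, so $\rho_z$ annihilates the class $[\Bock(x)]$ and hence $\rho_z^{p-1}([\Bock(x)])=0$ (as $p-1\ge 1$). For the Frobenius summand, $\PM$ is the componentwise $p$-power on $A^+\otimes A^+$; since the coefficients $\binom{p}{i}/p$ lie in $\pfield$ and $\mathfrak h$ is abelian, a direct Frobenius computation gives $\PM(\Bock(x))=\Bock(x^p)$, where $x^p$ denotes the restricted $p$-power in $\mathfrak h$. Finally, using that $k$ is perfect and that $\Bock$ is Frobenius-semilinear, I rewrite $\lambda\Bock(x)=\Bock(\lambda^{1/p}x)$. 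Additivity of $\Bock$ then yields $\ZO_z(\Bock(x))=\Bock(x^p+\lambda^{1/p}x)$, and because $\Bock$ is injective on $\mathfrak h$ (its image is the $d$-dimensional summand in \eqref{E:HG2}) this vanishes if and only if $x^p+\lambda^{1/p}x=0$, which is the first asserted condition.

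For the exterior piece I represent $\Lambda$ by the cocycle $\sum_{i<j}\mu_{ij}\,x_i\otimes x_j$ and compute the three summands at the cochain level. The Frobenius term gives $\PM(\Lambda)=\sum_{i<j}\mu_{ij}^p\,x_i^p\otimes x_j^p$. For the middle term, the $B$-action on degree two is the derivation $\rho_z\otimes 1+1\otimes\rho_z$ (because $z$ is primitive), so the binomial theorem yields $\rho_z^{p-1}(\Lambda)=\sum_{i<j}\mu_{ij}\sum_{k+l=p-1}\binom{p-1}{k}\rho_z^k(x_i)\otimes\rho_z^l(x_j)$, while the last term is simply $\lambda\Lambda$. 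Every summand already has primitive entries, hence lies in the $\Lambda^2(\mathfrak h)$-part; passing to cohomology and identifying $[a\otimes b]$ with $a\wedge b$ turns $\ZO_z(\Lambda)=0$ into exactly the stated equality in $\Lambda^2(\mathfrak h)$.

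I expect the genuine work to be concentrated in the Bockstein piece—specifically the identity $\PM(\Bock(x))=\Bock(x^p)$ and the rewriting $\lambda\Bock(x)=\Bock(\lambda^{1/p}x)$, which together rely on the semilinearity of $\Bock$, the perfectness of $k$, and the injectivity of $\Bock\colon\mathfrak h\to\HL^2(\Cobar A)$; the disappearance of the $\rho_z^{p-1}$-contribution there is precisely what Lemma \ref{L:Trivial} provides. The exterior piece, by contrast, is a direct (if bookkeeping-heavy) cochain computation once the derivation form of the $B$-action is recorded, so I do not anticipate any essential obstacle in that part.
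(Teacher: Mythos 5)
Your proposal is correct and follows essentially the same route as the paper: the same decomposition $\xi=\Lambda+\Bock(x)$ into the two $\ZO_z$-invariant summands of \eqref{E:HG2}, the same use of Lemma \ref{L:Trivial} to kill the $\rho_z^{p-1}$-contribution on the Bockstein part, and the same Frobenius-semilinearity argument giving $\ZO_z(\Bock(x))=\Bock(x^p+\lambda^{1/p}x)$. The only difference is one of explicitness: you spell out the binomial expansion of $\rho_z^{p-1}$ on $\Lambda$ and the injectivity of $\Bock$, which the paper leaves as ``a direct computation.''
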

\begin{proof}
We know $\Bock$ is semi-linear with respect to the Frobenius map of $k$. Thus, by previous lemma,
\begin{align*}
\ZO_z\left(\Lambda+\Bock(x)\right)=&\ZO_z(\Lambda)+\ZO_z[\Bock(x)]\\
=&\ZO_z(\Lambda)+\Bock(x)^p+\lambda \Bock(x)+\rho_z^{p-1}[\Bock (x)]\\
=&\ZO_z(\Lambda)+\Bock\left (x^p+\lambda^{1/p}x\right).
\end{align*}
Then the result follows from a direct computation for $\ZO_z(\Lambda)$.
\end{proof}

Finally, we consider the $\Aut(T)$-action on $\HL^2(\Cobar A)$ induced by \eqref{G-action} concerning Proposition \ref{P:HT=Co}. Let $g\in \Aut(T)$. Recall that the group character $\gamma: \Aut(T)\to k^\times$ is defined by $g(z)=\gamma_g z$. According to \eqref{E:HG2}, we can write any cohomology class $\xi\in \HL^2(\Cobar A)$ as
\[
\xi=\sum_{1\le i<j\le d}\mu_{ij}x_i\wedge x_j+\Bock\left(\sum_{1\le i\le d}\mu_ix_i\right).
\]
By \eqref{G-action}, the action of $g$ on $\HL^2(\Cobar A)$ is given by
\begin{align}\label{C8:formula}
g(\xi)=\sum_{1\le i<j\le d}\gamma_g\mu_{ij}g(x_i)\wedge g(x_j)+\Bock\left(\sum_{1\le i\le d} \sqrt[p]{\gamma_g}\mu_ig(x_i)\right).
\end{align}

Furthermore, if $k=\pfield$, we can identify $\HL^2(\Cobar A)=\Lambda^1(\mathfrak h)\oplus \Lambda^2(\mathfrak h)$. Hence we can rewrite
\[
\xi=\sum_{1\le i\le d}\mu_ix_i+\sum_{1\le i<j\le d}\mu_{ij}x_i\wedge x_j.
\]
Moreover, there is a group embedding from $\Aut(T)$ to the affine automorphism group of $\Lambda(\mathfrak h)$, which changes \eqref{C8:formula} into
\begin{align}\label{C8:format}
g(\xi)=\gamma_g\left(\sum_{1\le i\le d} \mu_ig(x_i)+\sum_{1\le i<j\le d} \mu_{ij}g(x_i)\wedge g(x_j)\right).
\end{align}
The discussion for $p=2$ is similar, so we omit it here.

%
\section{Proof of Theorem \ref{C0:thmD}}\label{S:last}
We classify all semisimple Hopf algebras in $\CH$ under the assumption that $k$ is algebraically closed of characteristic $p>0$. Let $\mathbb K$ denote the finite field of $p$ elements.
Recall the semisimplicity criteria for connected Hopf algebras.
\begin{thm*}\cite{sscHa,Wan3}\label{C9:SCH}
Let $H$ be a finite-dimensional connected Hopf algebra. The following are equivalent:
\begin{itemize}
\item[(a)] $H$ is semisimple.
\item[(b)] $H$ is commutative and semisimple.
\item[(c)] $\Prim(H)$ is a torus.
\item[(d)] $H^*\simeq k[G]$, for some $p$-group $G$.
\end{itemize} 
\end{thm*}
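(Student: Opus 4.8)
The plan is to establish the cycle (b)$\Rightarrow$(a)$\Rightarrow$(c)$\Rightarrow$(b) together with the duality (b)$\Leftrightarrow$(d), regarding the genuinely Hopf-theoretic content as the equivalence (a)$\Leftrightarrow$(c). The implication (b)$\Rightarrow$(a) is immediate. For (b)$\Leftrightarrow$(d) I would argue by finite-dimensional duality: if $H$ is commutative and semisimple over the algebraically closed field $k$, then $H\cong k^N$ as an algebra, hence $H\cong k^X$ for a finite set $X$, and the Hopf structure dualizes to a pointed cocommutative Hopf structure on $H^*$ whose grouplikes form a group $G$ with $H^*\cong k[G]$ and $H\cong k^G$. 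Connectedness of $H$ is equivalent to $H^*$ being local, which for a group algebra in characteristic $p$ happens exactly when $G$ is a $p$-group; the converse reads off $H=k^G$ as commutative and semisimple. The same computation $\Prim(k^G)=\Hom(G,k^+)=\Hom(G/\Phi(G),k^+)$ is what will later pin down the Frattini condition in part (c) of the target application, Theorem~\ref{C0:thmD}.

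For \textbf{(a)$\Rightarrow$(c)} I would use that the restricted enveloping algebra of the primitives embeds as a Hopf subalgebra $u(\Prim H)\hookrightarrow H$, since a coalgebra map that is injective on primitives is injective on a connected coalgebra. By the Nichols--Zoeller freeness theorem $H$ is free over this subalgebra, and because a Hopf subalgebra of a semisimple Hopf algebra is again semisimple, $u(\Prim H)$ is semisimple. Hochschild's criterion \cite{Hoch}, that $u(L)$ is semisimple precisely when $L$ is a torus, then yields that $\Prim(H)$ is a torus; in particular it is abelian, which is what lets the construction of Section~\ref{S:AGC} apply.

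The crux is \textbf{(c)$\Rightarrow$(b)}, and the key observation I would isolate first is that a torus $\mathfrak h=\Prim(H)$ forces the governing representation to vanish. Over $k$ algebraically closed we may choose a basis with $x_i^{[p]}=x_i$; using $x_i^{p}=x_i^{[p]}$ in $u(\mathfrak h)$, abelianness, and Definition~\ref{D:AR}(d),
$\rho_z(x_i)=\rho_z(x_i^{[p]})=\rho_z(x_i^{p})=\rho_z(x_i)(\ad x_i)^{p-1}=0$, so $\rho=0$. Consequently, realizing $H$ by data $\mathscr D$ via Corollary~\ref{C4:ObjH} in an extension $u(\mathfrak h)\to H\to \bar H$, the action of $\bar H$ on $u(\mathfrak h)$ is trivial, so $u(\mathfrak h)$ is central and $H$ is commutative. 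For semisimplicity I would then exploit the torus hypothesis through Proposition~\ref{P:TAd}: every $z$-characteristic class is admissible, and the explicit description in Proposition~\ref{C7:zchar} forces the defining power relation $f(x)=-\Theta$ to be separable over the already-semisimple algebra $u(\mathfrak h)$, so $H$ is reduced and hence semisimple; an induction on the coradical length reduces the general case to the one-step case $\dim\bar H=p$.

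I expect the main obstacle to be precisely this last propagation step: showing that the torus condition persists through the non-primitive layers and rules out a nilpotent generator, i.e.\ a relation of the form $x^p=0$, which would contribute a non-semisimple element to $\Prim(H)$ and thereby contradict (c). This is exactly the exceptional behavior flagged after Remark~\ref{R:Ad} (a $z$-characteristic class that fails to be admissible), so the argument must lean on the vanishing of $\rho$ together with the admissibility analysis of Proposition~\ref{P:TAd} to exclude it at every stage of the filtration.
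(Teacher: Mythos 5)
First, a point of comparison: the paper does not prove this theorem at all --- it is quoted from Masuoka \cite{sscHa} and Wang \cite{Wan3} --- so there is no internal proof to measure your argument against, and it must stand on its own. The parts (b)$\Rightarrow$(a), (b)$\Leftrightarrow$(d) and (a)$\Rightarrow$(c) do stand: the duality argument identifying connectedness of $H$ with locality of $H^*$ and hence with $G$ being a $p$-group is correct, and (a)$\Rightarrow$(c) via Nichols--Zoeller freeness (so that the trivial module remains projective upon restriction to the Hopf subalgebra $u(\Prim(H))$) together with Hochschild's criterion is a clean and valid route.

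The genuine gap is in (c)$\Rightarrow$(b). Every tool you invoke there --- Corollary \ref{C4:ObjH}, Propositions \ref{P:TAd} and \ref{C7:zchar} --- lives inside the class $\CH$, i.e.\ it presupposes $\dim H=p\cdot\dim u(\Prim(H))$, whereas the theorem concerns an arbitrary finite-dimensional connected $H$. Your proposed induction on coradical length does not repair this: after one step, an intermediate Hopf subalgebra $K$ with $u(\Prim(H))\subseteq K\subseteq H$ and $[H:K]=p$ is (by the inductive hypothesis) commutative, semisimple and connected, but in general \emph{not} primitively generated --- for instance $(k[C_{p^2}])^*$ has one-dimensional primitive space and dimension $p^2$ --- so $H$ is then an extension over a base that is not of the form $u(\mathfrak h)$, and none of the machinery of Sections \ref{S:AGC}--\ref{S:Realization} applies. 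Even in the honest one-step case your separability step is only gestured at, and Proposition \ref{P:TAd} (admissibility) is not the relevant lever. What actually excludes a nilpotent generator is this: with $\mathfrak h$ a torus and $\rho=0$, the operator $\ZO_z$ acts on $\HL^2(\Cobar A)$ coefficientwise by $\mu\mapsto\mu^p+\lambda\mu$ (using $x_i^p=x_i$), so if $\lambda=0$ it is injective on cohomology; the $z$-cocycle condition $\ZO_z([\chi])=0$ then forces $[\chi]=0$, whence by Theorem \ref{C0:thmA}(c) the extension is primitively generated and $\Prim(H)\supsetneq\mathfrak h$, contradicting (c). Hence $\lambda\neq 0$, the relation $x^p+\lambda x+\Theta=0$ is separable over the reduced ring $A$, and $H$ is reduced, hence semisimple. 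You should either carry out that computation explicitly and find a genuinely different induction scheme for the general case, or do what the paper does and defer the whole equivalence to \cite{sscHa} or \cite{Wan3}.
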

Let $H$ be any semisimple Hopf algebra in $\CH$. By Corollary \ref{C4:ObjH}, $H$ is a $B$-extension over $A$ such that $\Prim(H)=\mathfrak h$ is a torus. Since quotient of any semisimple Hopf algebra is semisimple, $\mathfrak g$ is a one-dimensional torus. Because $H$ is commutative, $H$ has type $T=(\mathfrak g,\mathfrak h,0)$ where $\mathfrak h,\mathfrak h$ are tori. We call type $T=(\mathfrak g,\mathfrak h,0)$ a \emph{semisimple type} if $\mathfrak g,\mathfrak h$ are tori. By Theorem \ref{C0:thmC}, isomorphism classes of semisimple Hopf algebras in $\CH$ are in $1$-$1$ correspondence with points in
\[\coprod_T\mathcal H^2(T)/\Aut(T),\] 
where $T$ runs through all non-isomorphic semisimple types in $\CR$.

First of all, we classify all semisimple types in $\CR$. Since $k$ is algebraically closed, it is clear that they are classified by $\dim \mathfrak h$. For any $d\ge 1$, we choose a representative $T_d=(\mathfrak g,\mathfrak h,0)$, where we fix a basis $z$ for $\mathfrak g$ and $x_1,x_2,\cdots,x_d$ for $\mathfrak h$ satisfying $z^p=z$ and $x_i^p=x_i$ for all $1\le i\le d$. Regarding the automorphism group, since $\rho=0$, $\Aut(T_d)=\Aut(\mathfrak g)\times \Aut(\mathfrak h)$. Direct computation shows that $\Aut(T_d)= \pfield^\times \times \GL(d,\pfield)$.

Secondly, we compute $\mathcal H^2(T_d)$ for each $T_d$. For any $x=\sum_{1\le i\le d}\mu_ix_i$ in $\mathfrak h$, it is direct to check that
\[
\ZO_z(x)=x^p-x=\sum_{1\le i\le d}(\mu_i^p-\mu_i)x_i.
\]
Hence, $\Img\ZO_z=\mathfrak h=\Ker \rho_z$ since $k=\overline{k}$. By Proposition \ref{P:HT=Co}, elements in $\mathcal H^2(T_d)$ are in $1$-$1$ correspondence with nonzero $z$-characteristic elements in $\HL^2(\Cobar A)$. Then, we follow Proposition \ref{C7:zchar} to find all the $z$-characteristic elements in $\HL^2(\Cobar A)$. When $p=2$, we can write any cohomology class by
\[
\xi=\sum_{1\le i\le j\le d}\mu_{ij}x_ix_j.
\]
Direct computation shows that
\begin{align*}
\ZO_z(\xi)=&\ \xi^p-\xi\\
=&\ \sum_{1\le i\le j\le d} \mu_{ij}^px_i^px_j^p-\sum_{1\le i\le j\le d}\mu_{ij}x_ix_j\\
=&\ \sum_{1\le i\le j\le d}(\mu_{ij}^p-\mu_{ij})x_ix_j.
\end{align*}
So $\xi$ is $z$-characteristic if and only if all $\mu_{ij}\in \pfield$. When $p>2$, we can write any cohomology class by
\[
\xi=\sum_{1\le i< j\le d}\mu_{ij}x_ix_j+\Bock\left(\sum_{1\le i\le d}\mu_ix_i\right).
\]
Similar computation yields the same result. Hence, elements in $\mathcal H^2(T_d)$ are in $1$-$1$ correspondence with points in $\mathbb A_\pfield^{d(d+1)/2}\setminus\{0\}$.

In a conclusion, to find $\HG^2(T_d)/\Aut(T_d)$, it is equivalent to consider the group action of $\pfield^\times\times\GL(d,\pfield)$ on $\mathbb A_\pfield^{d(d+1)/2}\setminus\{0\}$.
By \eqref{C8:format}, the subgroup $\pfield^\times$ acts on $\mathbb A_\pfield^{d(d+1)/2}\setminus\{0\}$ by multiplication, whose quotient space is the projective space $\mathbb P_\pfield^{d(d-1)/2}$. 
Hence in general, $\mathcal H^2(T)/\Aut(T)$ are in $1$-$1$ correspondence with
\[
\mathbb P_\pfield^{d(d-1)/2}\big/\GL(d,\pfield).
\]
\newline

\noindent
\textbf{(a) $\Longleftrightarrow$ (b) of Theorem \ref{C0:thmD}.} In the following, we work over $\pfield$. We prove the case for $p>2$ and $p=2$ is similar. We denote by $W=\wedge^1(\mathfrak h)\oplus \wedge^2(\mathfrak h)$ the degree one and two parts of the exterior algebra $\wedge(\mathfrak h)$. Points of $\mathbb P_\pfield^{d(d-1)/2}$ are in $1$-$1$ correspondence with one-dimensional subspaces of $W$ by
\[
\xymatrix{
[\mu_i,\mu_{jk}]_{1\le i\le d,1\le j<k\le d,}\ar@{<->}[r]& \sum_{1\le i\le d}\mu_ix_i+\sum_{1\le j<k\le d} \mu_{jk}x_j\wedge x_k.
}
\]
By \eqref{C8:format}, $\GL(d,\pfield)$-action on $\mathbb P_\pfield^{d(d-1)/2}$ is translated into $\PGL(d,\pfield)$-action on $\mathbb P(W)$ by affine automorphisms of $\Lambda(\mathfrak h)$. If we consider those one-dimensional subspaces of $W$ as ``noncommutative'' quadratic curves in the exterior algebra, then $\mathcal H ^2(T_d)/\Aut(T_d)$ are in $1$-$1$ correspondence with isomorphism classes of quadratic curves in $\Lambda(\mathfrak h)$ by affine automorphisms. Then the bijection follows from Theorem \ref{C0:thmB} (c).
\newline

\noindent
\textbf{(a) $\Longleftrightarrow$ (c) of Theorem \ref{C0:thmD}.} Let $G$ be a $p$-group of order $p^{d+1}$ with Frattini group isomorphic to $C_p$. Then, we have the following $p$-group extension:  
\[
\xymatrix{
1\ar[r]
&C_p\ar[r]
&G\ar[r]
&C_p^d\ar[r]
& 1.
}
\]
By taking their group algebras, we get a short exact sequence of finite-dimensional local Hopf algebras:
\[
\xymatrix{
1\ar[r]
&k[C_p]\ar[r]
&k[G]\ar[r]
&k[C_p^d]\ar[r]
& 1.
}
\]

By \cite{Hoch}, we can write the dual Hopf algebra of $k[C_p^d]$ as $u(\mathfrak h)$ for some torus $\mathfrak h$ of dimension $d$. Similarly, we write $(k[C_p])^*$ as $u(\mathfrak g)$ for some one-dimensional torus $\mathfrak g$. Therefore by dualising the above sequence; see \cite[Lemma 4.1]{Byott}, we obtain a short exact sequence in the sense of \eqref{E:SES}. Moreover, the dual Hopf algebra $(k[G])^*$ is a $B$- extension over $A$, whose primitive space is isomorphic to $\mathfrak h$. Hence, it is a semisimple Hopf algebra in $\CH$. Since any semisimple connected Hopf algebra is given by the dual Hopf algebra of $k[G]$ for some $p$-group $G$, the process is reversible.

\begin{ques}
For any non-semisimple type $T$, does $\HG(T)/\Aut(T)$ always classify some geometric object?
\end{ques}

\end{document}